\documentclass[11pt]{article}
\usepackage[a4paper,margin=27mm]{geometry}
\usepackage{amsmath,amssymb,amsthm,mathtools,bm}
\usepackage[T1]{fontenc}
\usepackage{lmodern,microtype,booktabs,array}
\usepackage{xcolor,graphicx}
\usepackage[colorlinks=true,linkcolor=blue!40!black,citecolor=blue!40!black,urlcolor=blue!40!black]{hyperref}
\numberwithin{equation}{section}
\newtheorem{theorem}{Theorem}[section]
\newtheorem{proposition}[theorem]{Proposition}
\newtheorem{lemma}[theorem]{Lemma}
\newtheorem{corollary}[theorem]{Corollary}
\newtheorem{assumption}[theorem]{Assumption}
\theoremstyle{definition}
\newtheorem{definition}[theorem]{Definition}
\theoremstyle{remark}
\newtheorem{remark}[theorem]{Remark}
\newcommand{\E}{\mathbb E}
\newcommand{\Prob}{\mathbb P}
\newcommand{\R}{\mathbb R}
\newcommand{\C}{\mathbb C}
\newcommand{\Tr}{\operatorname{Tr}}

\newcommand{\diag}{\operatorname{diag}}
\newcommand{\HS}{\mathrm{HS}}
\newcommand{\op}{\mathrm{op}}

\newcommand{\TW}{\mathrm{TW}_1}
\newcommand{\dd}{\mathrm d}
\newcommand{\ii}{\mathrm i}
\newcommand{\cK}{\mathcal K}

\newcommand{\norm}[1]{\left\lVert#1\right\rVert}

\DeclareMathOperator{\supp}{supp}
\allowdisplaybreaks[2]
\title{Tracy--Widom law for sample covariance matrices under quadratic form concentration}
\author{Guangchen Chen\footnotemark[1] \qquad Xing Lyu\footnotemark[2]\\
\\
The University of Hong Kong}
\date{}
\begin{document}
\maketitle
\footnotetext[1]{Supported by Hong Kong RGC GRF 17304225, u3014790@connect.hku.hk}
\footnotetext[2]{Supported by Hong Kong RGC GRF 17304225, u35xing@connect.hku.hk}
\begin{abstract}
We study sample covariance matrices formed from vectors $q_i = \Sigma^{1/2}u_i \in \mathbb{R}^M$, $1 \leq i \leq N$, where $u_1, \ldots, u_N$ are independent, centered, isotropic random vectors whose coordinates may not be independent. We assume that the random vectors $q_i$ satisfy a concentration bound for their quadratic forms $q_i^T A q_i$. Under this assumption and a regular right edge condition on $\Sigma$, we establish edge universality and obtain the Tracy--Widom law for the largest eigenvalue of the sample covariance matrix after appropriate centering and scaling. In particular, for log-concave ensembles, our result removes the unconditionality assumption imposed in existing edge universality results~\cite{BX}, resolving the question that originally motivated our work. The concentration assumption also allows us to extend the result beyond the log-concave class.
\end{abstract}

% BEGIN REPLACEMENT SECTIONS
\section{Introduction}

\subsection{Background and matrix model.}

Let $S=(q_1,\ldots,q_N)\in\R^{M\times N}$ be a data matrix whose columns
are independent observations, not necessarily identically distributed. We assume that each column is centered with common covariance $\Sigma$. Thus
\begin{equation}\label{eq:isotropy}
    \E q_i=0, \qquad \E q_iq_i^T=\Sigma. 
\end{equation}
We are interested in the normalized covariance matrix formed by the data matrix
\begin{equation}\label{eq:model}
 H \equiv H_N:=\frac1N\sum_{i=1}^Nq_iq_i^T=XX^T,
 \qquad X=(x_1,\ldots,x_N):=N^{-1/2}S.
\end{equation}
Throughout the paper, we consider the proportional regime $M/N\equiv y\to y_\infty\in(0,\infty)\setminus\{1\}$ as $N$ tends to infinity.

The main motivation for \eqref{eq:model} arises from estimating the population covariance matrix by the sample covariance matrix in high--dimensional statistics and big data applications. In the low dimensional case, under some moment conditions, the sample covariance matrix $H$ is a consistent estimator of the population covariance matrix $\Sigma$, i.e., $\norm{H - \Sigma}_{\op} \rightarrow 0$ as $N \rightarrow \infty$ provided that $M$ is fixed. However, this is no longer the case in the high dimensional regime when $M$ is proportional to $N$. There is an extensive literature discussing this effect. In the case when each column has identity population covariance and independent entries, the Marchenko--Pastur (MP) law~\cite{MP} shows that the empirical spectral distribution converges weakly to a deterministic limit supported on \([\lambda_-^\infty,\lambda_+^\infty]\) (and possibly $ \{0\}) $, where \(\lambda_\pm^\infty=(1\pm\sqrt{y_\infty})^2\). 
The right edge \(\lambda_+^\infty\) is the threshold for the largest eigenvalue $\lambda_1(H)$, and \(\lambda_{1}(H)\to(1+\sqrt{y_\infty})^2\) almost surely under a finite fourth moment condition~\cite{YinBai}. 
Moreover, our primary focus is on the fluctuations of \(\lambda_{1}(H)\) around the edge, which have been shown to asymptotically follow the Tracy--Widom law with proper centering and scaling, for independent Gaussian entries~\cite{Johansson00, Johnstone01} and for independent entries with general distributions~\cite{PY}. In fact, based on the classical three step strategy~\cite{ErdosYau17}, the largest eigenvalue of many matrix models is proved to exhibit the Tracy--Widom fluctuations. For instance, we refer the reader to~\cite{ErdosSchleinYau2012, LeeYin2014, LandonYau2017, LeeSchnelli16, KnowlesYin2017, AltErdosKrugerSchroder2020, PillaiYin2012, BaoPanZhou2012, BaoPanZhou2015, DingYang2018, Peche09, Soshnikov02}.

For a population covariance $\Sigma$, the MP law is replaced by the deformed MP law. Its regular right edge and the square root decay rate depend on $\Sigma$ and $M/N$, see~\cite{KnowlesYin2017}.

Our first motivation was to remove the additional assumption in proving edge universality for a log-concave ensemble in~\cite{BX}. Inspired by~\cite{BaiZhou2008, PY, FanMaPaquetteWang26, MaMisiakiewicz2026}, we further consider the random vectors exhibiting a concentration of quadratic form 
\begin{equation}\label{eq:ref1}
    |q_i^T A q_i - \Tr(\Sigma A)| \prec \norm{A}_{\HS},
\end{equation}
uniformly over deterministic matrices $A \in \C^{M \times M}$. This condition \eqref{eq:ref1} is also achieved by Gaussian vectors and log-concave vectors. We refer to~\cite{FanMaPaquetteWang26, MaMisiakiewicz2026} for more examples. Our theorem uses the moment form of the concentration given in \eqref{eq:quadratic-form-moments} for convenience.

\subsection{Main results.}
We denote the largest eigenvalue of $H$ by $\lambda_1(H)$. Our main result shows that the fluctuation of the largest eigenvalue around a regular right edge is universal across columns with common covariance $\Sigma$ satisfying the following assumption. The Gaussian reference matrix has the same population covariance $\Sigma$. Its largest eigenvalue is known to follow the Tracy--Widom distribution after proper centering and scaling~\cite{KnowlesYin2017}.

Throughout the paper, our basic assumption on the random vectors is as follows.
\begin{assumption}\label{assu.1.1}
We make the following three assumptions on the vectors $q_i$.
\begin{enumerate}
\item[(i)] \textit{(On $q$)}
We assume that the random vectors $q_1,\ldots,q_N$ are independent, but not necessarily identically distributed, and satisfy
\begin{align*}
\mathbb{E}q_i=0,
\qquad
\mathbb{E}q_iq_i^T=\Sigma,
\qquad 1\le i\le N.
\end{align*}
\item[(ii)] \textit{(On $y$)}
We assume that $M/N\equiv y\longrightarrow y^\infty\in(0,\infty)\setminus\{1\}$, as $N\to\infty$.
\item[(iii)] \textit{(Concentration of quadratic forms)}
For every fixed $p\ge2$, there exists a constant $C_p>0$, uniform in $i$, $M$ and $N$, such that, for every deterministic symmetric matrix
$A\in\mathbb{C}^{M\times M}$,
\begin{equation}\label{eq:quadratic-form-moments}
    \bigl\|q_i^T A q_i-\Tr(\Sigma A) \bigr\|_{p}
    \le C_p\|A\|_{\HS},
    \qquad 1\le i\le N,
\end{equation}
where $\norm A_\HS=(\Tr A^*A)^{1/2}$.
\end{enumerate}
\end{assumption}

In particular, we do not assume that the fourth or higher moments agree between different columns. Throughout the paper, our estimates hold uniformly in the column index,
with constants depending on the common concentration constants $C_p$.

We next state the assumption on the population covariance. Let
$\sigma_1\ge\cdots\ge\sigma_M$ be the eigenvalues of $\Sigma$ and set
\begin{equation}\label{eq:population-f}
 f(m):=-\frac1m+\frac1N\sum_{\alpha=1}^M
 \frac{\sigma_\alpha}{1+\sigma_\alpha m}.
\end{equation}
There is a unique $m_+\in(-\sigma_1^{-1},0)$ satisfying $f'(m_+)=0$.
Define the right edge and its scaling parameter by
\begin{equation}\label{eq:population-edge}
 \lambda_+:=f(m_+),\qquad
 \gamma_+:=\left(\frac{2}{f''(m_+)}\right)^{1/3}.
\end{equation}

\begin{assumption}\label{assu.population}
\textit{(On $\Sigma$)}
The matrix $\Sigma=\Sigma_N$ is deterministic, real symmetric and positive definite.
For some constant $\tau>0$, independent of $N$, we assume
\begin{equation}\label{eq:population-regularity}
 \tau I_M\le\Sigma\le\tau^{-1}I_M,
 \qquad 1+\sigma_1m_+\ge\tau.
\end{equation}
\end{assumption}

The second condition in \eqref{eq:population-regularity} was first introduced in~\cite{ElKaroui2007}, and is the regularity condition at the rightmost edge in the anisotropic local law in~\cite{KnowlesYin2017}. Together with the bounds on $\Sigma$ and $y$, this condition ensures
a uniformly regular right edge separated from the other edges. 

\begin{theorem}[Universality of largest eigenvalue]\label{thm.universality}\label{universality thm}
Suppose that Assumptions~\ref{assu.1.1} and~\ref{assu.population} hold for the independent, not necessarily identically distributed vectors $q_1,\ldots,q_N$. Let
$X^w$ have independent $N(0,N^{-1})$ entries and set
$W:=\Sigma^{1/2}X^w(X^w)^T\Sigma^{1/2}$. Then, for every $x\in\R$,
\begin{align*}
 \lim_{N\to\infty}\Bigl\{
 \Prob^H\bigl(\gamma_+N^{2/3}(\lambda_1(H)-\lambda_+)\le x\bigr)
 -\Prob^W\bigl(\gamma_+N^{2/3}(\lambda_1(W)-\lambda_+)\le x\bigr)
 \Bigr\}=0.
\end{align*}
\end{theorem}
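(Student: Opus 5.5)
The plan is the standard three-step route adapted to column-dependent models, with the Gaussian matrix $W$ as reference. Since the entries of $q_i$ are not independent, entrywise Lindeberg swapping is not available. We will instead swap whole columns, i.e.\ rank-one updates $x_ix_i^T$, and control all needed moments through the quadratic-form concentration \eqref{eq:quadratic-form-moments}.

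\emph{Step 1: anisotropic local law near the edge.} For $z=E+\ii\eta$ in a spectral domain around $\lambda_+$ with $\eta\ge N^{-1+\epsilon}$, we prove that the resolvent $G(z)=(H-z)^{-1}$, and its companion $\mathcal G=(X^TX-z)^{-1}$, satisfy
\[
|\langle v,(G(z)-\Pi(z))w\rangle|\prec \sqrt{\tfrac{\operatorname{Im} m(z)}{N\eta}}+\tfrac1{N\eta}.
\]
Here $\Pi(z)=-z^{-1}(1+m(z)\Sigma)^{-1}$ is the deterministic equivalent from \cite{KnowlesYin2017}, and $m$ solves $z=f(m)$. The method uses the Schur complement/Sherman--Morrison formula in the column index,
\[
\mathcal G_{ii}=\frac{-1}{z\,(1+x_i^TG^{(i)}x_i)},
\]
together with a self-consistent equation for the averaged quantity $N^{-1}\Tr\,\Sigma G$. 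The only input about $q_i$ is that the fluctuation $x_i^TG^{(i)}x_i-N^{-1}\Tr\Sigma G^{(i)}$ is bounded with high probability by $N^{-1}\|G^{(i)}\|_\HS$; this is exactly \eqref{eq:quadratic-form-moments} applied conditionally on the other columns. The Ward identity $\|G^{(i)}\|_\HS^2=\eta^{-1}\operatorname{Im}\Tr G^{(i)}$ then yields the usual error parameter. Stability of the self-consistent equation near $\lambda_+$ is given by the regularity condition in Assumption~\ref{assu.population}, exactly as in \cite{KnowlesYin2017}. Anisotropic (generalized-entry) bounds need bilinear forms $v^TG^{(i)}x_i$. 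We get them by polarization from quadratic forms with rank-two matrices $A$, and from large deviation of $x_i^TB\,x_i$ with $B$ built from $G^{(i)}vv^TG^{(i)}$. Consequences are rigidity, $|\lambda_1(H)-\lambda_+|\prec N^{-2/3}$, and the absence of eigenvalues beyond $\lambda_++N^{-2/3+\epsilon}$.

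\emph{Step 2: reduction to resolvent functionals.} As in \cite{PY, ErdosYau17}, for $E$ near $\lambda_+ + xN^{-2/3}$ it suffices to compare
\[
\E F\Bigl(N\int_{E}^{E_+}\operatorname{Im} m_N(y+\ii\eta)\,\dd y\Bigr)
\]
between $H$ and $W$, where $\eta=N^{-2/3-\epsilon}$, $E_+=\lambda_++N^{-2/3+\epsilon}$, $m_N=N^{-1}\Tr G$, and $F$ is a smooth cutoff. This reduction uses only Step 1.

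\emph{Step 3: Green function comparison by column replacement.} Define interpolating matrices $H^{(\gamma)}$, whose first $\gamma$ columns are Gaussian $\Sigma^{1/2}g_i/\sqrt N$ and whose remaining columns are the $x_i$. Expand the difference in a single column around $G^{(i)}$ via the resolvent (Woodbury) expansion in $x_ix_i^T$, to fourth order. The zeroth and first order terms, and the second order terms involving only $\E x_ix_i^T=\Sigma/N$, agree for the two ensembles. The key point is that the higher-order terms involve $x_i^TAx_i$, where $A$ is a product of $G^{(i)}$ factors independent of column $i$. These split as $N^{-1}\Tr\Sigma A$ plus a fluctuation. The trace parts contribute identically for both ensembles. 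Hence the discrepancy comes only from mixed moments of centered fluctuations $x_i^TA_jx_i-N^{-1}\Tr\Sigma A_j$. Each fluctuation is $O_\prec(N^{-1}\|A_j\|_\HS)$, and near the edge Step 1 gives $\|A_j\|_\HS\lesssim N^{1/3+C\epsilon}$.

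The \textbf{main obstacle} is that the naive bound on the per-column error (a product of two fluctuations, roughly $N^{-4/3}$) is not summable over $N$ columns; we need $o(N^{-1})$ per column. So we must exploit structure rather than size alone. In the second-order term, the difference of covariances of quadratic forms between $q_i$ and its Gaussian counterpart is what matters. I plan to show this difference is bounded by something like $N^{-2}\sum_a |A_{aa}|^2$ in a suitable basis, and then use the anisotropic local law: diagonal entries of the edge-localized $A$ are deterministic to leading order with small fluctuations. This gives an extra factor $N^{-1/3+C\epsilon}$. I also expect to need an extra averaging over $y$ in the integral to gain the $\eta$ factor. If this direct approach fails, the fallback is a continuous interpolation $\sqrt t\,x_i+\sqrt{1-t}\,g_i$ combined with Dyson Brownian motion short-time universality: first establish edge universality for $H+\sqrt t\,\Sigma^{1/2}$-Gaussian with $t=N^{-1/3+\epsilon}$ via the local law, and then compare over the short time using only second-moment matching, which holds exactly.
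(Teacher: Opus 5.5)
Your main line, a direct column-by-column swap of $H$ against $W$, does not close, and an error in the scales hides the reason. At $\eta_0=N^{-2/3-\epsilon}$, rigidity and the Ward identity make $\|R\|_{\HS}$ and $\|R^2\|_{\HS}$ of order $N^{2/3}$ and $N^{4/3}$, up to $N^{C\epsilon}$ (cf.\ Lemma~\ref{lem:res}), not $N^{1/3}$. After second-moment matching, the first surviving discrepancy in the swap of column $i$ is $\eta_0N^{-2}\,\E\bigl[F'(\mu_i)\,\Im\bigl(d^{-2}K^0_i[R^2,R]\bigr)\bigr]$, together with the analogous $\eta_0^2N^{-2}K^0_i[R^2,R^2]$ term. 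Here $K^0_i[A,B]=\sum_{abcd}\kappa_4(q_i)_{abcd}A_{ab}B_{cd}$ vanishes for the Gaussian reference but not for $q_i$, and it is controlled only by $|K^0_i[A,B]|\le C\|A\|_{\HS}\|B\|_{\HS}$. That gives $N^{-2/3}$ per column, hence $N^{1/3}$ after summation. (The $N^{-4/3}$ you quote would in fact already be summable.) Your remedy is a bound by $\sum_a|A_{aa}|^2$ in a suitable basis plus local-law control of diagonal entries. This presupposes the independent-entry structure $\kappa_4(q)_{abcd}=\kappa_a\mathbf 1_{\{a=b=c=d\}}$. Under \eqref{eq:quadratic-form-moments} alone, $\kappa_4(q_i)$ is a general fully symmetric tensor with no diagonalizing basis. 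Off-diagonal resolvent entries then enter with full weight, and the local law gives nothing beyond the Hilbert--Schmidt bound. Even the gain the paper does obtain, a factor $N^{-1/6}$, would leave $N^{-5/6}$ per column in a direct swap.

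Your fallback has the paper's architecture: Dyson Brownian motion for a Gaussian-divisible ensemble, followed by a short-time column comparison. However, the claim that the comparison needs ``only second-moment matching'' is exactly where the work lies. With $q_i(t_0)=S_{t_0}q_i+\sqrt{t_0}\,g_i$ and $S_t=(I_M-t\Sigma^{-1})^{1/2}$, the fourth cumulants differ by $(\mathrm{Id}-S_{t_0}^{\otimes 4})\kappa_4(q_i)=O(t_0)$ in the Hilbert--Schmidt sense (Lemma~\ref{lem.K-HS-bound}). The naive per-column bound is therefore $t_0\eta_0N^{-2}\|R^2\|_{\HS}\|R\|_{\HS}\approx N^{-1+C\epsilon}$, which is borderline and not summable. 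The missing ingredient is Lemma~\ref{lem.weighted-cumulant}, a self-consistent equation for $\E[W_\gamma K_\gamma[R_1,R_2]]$, built in these steps. First write $z_1R_1=H_\gamma^{(\gamma)}R_1-I$ and multiply by $D=-\Pi(z_1)$. Then expand $H_\gamma^{(\gamma)}R_1$ over a second column $\ell$ by Sherman--Morrison. Next show that the quartic terms satisfy $\E_\ell V_\ell=O(N^{C\epsilon})$; this uses $\Tr|R|^{3/2}\prec N^{1+C\epsilon}$ for the mixed-cumulant contraction. Reduce the quadratic terms to $d_0(z_1)^{-1}N^{-1}K_\gamma[D\Sigma R_1,R_2]$, and close the equation with $z_1D-d_0(z_1)^{-1}D\Sigma=I$. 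This improves the bound from $N^{4/3}$ to $N^{7/6}$. Cauchy's formula transfers the improvement to $K_\gamma[R_1^2,R_2]$ and $K_\gamma[R_1^2,R_2^2]$, giving $N^{-7/6}$ per column. The sixth-moment terms additionally need the $O(t)$ bound of Lemma~\ref{lem.centered-triple}. Two smaller points remain. The Gaussian increment must be isotropic, i.e.\ $\sqrt t\,X^w$ with the signal shrunk by $S_t$; adding $\sqrt t\,\Sigma^{1/2}$-Gaussian noise is not a rectangular Dyson Brownian motion to which the Ding--Yang edge result applies conditionally on $S$. You must also show that the random edge and scaling of the resulting free convolution match $\lambda_+$ and $\gamma_+$ to $o(N^{-2/3})$ and $o(1)$ respectively.
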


It is known that the largest eigenvalue of the Gaussian sample covariance matrix with a regular right edge has the Tracy--Widom limit~\cite[Corollary~3.19(i)]{KnowlesYin2017}. Combining this result with Theorem~\ref{universality thm}, we have the following corollary.
\begin{corollary}\label{cor.universality}
Under the same conditions as Theorem~\ref{universality thm}, we have
\begin{align*}
 \gamma_+N^{2/3}(\lambda_1(H)-\lambda_+)\ \longrightarrow\ \mathrm{TW}_1
\end{align*}
in distribution, where $\mathrm{TW}_1$ denotes the Tracy--Widom distribution.
\end{corollary}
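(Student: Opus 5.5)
The corollary follows by combining Theorem~\ref{universality thm} with the Gaussian Tracy--Widom result of~\cite[Corollary~3.19(i)]{KnowlesYin2017}. Write
\[
F_N^H(x):=\Prob^H\bigl(\gamma_+N^{2/3}(\lambda_1(H)-\lambda_+)\le x\bigr),\qquad F_N^W(x):=\Prob^W\bigl(\gamma_+N^{2/3}(\lambda_1(W)-\lambda_+)\le x\bigr),
\]
and let $F_1$ denote the distribution function of $\mathrm{TW}_1$. Convergence in distribution means $F_N^H(x)\to F_1(x)$ at every continuity point of $F_1$. Since $F_1$ is continuous on all of $\R$, this is required for every $x\in\R$.

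The first step is to check that the reference matrix $W=\Sigma^{1/2}X^w(X^w)^T\Sigma^{1/2}$ falls under~\cite[Corollary~3.19(i)]{KnowlesYin2017}. That result requires independent Gaussian entries of variance $N^{-1}$ and $M/N\to y_\infty\in(0,\infty)$. It also requires $\Sigma$ to be bounded above and below, with the right edge regular, i.e.\ $1+\sigma_1 m_+\ge\tau$. The Gaussian matrix $X^w$ supplies the first condition, Assumption~\ref{assu.1.1}(ii) supplies the second, and Assumption~\ref{assu.population} supplies the third. Its centering $\lambda_+$ and scaling $\gamma_+$ are the quantities defined by \eqref{eq:population-f}--\eqref{eq:population-edge}, since both are built from the same function $f$. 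The conclusion is $F_N^W(x)\to F_1(x)$ for every $x$.

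The second step fixes $x\in\R$ and uses the triangle inequality:
\[
|F_N^H(x)-F_1(x)|\le |F_N^H(x)-F_N^W(x)|+|F_N^W(x)-F_1(x)|.
\]
The first term tends to zero by Theorem~\ref{universality thm}, which applies pointwise in $x$ under exactly the present hypotheses. The second term tends to zero by the first step. Hence $F_N^H(x)\to F_1(x)$ for all $x$, which is the claimed convergence in distribution.

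The only point that needs care is the matching of conventions in the first step. The normalization of $f$ in~\cite{KnowlesYin2017}, its sign convention for $m$, and its definition of $\gamma_+$ must agree with \eqref{eq:population-f}--\eqref{eq:population-edge}, possibly after trivial rescaling. Otherwise the centering and scaling in the Gaussian result would differ from those in Theorem~\ref{universality thm}. Once these definitions are aligned, the argument is immediate.
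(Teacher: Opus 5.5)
Your proposal is correct and follows the paper's argument: the corollary is obtained by combining Theorem~\ref{universality thm} with the Gaussian Tracy--Widom limit of \cite[Corollary~3.19(i)]{KnowlesYin2017}, and your triangle-inequality step at each fixed $x$ (using continuity of the $\mathrm{TW}_1$ distribution function) simply spells out that combination. The check you flag, that $\lambda_+$ and $\gamma_+$ match the centering and scaling in \cite{KnowlesYin2017}, is also taken for granted in the paper.
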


\begin{remark}
When $\Sigma=I_M$, we have
\[
 m_+=-\frac1{1+\sqrt y},\qquad
 \lambda_+=(1+\sqrt y)^2,\qquad
 \gamma_+=\frac{y^{1/6}}{(1+\sqrt y)^{4/3}}.
\]
The normalization in Corollary~\ref{cor.universality} is therefore
\[
 \frac{N\lambda_1(H)-(\sqrt M+\sqrt N)^2}
 {(\sqrt M+\sqrt N)(1/\sqrt M+1/\sqrt N)^{1/3}}.
\]
The formulation above does not require the empirical eigenvalue distribution of $\Sigma_N$ to converge.
\end{remark}

\subsection{Proof strategy.}
Traditional proofs of edge universality follow the three step strategy developed by the Erd\H{o}s--Yau school, including local law, dynamical approach, and Green function comparison. We refer to the monograph~\cite{ErdosYau17} for a standard structure of the proof. The recent results of~\cite{FanMaPaquetteWang26,MaMisiakiewicz2026} provide the local law and spectral rigidity for the present population model under quadratic form concentration.

Following the approach of~\cite{PY,BX}, we consider a small Gaussian perturbation. Let $X^w$ be an $M\times N$ matrix with independent $N(0,N^{-1})$ entries, independent of $S=(q_1,\ldots,q_N)$. For $0\le t\le\tau/2$, set
\begin{align}\label{eq.Guassian interpolation}
 X(t):=N^{-1/2}(I_M-t\Sigma^{-1})^{1/2}S+\sqrt t\,X^w,
 \qquad H(t):=X(t)X(t)^T.
\end{align}
In particular, $X(0)=X$ and each column of $X(t)$ has covariance $\Sigma/N$.
Let $t_0=N^{-1/3+\varepsilon}$ for a sufficiently small constant $\varepsilon>0$, and write $\widetilde X=X(t_0)$ and $\widetilde H=H(t_0)$.
Conditionally on $S$, this is an additive Gaussian perturbation of
$N^{-1/2}(I_M-t_0\Sigma^{-1})^{1/2}S$. The regularity check and the edge analysis in~\cite{DY} therefore apply. We verify that the random edge and scaling obtained from this perturbation agree with $\lambda_+$ and $\gamma_+$ at the required precision.

To complete the proof, it remains to conduct Green function comparison between $H$ and $\widetilde H$. We replace one column of $X$ by the corresponding column of $\widetilde X$ at each step. 

Using the convention in \cite{PY}, and with the crude bound from the assumption, we can express the comparison result by the difference of two products of quadratic forms
\begin{equation}\label{1}
    \E\left[\left(x^T \mathcal{G}(z_1) x\right)\cdot \left(x^T \mathcal{G}(z_2) x\right)\right] 
-
\E\left[\left(\tilde{x}^T \mathcal{G}(z_1) \tilde{x}\right)\cdot \left(\tilde{x}^T \mathcal{G}(z_2) \tilde{x}\right)\right] 
\end{equation}
where $\mathcal{G}(z_1)$ and $\mathcal{G}(z_2)$ are two resolvents of the interpolated data matrix with two spectral parameters $z_1$ and $z_2$.
The major obstacle is that we cannot get a desired bound for this term using merely the typical size of the resolvents obtained from the local law. In order to get a more precise estimate of this term, the resolvent matrix in the quadratic form shall be further expanded. This is the main technical task of our paper. After careful error analysis, we finally construct a self consistent equation for \eqref{1}. By solving this equation, we succeed in controlling \eqref{1} to our desired scale and complete the proof of edge universality. 

\subsection{Notations and conventions.}
Regarding different norms, for a vector $u$, $|u|$ denotes the Euclidean norm. For a matrix $A$, $\norm A_\op$ is its operator norm and $\norm A_\HS=(\Tr A^*A)^{1/2}$ its Hilbert-Schmidt norm. For a random scalar $Z$, we write $\norm Z_p=(\E|Z|^p)^{1/p}$. 

For two positive quantities $\mathcal{X}$ and $\mathcal{Y}$,
we write $\mathcal{X}\lesssim\mathcal{Y}$ if there exists a
constant $C>0$
such that $\mathcal{X}\le C\mathcal{Y}$.
Throughout the paper, we also use the notation of stochastic domination introduced in the monograph~\cite{ErdosYau17}.

\noindent
\begin{definition}[Stochastic Domination]
\label{stochastic domination}

    Let
\[
\mathsf{X}
=
\left(
\mathsf{X}^{(N)}(u):
N\in\mathbb{N},\ u\in\mathsf{U}^{(N)}
\right),
\qquad
\mathsf{Y}
=
\left(
\mathsf{Y}^{(N)}(u):
N\in\mathbb{N},\ u\in\mathsf{U}^{(N)}
\right)
\]
be two families of random variables, where $\mathsf{Y}$ is
nonnegative, and $\mathsf{U}^{(N)}$ is a possibly
$N$-dependent parameter set.

We say that $\mathsf{X}$ is stochastically dominated by
$\mathsf{Y}$, uniformly in $u$, if for all small $\epsilon>0$
and large $D>0$,
\[
\sup_{u\in\mathsf{U}^{(N)}}
\mathbb{P}\left(
\left|\mathsf{X}^{(N)}(u)\right|
>
N^\epsilon\mathsf{Y}^{(N)}(u)
\right)
\le N^{-D}
\]
for large enough $N>N_0(\epsilon,D)$.
If $\mathsf{X}$ is stochastically dominated by $\mathsf{Y}$,
uniformly in $u$, we use the notation
$\mathsf{X}\prec\mathsf{Y}$, or equivalently
$\mathsf{X}=O_{\prec}(\mathsf{Y})$.
Note that in the special case when $\mathsf{X}$ and
$\mathsf{Y}$ are deterministic, $\mathsf{X}\prec\mathsf{Y}$
means that for any given $\epsilon>0$,
\[
\left|\mathsf{X}^{(N)}(u)\right|
\le N^\epsilon\mathsf{Y}^{(N)}(u)
\]
uniformly in $u$, for all sufficiently large
$N\ge N_0(\epsilon)$.
\end{definition}

\subsection{Organization.}
The paper is organized as follows. Section~\ref{sec.preliminary} is devoted to the preliminaries, which are crucial for our later discussion.
In Section~\ref{sec.main-proof}, we prove
Theorem~\ref{thm.universality} based on Green function comparison.
Section~\ref{sec.weighted-cumulant} establishes the estimates
needed for Green function comparison by deriving and solving a self-consistent equation.
Finally, the proofs of several auxiliary estimates are relegated to the appendices.

\section{Preliminaries}
\label{sec.preliminary}

\subsection{Settings.}
Besides $H=XX^T$, we use the companion matrix $\mathcal H=X^TX$.
For $z=E+\ii\eta\in\C\setminus\R$, define
\begin{align*}
 \mathcal{G}_1(z)&=(H-zI_M)^{-1},&
 \mathcal{G}_2(z)&=(\mathcal H-zI_N)^{-1},\\
 m_{1N}(z)&=M^{-1}\Tr\mathcal{G}_1(z),&
 m_{2N}(z)&=N^{-1}\Tr\mathcal{G}_2(z).
\end{align*}
The two matrices have the same nonzero eigenvalues. Their empirical
eigenvalue laws differ only in the number of zero eigenvalues and in
the normalization of the trace. In particular,
\[
 m_{2N}(z)=ym_{1N}(z)+\frac{y-1}{z}.
\]
The deterministic law corresponding to $H$ is denoted as $\nu_{y,1}$,
and for $\mathcal H$ it is $\nu_{y,2}$. These are the deformed
Marchenko--Pastur laws associated with $\Sigma$ and $y$; we suppress
the dependence on $\Sigma$ in the notation. Their Stieltjes transforms
are denoted by $m_1(z)$ and $m_2(z)$. With the function $f$ defined in
Section~1, $m_2$ is the unique solution in $\C^+$ of $z=f(m_2(z))$.
The deterministic matrix corresponding to $\mathcal G_1$ is
\[
 \Pi(z):=-\bigl[z(I_M+m_2(z)\Sigma)\bigr]^{-1}.
\]
Thus
\begin{equation}\label{eq:m_1}
 \begin{split}
 z&=-\frac1{m_2(z)}
       +\frac1N\sum_{\alpha=1}^M
          \frac{\sigma_\alpha}{1+\sigma_\alpha m_2(z)},\\
 m_1(z)&=\frac1M\Tr\Pi(z),\qquad
 m_2(z)=ym_1(z)+\frac{y-1}{z}.
 \end{split}
\end{equation}
Here $\sigma_1,\ldots,\sigma_M$ are the eigenvalues of $\Sigma$.
The transforms in the lower half-plane are defined by complex
conjugation. We will also use the deterministic quantities
\begin{gather}
 a_0(z):=\frac1N\Tr\bigl(\Sigma\Pi(z)\bigr),\qquad
 d_0(z):=1+a_0(z)=-\frac1{zm_2(z)},
 \label{eq:population-denominator}\\
 zI_M-\frac{\Sigma}{d_0(z)}=-\Pi(z)^{-1}.
 \label{eq:population-stability}
\end{gather}
The last two identities follow directly from \eqref{eq:m_1}.

Assumption~\ref{assu.population} implies that $\lambda_+$ is a regular
right edge in the sense of \cite[Definition~2.7]{KnowlesYin2017}.
In particular, for some fixed $\delta>0$, the interval
$[\lambda_+-2\delta,\lambda_+]$ belongs to the rightmost component
of the support, $\lambda_+-2\delta>0$, and
\[
 \frac{\dd\nu_{y,2}}{\dd E}(E)\asymp\sqrt{\lambda_+-E},
 \qquad \lambda_+-2\delta\le E<\lambda_+.
\]
The same estimate holds for $\nu_{y,1}$ since their densities away
from zero differ by the factor $y$. We choose $\delta$ small enough
throughout the paper. For $|E-\lambda_+|\le\delta$ and $0<\eta\le1$,
the deterministic estimates from \cite[Appendix~A]{KnowlesYin2017}
give
\begin{align}\label{eq:regular-edge-bounds}
 |m_2(z)|\asymp1,\qquad |d_0(z)|\asymp1,\qquad
 \|\Pi(z)\|_\op+\|\Pi(z)^{-1}\|_\op\lesssim1.
\end{align}
Moreover, with $\kappa:=|E-\lambda_+|$,
\begin{align}\label{eq:regular-edge-density}
 \Im m_2(z)\asymp
 \begin{cases}
  \sqrt{\kappa+\eta},&E\le\lambda_+,\\
  \eta/\sqrt{\kappa+\eta},&E\ge\lambda_+.
 \end{cases}
\end{align}

For a set $\mathbb T\subset\{1,\ldots,N\}$, let
$X^{(\mathbb T)}$ be the matrix obtained by deleting the columns
indexed by $\mathbb T$. The quantities
$\mathcal G_1^{(\mathbb T)}(z)$,
$\mathcal G_2^{(\mathbb T)}(z)$, etc. are defined similarly using
$X^{(\mathbb T)}$. We also set
\[
 m_1^{(\mathbb T)}(z):=\frac1M\Tr\mathcal G_1^{(\mathbb T)}(z).
\]

\subsection{Local law and rigidity.}
The following estimates are obtained from the results of
\cite{FanMaPaquetteWang26,MaMisiakiewicz2026}. Under Assumption~\ref{assu.1.1} and Assumption~\ref{assu.population}, our model satisfies Assumptions~1 and~2 of both papers \cite{FanMaPaquetteWang26,MaMisiakiewicz2026}, including the concentration condition for quadratic forms. 
Moreover, the spectral domain near the right edge is regular by \cite[Lemma~2.4(b)]{FanMaPaquetteWang26}. Therefore, the local law and rigidity
estimates stated below are consequences of
\cite[Theorem~2.5 and Remark~2.6]{FanMaPaquetteWang26} and
\cite[Corollary~2.7]{FanMaPaquetteWang26}, respectively.
Set the domain
\begin{align}\label{eq.domain}
 D(\epsilon):=\{z=E+\ii\eta\in\C:
 |E-\lambda_+|\le\delta,\quad
 N^{-1+\epsilon}\le\eta\le1\},
\end{align}
and define
\[
 \Psi(z):=\sqrt{\frac{\Im m_2(z)}{N\eta}}+\frac1{N\eta}.
\]

\begin{theorem}[Local MP law]
\label{local MP law}
Under Assumption~\ref{assu.1.1} and Assumption~\ref{assu.population},
uniformly on $D(\epsilon)$, we have 
\begin{enumerate}
\item[(i)]
The Stieltjes transforms of the empirical spectral distributions
satisfy
\begin{align}\label{trace gap}
 |m_{1N}(z)-m_1(z)|+|m_{2N}(z)-m_2(z)|
 \prec\frac{\Psi(z)^2}{\sqrt{\kappa+\eta}+\Psi(z)}
 \lesssim\frac1{N\eta}.
\end{align}
\item[(ii)]
The individual matrix elements of the Green function
satisfy
\begin{align}\label{entry wise local law}
 \bigl|[\mathcal G_2(z)]_{ij}-m_2(z)\delta_{ij}\bigr|
 \prec\Psi(z).
\end{align}
\item[(iii)]
For any deterministic unit vectors $u,v\in\C^M$, we have
\begin{align}\label{anisotropic local law}
 \bigl|u^*\bigl(\mathcal G_1(z)-\Pi(z)\bigr)v\bigr|
 \prec\Psi(z).
\end{align}
In particular,
\begin{align}\label{weighted trace local law}
 \left|\frac1N\Tr\bigl(\Sigma\mathcal G_1(z)\bigr)-a_0(z)\right|
 \prec\Psi(z).
\end{align}
\end{enumerate}
\end{theorem}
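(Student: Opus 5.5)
This theorem is imported rather than proved from scratch, so the plan is mainly a careful verification that the hypotheses of \cite{FanMaPaquetteWang26,MaMisiakiewicz2026} hold, followed by translating their conclusions into the form stated here.

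\textbf{Step 1: hypotheses of the cited works.} I would write $q_i=\Sigma^{1/2}u_i$ with $u_i=\Sigma^{-1/2}q_i$. These vectors are centered and isotropic by Assumption~\ref{assu.1.1}(i). Their quadratic forms concentrate, since
\[
 u_i^TBu_i-\Tr B=q_i^T\Sigma^{-1/2}B\Sigma^{-1/2}q_i-\Tr(\Sigma\,\Sigma^{-1/2}B\Sigma^{-1/2}),
\]
and $\|\Sigma^{-1/2}B\Sigma^{-1/2}\|_\HS\le\tau^{-1}\|B\|_\HS$ by \eqref{eq:population-regularity}. Combined with Markov's inequality, the moment bound \eqref{eq:quadratic-form-moments} for all $p$ gives the high-probability form $|u_i^TBu_i-\Tr B|\prec\|B\|_\HS$. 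This is the form \eqref{eq:ref1} used in those papers. The bounds $\tau\le\Sigma\le\tau^{-1}$ and $y\to y_\infty\ne1$ supply their remaining structural assumptions.

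\textbf{Step 2: regularity of the edge domain.} Next I would check that the edge condition $1+\sigma_1m_+\ge\tau$ makes $D(\epsilon)$ a regular spectral domain in their sense, citing \cite[Lemma~2.4(b)]{FanMaPaquetteWang26}. The deterministic inputs \eqref{eq:regular-edge-bounds} and \eqref{eq:regular-edge-density} come from \cite[Appendix~A]{KnowlesYin2017}.

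\textbf{Step 3: reading off the three parts.} I would then apply \cite[Theorem~2.5, Remark~2.6]{FanMaPaquetteWang26} to obtain the anisotropic law (iii) for $\mathcal G_1-\Pi$, the entrywise law (ii) for $\mathcal G_2$, and the averaged law (i) with error $\Psi^2/(\sqrt{\kappa+\eta}+\Psi)$.

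\begin{itemize}
\item \emph{Part (i).} The averaged bound for $m_{2N}$ follows from the one for $m_{1N}$ via the identities $m_{2N}=ym_{1N}+(y-1)/z$ and $m_2=ym_1+(y-1)/z$, with $|z|\asymp1$ on $D(\epsilon)$. For the final inequality $\lesssim1/(N\eta)$: if $\Psi\lesssim1/(N\eta)$, use $\Psi^2/\Psi=\Psi$. Otherwise $\Psi^2\lesssim\Im m_2/(N\eta)\lesssim\sqrt{\kappa+\eta}/(N\eta)$ by \eqref{eq:regular-edge-density}, and dividing by $\sqrt{\kappa+\eta}$ gives $1/(N\eta)$.
\item \emph{Part (iii), weighted trace.} Write $\Sigma=\sum_\alpha\sigma_\alpha v_\alpha v_\alpha^T$. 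Then $N^{-1}\Tr(\Sigma(\mathcal G_1-\Pi))$ is an average of $M$ diagonal terms $\sigma_\alpha v_\alpha^*(\mathcal G_1-\Pi)v_\alpha$, each $\prec\Psi$, uniformly in $\alpha$. A union bound over the $M$ vectors costs only a factor $N^{\epsilon}$ in probability, which the definition of $\prec$ absorbs. Hence the average is $\prec\Psi$. (Using the averaged law would give a better bound, but $\Psi$ suffices.)
\end{itemize}

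\textbf{Main obstacle.} The difficulty is matching conventions rather than any analysis. I expect three points to need care:
\begin{itemize}
\item the cited works may normalize $X$ or state the law for the resolvent of $X^TX$ or a linearized matrix rather than for $\mathcal G_1$;
\item their deterministic equivalent should coincide with $\Pi(z)=-[z(I+m_2\Sigma)]^{-1}$;
\item their error parameter should reduce to $\Psi$ on $D(\epsilon)$.
\end{itemize}
If their resolvent is the linearization, I would extract the blocks by Schur complement. In that case the $\mathcal G_2$ entries are obtained either directly or via $\mathcal G_2=-z^{-1}(I-X^T\mathcal G_1X)^{-1}$-type identities.
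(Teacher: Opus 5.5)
Your plan matches the paper's: the paper also just checks that Assumptions~\ref{assu.1.1} and~\ref{assu.population} give the hypotheses of \cite{FanMaPaquetteWang26,MaMisiakiewicz2026}, with edge regularity taken from \cite[Lemma~2.4(b)]{FanMaPaquetteWang26}, imports (i)--(ii) from \cite[Theorem~2.5, Remark~2.6]{FanMaPaquetteWang26}, and gets the weighted-trace bound by summing the anisotropic estimate over an eigenbasis of $\Sigma$, exactly as you do. There are two small differences: the paper takes the anisotropic law (iii) from \cite[Theorem~2.4]{MaMisiakiewicz2026} rather than from \cite{FanMaPaquetteWang26}, and your union bound over the $M$ eigenvectors costs a factor $M$ (not $N^{\epsilon}$) in probability, which is harmless because $D$ in Definition~\ref{stochastic domination} is arbitrary.
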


\begin{proof}
The first two statements are
\cite[Theorem~2.5 and Remark~2.6]{FanMaPaquetteWang26}.
The anisotropic estimate is
\cite[Theorem~2.4]{MaMisiakiewicz2026}.
To obtain \eqref{weighted trace local law}, apply
\eqref{anisotropic local law} to an orthonormal eigenbasis of
$\Sigma$, sum the resulting diagonal estimates, and use
$M\asymp N$ and $\|\Sigma\|_\op\lesssim1$.
\end{proof}

Let the classical locations of the eigenvalues $\gamma_j$
be defined as follows:
\begin{align}\label{def of gammaj}
 \int_{\gamma_j}^{\lambda_+}\nu_{y,1}(\dd x)
 =\frac{j-1/2}{M},\qquad 1\le j\le M\wedge N.
\end{align}

\begin{theorem}[Spectral rigidity]\label{rigidity}
Under Assumption~\ref{assu.1.1} and Assumption~\ref{assu.population},
there is a fixed constant $\delta_1>0$ such that, uniformly for
$1\le j\le\delta_1N$,
\begin{align}\label{eigenvalue rigidity}
 |\lambda_j(H)-\gamma_j|\prec N^{-2/3}j^{-1/3}.
\end{align}
Moreover, we have
\begin{align}\label{eq:right-edge-confinement}
 |\lambda_1(H)-\lambda_+|\prec N^{-2/3},
 \qquad \|H\|_\op\prec1.
\end{align}
For every fixed $b>0$, with overwhelming probability all eigenvalues
of $H$ belong to the $b$-neighborhood of $\supp\nu_{y,1}$.
\end{theorem}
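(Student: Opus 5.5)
The plan is to reduce Theorem~\ref{rigidity} to the local laws of Theorem~\ref{local MP law} together with the input already quoted from \cite[Corollary~2.7]{FanMaPaquetteWang26}. Our model satisfies the hypotheses of that work, as noted before Theorem~\ref{local MP law}, and the right edge is regular by \cite[Lemma~2.4(b)]{FanMaPaquetteWang26}. So I would first import from \cite{FanMaPaquetteWang26} the two global statements: with overwhelming probability there are no eigenvalues of $H$ outside the $b$-neighborhood of $\supp\nu_{y,1}$, and $\|H\|_\op\prec1$. These cannot be obtained from the local law on $D(\epsilon)$ alone, since that law only sees a $\delta$-window around $\lambda_+$. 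A crude bound such as $\|H\|_\op\le\sum_i|x_i|^2$ is also useless, since it only gives $\|H\|_\op\lesssim M$. Everything else is the standard argument of \cite{ErdosYau17}, which I would run as follows.

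\textbf{Step 1 (edge confinement).} Fix small $\epsilon>0$ and take $E=\lambda_++\kappa$ with $N^{-2/3+4\epsilon}\le\kappa\le\delta$. Set $\eta:=N^{-1/2-\epsilon}\kappa^{1/4}$, so that $\eta\ll\kappa$ and $N\eta\sqrt\kappa\ge N^{\epsilon}$. Outside the spectrum, \eqref{eq:regular-edge-density} gives $\Im m_2\asymp\eta/\sqrt\kappa$. Inserting this into the improved bound \eqref{trace gap} gives
\[
|m_{1N}-m_1|\prec\frac{1}{N\eta}\cdot\frac{1}{N\eta\sqrt\kappa}+\frac{1}{N^2\eta^2\sqrt\kappa}\ll\frac{1}{N\eta}.
\]
On the other hand, an eigenvalue in $[E-\eta,E+\eta]$ would force $\Im m_{1N}(E+\ii\eta)\gtrsim(N\eta)^{-1}$. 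Since $\Im m_1\asymp\eta/\sqrt\kappa\ll(N\eta)^{-1}$ in this range, this contradicts the bound above. A union bound over an $N^{-3}$-net of such $E$, together with the $b$-neighborhood statement for $E>\lambda_++\delta$, shows that $\lambda_1(H)\le\lambda_++N^{-2/3+4\epsilon}$ with overwhelming probability.

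\textbf{Step 2 (counting function).} Let $\mathfrak n(E):=M^{-1}\#\{j:\lambda_j(H)\ge E\}$ and $n(E):=\nu_{y,1}([E,\infty))$. I would apply the Helffer--Sjöstrand formula to a smoothed indicator of $[E,\lambda_++2\delta]$. The test function is smoothed at scale $\eta_0=N^{-1+\epsilon}$, or at scale comparable to the distance to the edge when that is larger. The resulting integral is controlled by \eqref{trace gap} on $D(\epsilon)$, by Step~1, and by the absence of eigenvalues far from the support. This yields, uniformly for $|E-\lambda_+|\le\delta$,
\[
|\mathfrak n(E)-n(E)|\prec N^{-1}.
\]

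\textbf{Step 3 (rigidity).} Near the edge we have $n(E)\asymp(\lambda_+-E)_+^{3/2}$, so $\gamma_j$ satisfies $\lambda_+-\gamma_j\asymp(j/N)^{2/3}$ and $|n'(\gamma_j)|\asymp(j/N)^{1/3}$. Inverting the counting estimate from Step~2 through this square-root density gives $|\lambda_j-\gamma_j|\prec N^{-1}(j/N)^{-1/3}=N^{-2/3}j^{-1/3}$ for $1\le j\le\delta_1N$, with $\delta_1$ chosen so that $\gamma_{\delta_1N}\ge\lambda_+-\delta$. The case $j=1$ combined with Step~1 gives $|\lambda_1-\lambda_+|\prec N^{-2/3}$.

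\textbf{Main obstacle.} The delicate point is Step~1, which is the only place where the strong form $\Psi^2/(\sqrt{\kappa+\eta}+\Psi)$ of \eqref{trace gap} is genuinely needed; the cruder bound $1/(N\eta)$ would not exclude outliers. I would therefore check that the cited \cite[Theorem~2.5]{FanMaPaquetteWang26} provides this bound uniformly for $E>\lambda_+$ down to $\eta=N^{-1+\epsilon}$. If it does, the rest is routine, and the whole theorem coincides with \cite[Corollary~2.7]{FanMaPaquetteWang26}, which is how I would ultimately cite it.
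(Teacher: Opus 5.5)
Your proposal is correct, and its final move (citing \cite[Corollary~2.7]{FanMaPaquetteWang26}) is exactly the paper's proof. The paper's argument is that citation plus the same bookkeeping you do in Step~3: it chooses $\delta_1$ so that $\gamma_j\in[\lambda_+-\delta,\lambda_+]$ for $j\le\delta_1N$, and it reads off $|\lambda_1(H)-\lambda_+|\prec N^{-2/3}$ from the case $j=1$ via $\lambda_+-\gamma_1\asymp N^{-2/3}$.

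The difference is that you rederive the corollary from Theorem~\ref{local MP law} by the standard edge-exclusion, Helffer--Sj\"ostrand and inversion argument, importing only global confinement from \cite{FanMaPaquetteWang26}. What this buys is transparency: it isolates the exterior form of \eqref{trace gap} as the one nontrivial local input, and it shows that only the no-outlier statement genuinely has to come from outside. The paper's citation is shorter and leaves these routine details to \cite{FanMaPaquetteWang26}. Note also that the paper does not import $\|H\|_\op\prec1$ separately; it deduces it from the $b$-neighborhood statement, because the deterministic support is uniformly bounded.

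If you keep your derivation, two details in Step~2 need adjusting.
\begin{enumerate}
\item Theorem~\ref{local MP law} is only stated for $|E-\lambda_+|\le\delta$. The test function should therefore be a smoothed indicator of, say, $[E,\lambda_++\delta/2]$ rather than $[E,\lambda_++2\delta]$. This change is harmless by Step~1.
\item Smoothing at a scale comparable to the distance to the edge is admissible only for $E>\lambda_+$. For $E$ inside the support, a window of width $\lambda_+-E$ carries $\asymp N(\lambda_+-E)^{3/2}$ eigenvalues, so there the smoothing must stay at $N^{-1+\epsilon}$ to get $|\mathfrak n(E)-n(E)|\prec N^{-1}$.
\end{enumerate}
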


\begin{proof}
These statements follow from
\cite[Corollary~2.7(a)--(c)]{FanMaPaquetteWang26} and the
square root behavior of the density near $\lambda_+$.
Choose $\delta_1$ small enough that the classical locations of the
first $\delta_1N$ eigenvalues belong to
$[\lambda_+-\delta,\lambda_+]$. In particular,
$\lambda_+-\gamma_1\asymp N^{-2/3}$, so the first estimate in
\eqref{eq:right-edge-confinement} also follows from
\eqref{eigenvalue rigidity}. The deterministic support is uniformly
bounded, which gives the operator norm bound.
\end{proof}

\subsection{Interpolation matrices and basic estimates}
\label{sec:basic-scales}
Recall the definition of the Gaussian interpolation matrix $\tilde{X}$.
For $\gamma=0,\ldots,N$, let $X_\gamma$ be the random matrix
whose first $\gamma$ columns are those of $\widetilde X$ and whose remaining columns are those of $X$.
Thus $X_0=X$ and $X_N=\widetilde X$.
Set
\[
H_\gamma:=X_\gamma X_\gamma^*,\qquad
\mathcal H_\gamma:=X_\gamma^*X_\gamma,
\]
and define
\[
G_{1,\gamma}(z):=(H_\gamma-zI_M)^{-1},\qquad
G_{2,\gamma}(z):=(\mathcal H_\gamma-zI_N)^{-1},\qquad
m_{2N,\gamma}(z):=\frac1N\operatorname{Tr}G_{2,\gamma}(z).
\]
We also consider the minors $X_\gamma^{(\mathbb{T})}$ and
$H_\gamma^{(\mathbb{T})}$, and they are defined in the same way as
$X^{(\mathbb{T})}$ and $H^{(\mathbb{T})}$, respectively.

\begin{lemma}[Preservation by convolution]
\label{lem:interpolation-stability}
Suppose that $q_1,\ldots,q_N$ satisfy Assumption~\ref{assu.1.1}.
For each $t \in [0,t_*]$, where $t_*:=\tau/2$, define
\[
q_\gamma(t)
:=(I_M-t\Sigma^{-1})^{1/2}q_\gamma
+\sqrt{t}\,g_\gamma,
\qquad
g_\gamma:=\sqrt N\,x_\gamma^w,
\qquad 1\le\gamma\le N.
\]
Then $q_1(t),\ldots,q_N(t)$ also satisfy
Assumption~\ref{assu.1.1}.
\end{lemma}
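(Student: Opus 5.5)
We verify the three parts of Assumption~\ref{assu.1.1} for $q_\gamma(t)$ one at a time. Write $B_t:=(I_M-t\Sigma^{-1})^{1/2}$. This is well defined, symmetric, and commutes with $\Sigma$, because $t\le\tau/2$ and $\Sigma\ge\tau I_M$ give $I_M-t\Sigma^{-1}\ge\frac12 I_M$; in particular $\|B_t\|_\op\le1$. Part (ii) is untouched, since $M$ and $N$ do not change.

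For part (i), independence is immediate. The vectors $q_1,\ldots,q_N,g_1,\ldots,g_N$ are mutually independent, and $q_\gamma(t)$ is a measurable function of the pair $(q_\gamma,g_\gamma)$ alone. Centering follows from $\E q_\gamma=0$ and $\E g_\gamma=0$. For the covariance, independence kills the cross terms, and $g_\gamma\sim N(0,I_M)$, so
\[
\E q_\gamma(t)q_\gamma(t)^T=B_t\Sigma B_t+tI_M=(I_M-t\Sigma^{-1})\Sigma+tI_M=\Sigma ,
\]
where we used that $B_t$ commutes with $\Sigma$.

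For part (iii), fix a deterministic symmetric $A$ and expand the quadratic form:
\[
q_\gamma(t)^TAq_\gamma(t)-\Tr(\Sigma A)=\bigl[q_\gamma^T(B_tAB_t)q_\gamma-\Tr(\Sigma B_tAB_t)\bigr]+t\bigl[g_\gamma^TAg_\gamma-\Tr A\bigr]+2\sqrt t\,q_\gamma^TB_tAg_\gamma .
\]
This uses $\Tr(\Sigma B_tAB_t)+t\Tr A=\Tr\bigl((B_t\Sigma B_t+tI)A\bigr)=\Tr(\Sigma A)$. We then bound each term in $L^p$.

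The first term is at most $C_p\|B_tAB_t\|_\HS\le C_p\|A\|_\HS$, by the hypothesis \eqref{eq:quadratic-form-moments} applied to the deterministic symmetric matrix $B_tAB_t$. The second term is a Gaussian quadratic form, so Hanson--Wright (or the Gaussian case of the same concentration) gives $\|g^TAg-\Tr A\|_p\le C_p'\|A\|_\HS$.

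The cross term is the only step needing care, since it mixes the two vectors. Condition on $q_\gamma$. Then $q_\gamma^TB_tAg_\gamma$ is a centered Gaussian with variance $|AB_tq_\gamma|^2$, so
\[
\E\bigl[|q_\gamma^TB_tAg_\gamma|^p\,\big|\,q_\gamma\bigr]\le C_p\,(q_\gamma^TB_tA^*AB_tq_\gamma)^{p/2}.
\]
Set $K:=B_tA^*AB_t$, which is deterministic, symmetric and positive semidefinite. Write $q_\gamma^TKq_\gamma=\Tr(\Sigma K)+\bigl(q_\gamma^TKq_\gamma-\Tr(\Sigma K)\bigr)$. The first piece satisfies $\Tr(\Sigma K)\le\|\Sigma\|_\op\|A\|_\HS^2\lesssim\|A\|_\HS^2$. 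For the fluctuation, apply the hypothesis with the matrix $K$ and exponent $p/2$ (assuming $p\ge4$ first, and recovering small $p$ by monotonicity of $L^p$ norms). This gives a bound of $C\|K\|_\HS\le C\|A\|_\op\|A\|_\HS\le C\|A\|_\HS^2$. Hence $\|q_\gamma^TKq_\gamma\|_{p/2}\lesssim\|A\|_\HS^2$, and therefore
\[
\|q_\gamma^TB_tAg_\gamma\|_p\lesssim\|A\|_\HS .
\]

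Summing the three bounds, with $t\le1$, yields \eqref{eq:quadratic-form-moments} for $q_\gamma(t)$ with constants depending only on $p$, $\tau$ and the original $C_p$, uniformly in $\gamma$, $t$, $M$ and $N$. This completes the verification of Assumption~\ref{assu.1.1}.
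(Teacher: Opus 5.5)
Your proof is correct and is essentially the paper's argument. It uses the same covariance identity $B_t\Sigma B_t+tI_M=\Sigma$, the same three-term split of $q_\gamma(t)^TAq_\gamma(t)-\Tr(\Sigma A)$, and the same conditioning on $q_\gamma$ for the Gaussian cross term.

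The only difference is how you bound $\E|AB_tq_\gamma|^p$. You apply the hypothesis once to $K=B_tA^*AB_t$. Strictly, you should apply it to $\operatorname{Re}K$: for complex $A$, $K$ is Hermitian but not complex symmetric, and $\operatorname{Re}K$ is the real symmetric matrix with the same quadratic form on real vectors. The paper instead applies the hypothesis only to rank-one matrices $uu^T$, sums over rows with Minkowski's inequality to get $\bigl\||Dq|\bigr\|_p\lesssim\|D\|_{\HS}$, and handles complex $A$ through its real and imaginary parts.
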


\begin{proof}[Proof of Lemma \ref{lem:interpolation-stability}]
Set
\[
B_t:=(I_M-t\Sigma^{-1})^{1/2},\qquad
C_t:=\sqrt t\,I_M.
\]
These are deterministic real symmetric matrices satisfying
$B_t\Sigma B_t^T+C_tC_t^T=\Sigma$ and
$\|B_t\|_{\op}+\|C_t\|_{\op}\le C$. Thus, we know that the vectors $q_i(t)$ are independent and centered, with common covariance $\Sigma$.
It remains to verify \eqref{eq:quadratic-form-moments}.

Fix $p\ge2$ and let $A$ be a deterministic real symmetric matrix.
Ignoring the column index $\gamma$, write $q(t)=B_tq+C_tg$.
Then
\begin{align*}
q(t)^TAq(t)-\operatorname{Tr}(\Sigma A)
={}&q^TB_t^TAB_tq-\operatorname{Tr}(\Sigma B_t^TAB_t)
\\ &+g^TC_t^TAC_tg-\operatorname{Tr}(C_t^TAC_t)
+2q^TB_t^TAC_tg.
\end{align*}
The first two terms on the right side are controlled by Assumption~\ref{assu.1.1}
and the Gaussian moment formula, since
$\|B_t^TAB_t\|_{\HS}+\|C_t^TAC_t\|_{\HS}\le C\|A\|_{\HS}$.
To estimate the mixed term, note that, for every deterministic
$u\in\mathbb R^M$, Assumption~\ref{assu.1.1} applied to $uu^T$ yields
\[
\|u^Tq\|_{2p}^2
=\|(u^T q)^2\|_{p}
=\|q^Tuu^Tq\|_p
\le(C_p+\|\Sigma\|_{\op})\norm{u}_2^2.
\]
where the last inequality follows from 
$\Tr(uu^T)=\|u\|_2^2$. Then, for every deterministic real matrix $D$, we write $d_j^T$ for the rows of $D$,
\[
\bigl\|\|Dq\|_2\bigr\|_{p}^2
=
\left\|\sum_j|d_j^T q|^2\right\|_{p/2}
\le
\sum_j\|d_j^T q\|_{p}^2
\le
C_p\sum_j\|d_j\|_2^2
=
C_p\|D\|_{\mathrm{HS}}^2.
\]
Conditioning on $q$ and using the Gaussian moment formula, we obtain
\[
\|q^TB_t^TAC_tg\|_p
\le C_p\bigl\||C_t^TAB_tq|\bigr\|_p
\le C_p\|C_t^TAB_t\|_{\HS}
\le C_p\|A\|_{\HS}.
\]
Combining these estimates completes the proof, while the bound for complex symmetric $A$ follows by applying the real bound to its real and imaginary parts.
\end{proof}

We next state two lemmas that provide the basic estimates used below.
Their proofs are postponed to Appendix \ref{app:basic scales} and
\ref{app:denominator}, respectively.

\begin{lemma}[Basic scales]\label{lem:res}
Fix a positive integer $K_0$.
For $\gamma\in\{0,\ldots,N\}$ and
$\mathbb T\subset\{1,\ldots,N\}$ with $|\mathbb T|\le K_0$,
write $R:=G_{1,\gamma}^{(\mathbb T)}(z)$.
Uniformly in $\gamma$, $\mathbb T$, and $z$ satisfying
\begin{equation}\label{eq:domain1}
|E-\lambda_+|\le N^{-2/3+\varepsilon},\qquad
N^{-2/3-\varepsilon}\le|\eta|\le N^{-2/3+\varepsilon},
\end{equation}
we have
\begin{equation}\label{eq:schatten}
\operatorname{Tr}|R|\prec N,\quad
\operatorname{Tr}|R|^{3/2}\prec N^{1+C\varepsilon},\quad
\operatorname{Tr}|R|^s\prec N^{2s/3+C_s\varepsilon}\quad(s>3/2),
\end{equation}
where $|R|=(R^*R)^{1/2}$ and $s$ is fixed.
In particular, taking $s=2,4$ yields
\begin{equation}\label{eq.HS-bounds}
\|R\|_{\mathrm{HS}}\prec N^{2/3+C\varepsilon},\qquad
\|R^2\|_{\mathrm{HS}}\prec N^{4/3+C\varepsilon}.
\end{equation}
Moreover, let $x=N^{-1/2}q$, where $q$ satisfies
Assumption~\ref{assu.1.1} and is independent of
$X_\gamma^{(\mathbb T)}$.
For every fixed $p\ge1$, we have
\begin{equation}\label{eq:primitive}
\begin{aligned}
\lVert x^TRx-N^{-1}\operatorname{Tr}(\Sigma R)\rVert_p
&\le C_pN^{-1/3+C\varepsilon},\\
\lVert Rx\rVert_p&\le C_pN^{1/6+C\varepsilon},\qquad
\lVert x\rVert_p\le C_p,\\
\lVert x^TR^2x\rVert_p&\le C_pN^{1/3+C\varepsilon},\qquad
\lVert x^TR^3x\rVert_p\le C_pN^{1+C\varepsilon}.
\end{aligned}
\end{equation}
These bounds are uniform in the same parameters.
The corresponding Schatten and trace bounds remain valid when a fixed
number of deterministic matrices with bounded operator norms are
inserted between the resolvent factors.
\end{lemma}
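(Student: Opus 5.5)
We would first prove the Schatten bounds \eqref{eq:schatten} from the spectral decomposition of $R$, and then obtain \eqref{eq:primitive} from them using the moment concentration \eqref{eq:quadratic-form-moments}. Let $\lambda_k$ be the eigenvalues of $H_\gamma^{(\mathbb T)}$. Then
\[
\operatorname{Tr}|R|^s=\sum_k|\lambda_k-z|^{-s}.
\]
The local law (Theorem~\ref{local MP law}) and rigidity (Theorem~\ref{rigidity}) are stated for $H$. By Lemma~\ref{lem:interpolation-stability}, every column of $X_\gamma$ satisfies Assumption~\ref{assu.1.1}, so both results apply to $H_\gamma$ uniformly in $\gamma$. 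For the minors, deleting $|\mathbb T|\le K_0$ columns perturbs the spectrum by a rank-$K_0$ change. Cauchy interlacing then keeps rigidity \eqref{eigenvalue rigidity} valid up to an index shift of at most $K_0$. Alternatively, one can rerun the cited local laws for $N-|\mathbb T|$ columns, whose edge moves only by $O(N^{-1})$.

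For the sums, split the eigenvalues into three groups.

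\textbf{Edge group} ($k\le N^{C\varepsilon}$). Rigidity gives $|\lambda_k-\lambda_+|\prec N^{-2/3+C\varepsilon}$. We only have $|\lambda_k-z|\ge|\eta|\ge N^{-2/3-\varepsilon}$, so each term is at most $N^{2s/3+s\varepsilon}$ and there are $N^{C\varepsilon}$ of them.

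\textbf{Intermediate group} ($N^{C\varepsilon}\le k\le\delta_1N$). Here $\lambda_+-\lambda_k\asymp (k/N)^{2/3}$ dominates $|E-\lambda_+|+|\eta|\le 2N^{-2/3+\varepsilon}$. Hence $|\lambda_k-z|\gtrsim (k/N)^{2/3}$, and the sum is bounded by
\[
\sum_k (N/k)^{2s/3}.
\]
This sum is $O(N^{2s/3})$ for $s>3/2$, since the exponent $2s/3>1$. It is $O(N\log N)$ for $s=3/2$, and $O(N^{2/3}\cdot N^{1/3})=O(N)$ for $s=1$.

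\textbf{Bulk group}. The remaining eigenvalues lie at distance $\gtrsim1$ from $z$ by the $b$-neighbourhood statement of Theorem~\ref{rigidity}, so they contribute $O(N)$.

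Combining the three groups gives \eqref{eq:schatten}, and \eqref{eq.HS-bounds} follows by taking $s=2,4$. When bounded deterministic matrices are inserted between resolvent factors, we would use Hölder's inequality for Schatten norms, $|\operatorname{Tr}(A_1R\cdots)|\le\prod\|A_i\|_\op\,\operatorname{Tr}|R|^s$.

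For \eqref{eq:primitive}, condition on $X_\gamma^{(\mathbb T)}$, which is independent of $x$, and apply \eqref{eq:quadratic-form-moments} with $A=R/N$. This gives
\[
\|x^TRx-N^{-1}\operatorname{Tr}(\Sigma R)\|_p\le C_pN^{-1}\bigl(\E\|R\|_\HS^p\bigr)^{1/p}.
\]
To turn the high-probability bound $\|R\|_\HS\prec N^{2/3+C\varepsilon}$ into a moment bound, we use the deterministic bound $\|R\|_\op\le|\eta|^{-1}\le N^{2/3+\varepsilon}$. This gives $\|R\|_\HS\le N^{7/6+\varepsilon}$ always, so the exceptional event of probability $N^{-D}$ contributes negligibly. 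The result is $N^{-1/3+C\varepsilon}$.

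The remaining bounds follow from the same computation with other matrices $A$:
\begin{itemize}
\item $A=\bar R R/N$ for $\|Rx\|_2^2=x^TR^*Rx$. Its trace term is $N^{-1}\operatorname{Tr}(\Sigma R^*R)\le N^{-1}\|R\|_\HS^2\prec N^{1/3+C\varepsilon}$, and its Hilbert–Schmidt norm is $N^{-1}\|R^*R\|_\HS\prec N^{1/3+C\varepsilon}$. Together these give $\|Rx\|_p\lesssim N^{1/6+C\varepsilon}$.
\item $A=I$ for $x$, giving $\|x\|_p\le C_p$.
\item $A=R^2/N$, whose trace is $\lesssim N^{-1}\operatorname{Tr}|R|^2\prec N^{1/3}$ and whose Hilbert–Schmidt norm is $N^{-1}\|R^2\|_\HS\prec N^{1/3}$.
\item $A=R^3/N$, whose trace is $\lesssim N^{-1}\operatorname{Tr}|R|^3\prec N$ and whose Hilbert–Schmidt norm is $N^{-1}(\operatorname{Tr}|R|^6)^{1/2}\prec N^{1}$.
\end{itemize}

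The main obstacle is the uniformity of rigidity over the interpolated matrices $H_\gamma$ and their minors, together with the near-edge counting. We would handle it through Lemma~\ref{lem:interpolation-stability} and interlacing, checking that the minor's spectral edge stays within $N^{-2/3+C\varepsilon}$ of $\lambda_+$. A further point is that stochastic domination must hold simultaneously for all $z$ in the domain. We would obtain this by a union bound over an $N^{-10}$-net, using Lipschitz continuity of $R$ in $z$.
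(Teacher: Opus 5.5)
Your proposal is correct and follows essentially the same route as the paper: a split of $\sum_k|\lambda_k-z|^{-s}$ into $N^{C\varepsilon}$ edge terms bounded by $|\eta|^{-s}$, intermediate terms handled by rigidity and interlacing (giving $N^{2s/3}\sum_k k^{-2s/3}$), and an $O(N)$ remainder, together with Schatten--H\"older for deterministic insertions and a conditional application of \eqref{eq:quadratic-form-moments} with $A=R,\,R^*R,\,R^2,\,R^3$, using $\|R\|_{\op}\le|\eta|^{-1}$ to pass from $\prec$ to $L^p$ bounds. One small correction: the eigenvalues with index $k>\delta_1N$ are at distance $\gtrsim1$ from $z$ because ordering and rigidity give $\lambda_k\le\lambda_{\lfloor\delta_1N\rfloor}\le\lambda_+-c\delta_1^{2/3}$, not because of the $b$-neighbourhood statement (the support itself contains $[\lambda_+-2\delta,\lambda_+]$); also, the intermediate estimate needs the edge cutoff $N^{C\varepsilon}$ with $C>3/2$.
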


\begin{lemma}\label{lem:denominator}
Under the assumptions and notation of Lemma~\ref{lem:res}, set
\[
d_0(z):=1+\frac1N\operatorname{Tr}(\Sigma\Pi(z))
=-\frac1{z m_2(z)}.
\]
The following estimates hold uniformly in $\gamma$, $\mathbb T$, and
$z$ satisfying \eqref{eq:domain1}:
\begin{equation}\label{eq:trace-mp}
\begin{gathered}
\left|N^{-1}\operatorname{Tr}(\Sigma R)-(d_0(z)-1)\right|
\prec N^{-1/3+C\varepsilon},\\
|d_0(z)|\asymp1,\qquad
\left\|\left(zI_M-\frac{\Sigma}{d_0(z)}\right)^{-1}\right\|_{\op}
=\|\Pi(z)\|_{\op}\le C.
\end{gathered}
\end{equation}
Moreover, for every fixed $p\ge1$,
\begin{equation}\label{eq:denominator-moments}
\begin{aligned}
\left\|(1+N^{-1}\operatorname{Tr}(\Sigma R))^{-1}\right\|_p
+\left\|(1+x^TRx)^{-1}\right\|_p
&\le C_p,\\
\left\|\frac1{1+x^TRx}-\frac1{d_0(z)}\right\|_p
&\le C_pN^{-1/3+C\varepsilon}.
\end{aligned}
\end{equation}
\end{lemma}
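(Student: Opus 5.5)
The estimates for $N^{-1}\Tr(\Sigma R)$ and $\Pi$ come from the local law and a short resolvent comparison. The denominator moments then follow from a lower bound on $|1+\cdot|$ combined with the concentration estimate \eqref{eq:primitive}.

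\emph{Step 1: deterministic part.} The identity $d_0=-1/(zm_2)$ and $zI_M-\Sigma/d_0=-\Pi^{-1}$ are \eqref{eq:population-denominator}--\eqref{eq:population-stability}. On the domain \eqref{eq:domain1}, $z$ lies within $N^{-2/3+\varepsilon}$ of $\lambda_+$, so \eqref{eq:regular-edge-bounds} gives $|d_0|\asymp1$ and $\|\Pi\|_{\op}\le C$. For $|\eta|$ below $N^{-1+\epsilon}$ we do not need the local law, since these are deterministic quantities. The lower half-plane is handled by conjugation.

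\emph{Step 2: trace estimate.} By Lemma~\ref{lem:interpolation-stability}, each column of $X_\gamma$ satisfies Assumption~\ref{assu.1.1}, with $\widetilde X$ columns having covariance $\Sigma/N$. Hence $X_\gamma^{(\mathbb T)}$ falls under the local law (Theorem~\ref{local MP law}), after adjusting $N$ to $N-|\mathbb T|$, which shifts $m_2$ by $O(N^{-1})$ by stability of the edge. For $\eta\ge N^{-2/3}$ (say), \eqref{weighted trace local law} gives $|N^{-1}\Tr(\Sigma R)-a_0|\prec\Psi\lesssim N^{-1/3+C\varepsilon}$, since $\Im m_2\lesssim N^{-1/3+\varepsilon}$ there. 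For smaller $|\eta|\ge N^{-2/3-\varepsilon}$, I would compare $R(z)$ with $R(z')$, where $z'=E+\ii N^{-2/3}$. Writing the difference through eigenvalues,
\[
\Bigl|\frac1N\Tr\Sigma\bigl(R(z)-R(z')\bigr)\Bigr|\le \frac{C}{N}\sum_j\frac{|\eta-\eta'|}{|\lambda_j-z||\lambda_j-z'|}.
\]
Rigidity (Theorem~\ref{rigidity}) gives $|\lambda_j-E|\gtrsim (j/N)^{2/3}$ for $j\ge N^{C\varepsilon}$. For the $O(N^{C\varepsilon})$ eigenvalues near the edge, the sum is bounded by $N^{-1}\cdot N^{C\varepsilon}\cdot|\eta|^{-1}\le N^{-1/3+C\varepsilon}$. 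The bulk contributes $N^{-1}\sum_j \eta' N^{4/3}j^{-4/3}\lesssim N^{-1/3}$. Meanwhile $a_0(z)-a_0(z')=O(N^{-1/3+\varepsilon})$ by the square-root regularity of $m_2$. This is exactly the style of bound \eqref{eq:schatten}, so Lemma~\ref{lem:res} can be invoked directly; $\Tr|R|^{3/2}$-type control suffices. This gives the first line of \eqref{eq:trace-mp}.

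\emph{Step 3: denominators.} This is the main obstacle, because near the edge $|\eta|$ can be tiny and $1+x^TRx$ has no a priori lower bound in every realization. Only moments are required, so I would use the Schur complement identity. Let $\widehat R$ be the resolvent with the column $x$ added. Then
\[
\frac{1}{1+x^TRx}=-z\,[\widehat G_2]_{ii},
\]
which is the standard identity $[\mathcal G_2]_{ii}=-1/(z(1+x^TRx))$. Next bound $|[\widehat G_2]_{ii}|\le \eta^{-1}\le N^{2/3+\varepsilon}$ deterministically. On an overwhelming-probability event, the entrywise law \eqref{entry wise local law}, extended as in Step 2, gives $|[\widehat G_2]_{ii}|\le C$. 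Together these give $\|(1+x^TRx)^{-1}\|_p\le C_p$, since the complement event has probability $N^{-D}$, which kills the $N^{2/3+\varepsilon}$ bound.

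For $1+N^{-1}\Tr(\Sigma R)$, use Step 2 and $|d_0|\asymp1$: on a high-probability event it is at least $c$. Off that event, $|\Im(1+N^{-1}\Tr\Sigma R)|\ge c|\eta|N^{-1}\Tr|R|^2$, which is bounded below by a polynomial in $N^{-1}$, so it is again negligible.

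Finally, write
\[
\frac1{1+x^TRx}-\frac1{d_0}=\frac{d_0-1-x^TRx}{(1+x^TRx)d_0}.
\]
The numerator is split as $(x^TRx-N^{-1}\Tr\Sigma R)+(N^{-1}\Tr\Sigma R-d_0+1)$. By \eqref{eq:primitive} and the first line of \eqref{eq:trace-mp} (converted to moments using the crude polynomial bound), both pieces are $O_p(N^{-1/3+C\varepsilon})$ in $L^p$. Applying Hölder with the denominator moment bound finishes \eqref{eq:denominator-moments}.
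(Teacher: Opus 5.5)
There is one step that fails as written: the exceptional-event bound for $(1+N^{-1}\Tr(\Sigma R))^{-1}$ in Step~3. The inequality $|\Im(1+N^{-1}\Tr\Sigma R)|\ge\tau|\eta|N^{-1}\Tr|R|^2$ is correct. However, $\Tr|R|^2=\sum_j|\lambda_j-z|^{-2}$ has no realization-wise lower bound that is polynomial in $N^{-1}$. When $M<N-|\mathbb T|$ there are no forced zero eigenvalues. Since the columns $q_i$ are unbounded (e.g.\ Gaussian), with small but positive probability every eigenvalue of $H_\gamma^{(\mathbb T)}$ exceeds any prescribed $L$, and then $\Tr|R|^2\le M(L-|z|)^{-2}$ is arbitrarily small. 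That exceptional event is exactly where you invoke the bound, so your $L^p$ estimate is not yet justified.

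The paper's fix is to multiply by $z$ before taking imaginary parts. In an eigenbasis $(v_j,\lambda_j)$ of $H_\gamma^{(\mathbb T)}$,
\[
\Im\Bigl[z\bigl(1+N^{-1}\Tr\Sigma R\bigr)\Bigr]=\eta\Bigl(1+\frac1N\sum_j\frac{\langle v_j,\Sigma v_j\rangle\lambda_j}{|\lambda_j-z|^2}\Bigr),
\]
whose modulus is at least $|\eta|$ because $\lambda_j\ge0$. Hence $|(1+N^{-1}\Tr\Sigma R)^{-1}|\le|z|/|\eta|\le CN^{2/3+\varepsilon}$ in every realization, and the $N^{-D}$ probability of the bad event absorbs it. Alternatively, you can keep your bound and use three further facts: $\Tr|R|^2\ge M(\|H_\gamma^{(\mathbb T)}\|_{\op}+|z|)^{-2}$; polynomial moments of $\|H_\gamma^{(\mathbb T)}\|_{\op}\le\Tr H_\gamma^{(\mathbb T)}$, which follow from Assumption~\ref{assu.1.1} with $A=I_M$; and Cauchy--Schwarz on the bad event.

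The rest of your argument works, partly by a different route.
\begin{itemize}
\item Your Schur-complement identity $(1+x^TRx)^{-1}=-z[\widehat{\mathcal G}_2]_{ii}$ gives exactly the paper's deterministic bound $|z|/|\eta|$, via $|[\widehat{\mathcal G}_2]_{ii}|\le|\eta|^{-1}$. You then get the high-probability bound from the entrywise law for the enlarged matrix. The paper instead gets $|1+x^TRx|\ge c/2$ from \eqref{eq:primitive} and the trace estimate, which needs no further local-law input, and it uses the same positivity computation as above for the deterministic bound.
\item In Step~2 the comparison between $z$ and $z'=E+\ii N^{-2/3}$ is unnecessary. Theorem~\ref{local MP law} holds down to $\eta\ge N^{-1+\epsilon}$, and on all of \eqref{eq:domain1} one has $\Psi\lesssim N^{-1/3+C\varepsilon}$, since $(N|\eta|)^{-1}\le N^{-1/3+\varepsilon}$.
\item The paper also avoids re-running the local law for $N-|\mathbb T|$ columns. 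It applies the local law to $X_\gamma$ and uses that $R-G_{1,\gamma}$ has rank at most $K_0$ and norm at most $2|\eta|^{-1}$, so $|N^{-1}\Tr\Sigma(R-G_{1,\gamma})|\le2K_0\|\Sigma\|_{\op}/(N|\eta|)$.
\item If you do rescale to $N-|\mathbb T|$ columns, the shift in $m_2$ near the edge is $O(N^{-1}(\kappa+\eta)^{-1/2})$ rather than $O(N^{-1})$. This is still harmless.
\end{itemize}
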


\section{Proof of Theorem \ref{thm.universality}}
\label{sec.main-proof}
Recall the Gaussian interpolation $X(t)$ and
$\widetilde X=X(t_0)$ introduced in \eqref{eq.Guassian interpolation}. Following \cite{BX,PY}, the proof of Theorem \ref{thm.universality} consists of two key steps: establishing the Tracy Widom limit for $H(t_0)$ and comparing its largest eigenvalue distribution
with that of $H=H(0)$. In the sequel, we show the detailed discussion.

\subsection{Edge Universality of \texorpdfstring{$X(t_0)X(t_0)^T$}{the Gaussian-divisible ensemble}}
We begin with establishing the Tracy--widom limit for $H(t_0)$. Fix $t=t_0=N^{-1/3+\varepsilon}$ with $0<\varepsilon<1/12$, and set
\[
 D=\Sigma-tI_M,\qquad
 Y_0=N^{-1/2}(I_M-t\Sigma^{-1})^{1/2}S,\qquad
 V=Y_0Y_0^T.
\]
The lower bound on $\Sigma$ ensures that $D$ is positive definite for
all sufficiently large $N$. Conditionally on $S$, the matrix
$X(t)=Y_0+\sqrt t\,X^w$ is the rectangular matrix Dyson Brownian motion
at time $t$, with initial matrix $Y_0$. Its unconditional column covariance is
$N^{-1}\Sigma$. We use this interpolation in order to keep the
deterministic limiting density fixed while applying the dynamical
result of~\cite{DY}.

The regularity of the right edge is stable under replacing $\Sigma$
by $D$. Denote the right edge for $D$ by $a$, and write
\[
 g(w)=m_{1,D}(w),\qquad h(w)=M^{-1}\Tr(V-wI_M)^{-1}.
\]
The local law and rigidity from Section~\ref{sec:basic-scales} apply
uniformly to $V$. Taking $\eta_*=N^{-2/3+\varepsilon}$, these estimates
and the square root behavior of the deterministic density show that,
with high probability, $V$ is $\eta_*$-regular in the sense of
\cite[Definition~V.1]{DY}. Indeed, rigidity gives
$\lambda_1(V)-a\prec N^{-2/3}=o(\eta_*)$, and the averaged local law, including the exterior bound in \eqref{trace gap}, gives the
required upper and lower estimates for the imaginary part of $h$
on the domains in that definition. Spectral confinement supplies its
remaining conditions. For any fixed $0<\varepsilon_1<\varepsilon$,
\[
 N^{\varepsilon_1}\eta_*\le t^2\le N^{-\varepsilon_1}
\]
for all sufficiently large $N$. Hence
\cite[Theorem~V.3]{DY}, applied conditionally on $S$, gives
\begin{equation}\label{eq.population-conditional-TW}
 \widehat\gamma_tN^{2/3}
 \bigl(\lambda_1(H(t))-\widehat\lambda_{+,t}\bigr)
 \ \Longrightarrow\ \TW.
\end{equation}
The theorem applies to a deterministic initial matrix satisfying the stated regularity bounds. Conditionally on $S$, no distributional assumption on its columns is needed. On the regularity event the constants can be chosen uniformly, so we may average the conditional conclusion over $S$.

Here $\widehat\lambda_{+,t}$ and $\widehat\gamma_t$ are the right edge
and its scaling parameter for the rectangular free convolution
started from the empirical spectral distribution of $V$.
When $M>N$, we apply the same theorem to $y^{-1/2}X(t)^T$ and then
rescale its nonzero eigenvalues by $y$; this gives exactly the
$N$-normalization in \eqref{eq.population-conditional-TW}.

We next compare these random parameters with $\lambda_+$ and
$\gamma_+$. For a Stieltjes transform $u$ define
\[
 \Phi_{t,u}(w)
 =w\bigl(1-tyu(w)\bigr)^2
  +t(1-y)\bigl(1-tyu(w)\bigr).
\]
The subordination equation for the rectangular free convolution is
\begin{equation}\label{eq.population-rfc}
 z=\Phi_{t,u}(w),\qquad
 m_{1,t}(z)=\frac{u(w)}{1-tyu(w)}.
\end{equation}
For $u=g$, this convolution is precisely the deformed MP law for
$\Sigma$. To verify this identity directly, let
$m=m_{2,\Sigma}(z)$, $b=1+ty m_{1,\Sigma}(z)$, and set
\[
 w=zb(1+tm),\qquad k=\frac{m}{1+tm}.
\]
Substitution in the deformed MP equation gives
$w=f_D(k)$ and
$g(w)=m_{1,\Sigma}(z)/b$. Moreover,
$w=zb^2-t(1-y)b$. These identities give
\eqref{eq.population-rfc}, with $m_{1,t}=m_{1,\Sigma}$.
The branch is fixed by the Stieltjes transform normalization at
infinity and analytic continuation.

Let $\xi$ be the rightmost critical point of $\Phi_{t,g}$, and let
$\widehat\xi$ be its counterpart for $\Phi_{t,h}$. The regular square
root expansion of $g$ at $a$ implies
\begin{equation}\label{eq.population-critical-scales}
 \xi-a\asymp t^2,\qquad
 g'(\xi)\asymp t^{-1},\qquad
 \Phi_{t,g}''(\xi)\asymp t^{-2}.
\end{equation}
In particular, $\Phi_{t,g}(\xi)=\lambda_+$, whereas
$\Phi_{t,h}(\widehat\xi)=\widehat\lambda_{+,t}$.

For completeness, we record the estimates needed to compare these
critical points. Uniformly in a sufficiently small disc of radius
$ct^2$ centered at $\xi$, the local law and rigidity imply
\begin{equation}\label{eq.population-exterior-derivatives}
 \bigl|h^{(j)}(w)-g^{(j)}(w)\bigr|
 \prec\frac{1}{Nt^{2j+2}},\qquad j=0,1,2.
\end{equation}
Here is a direct justification that only uses rigidity near the
right edge. Split the spectral integrals at a fixed distance below
$a$ and compare each empirical eigenvalue near $a$ with its
classical location. For $j=1,2$, the quantile estimate gives
\[
 \bigl|h^{(j)}(w)-g^{(j)}(w)\bigr|
 \prec \frac1N\int_0^c\frac{\dd s}{(s+t^2)^{j+2}}+1
 \lesssim\frac{1}{Nt^{2j+2}}+1.
\]
The last term bounds the part a fixed distance below the edge and
is absorbed since $\varepsilon<1/12$. For $j=0$, first apply the
averaged local law at imaginary part $t^2$, which gives
$O_\prec((Nt^2)^{-1})$, and then integrate the $j=1$ estimate along
a vertical segment of length $O(t^2)$. The additional $O(t^2)$
term is absorbed because $Nt^4\le1$. This proves
\eqref{eq.population-exterior-derivatives}.

It follows from \eqref{eq.population-exterior-derivatives} that
\[
 \Phi_{t,h}-\Phi_{t,g}\prec\frac1{Nt},\qquad
 \Phi_{t,h}'-\Phi_{t,g}'\prec\frac1{Nt^3},\qquad
 \Phi_{t,h}''-\Phi_{t,g}''\prec\frac1{Nt^5}
\]
on this disc. Since $Nt^3=N^{3\varepsilon}\to\infty$,
\eqref{eq.population-critical-scales} and the critical point
equations imply
\[
 |\widehat\xi-\xi|\prec\frac1{Nt},\qquad
 |\widehat\lambda_{+,t}-\lambda_+|\prec\frac1{Nt}.
\]
The critical point is the rightmost one by spectral confinement
and the positivity of $\Phi_{t,h}'$ to its right, as in the
support characterization of the rectangular free convolution.
Consequently,
\begin{equation}\label{eq.population-edge-matching}
 N^{2/3}|\widehat\lambda_{+,t}-\lambda_+|
 \prec N^{-\varepsilon}=o(1).
\end{equation}

The scaling parameter can be checked from the same equations.
The coefficient of $\sqrt{\lambda_+-E}/\pi$ in the density of
the $M$-normalized law in \eqref{eq.population-rfc} is
\[
 a_t=
 \frac{g'(\xi)}{(1-tyg(\xi))^2}
 \sqrt{\frac{2}{\Phi_{t,g}''(\xi)}}.
\]
Thus $\gamma_+=(ya_t)^{2/3}$. The random coefficient is given by
the same formula with $(g,\xi)$ replaced by
$(h,\widehat\xi)$. Using
\eqref{eq.population-exterior-derivatives},
$g''=O(t^{-3})$, and $\Phi_{t,g}'''=O(t^{-4})$, we obtain
\begin{equation}\label{eq.population-scale-matching}
 \left|\frac{\widehat\gamma_t}{\gamma_+}-1\right|
 \prec\frac1{Nt^3}=N^{-3\varepsilon}=o(1).
\end{equation}
Combining \eqref{eq.population-conditional-TW},
\eqref{eq.population-edge-matching}, and
\eqref{eq.population-scale-matching} proves
\[
 \gamma_+N^{2/3}\bigl(\lambda_1(H(t_0))-\lambda_+\bigr)
 \ \Longrightarrow\ \TW.
\]
This completes the first step.

\subsection{Green function comparison between \texorpdfstring{$\tilde{X}$}{X̃} and\texorpdfstring{$X$}{X}}

As shown in \cite{BX,PY}, to prove Theorem \ref{thm.universality}, it suffices to establish a Green function comparisons for $\tilde{H}$ and $H$ on the edge, i.e., Proposition \ref{Green function comparison thm}.  Let $\tilde{\mathcal{G}}_{2}(z) : =  (\tilde{\mathcal{H}} - zI)^{-1}$ be the Green function of $\tilde{\mathcal{H}}$ with $\tilde{\mathcal{H}} := \tilde{X}^*\tilde{X}$, and the corresponding Stieltjes transform is denoted by $\tilde{m}_{2N}(z) := N^{-1} \Tr \tilde{\mathcal{G}}_{2}(z)$. 

\begin{proposition}[Green function comparison] \label{Green function comparison thm}
	Let $F: \mathbb{R} \to \mathbb{R}$ be a function whose derivatives satisfy
	\begin{align*}
		\max_x |F^{(\alpha)}(x)|(|x|+1)^{-C_1} \le C_1, \quad  \alpha = 1,2,3,4
	\end{align*}
	for some constant $C_1 > 0$. Then there exist $\epsilon_0 > 0$, $N_0 \in \mathbb{N}$ and $\delta > 0$ depending on $C_1$ and the constants in the assumptions such that for any $\epsilon < \epsilon_0$, $N \ge N_0$ and real numbers $E, E_1$ and $E_2$ satisfying 
	\begin{align*}
		|E - \lambda_+| \le N^{-2/3+\epsilon}, \quad |E_1 - \lambda_+| \le N^{-2/3+\epsilon}, \quad |E_2 - \lambda_+| \le N^{-2/3+\epsilon},
	\end{align*}
	and $\eta_0 = N^{-2/3-\epsilon}$, we have
	\begin{align}\label{comparison 1}
		|\E F(N\eta_0\Im \tilde{m}_{2N}(z)) - \E F(N\eta_0\Im m_{2N}(z)) | \le C N^{-1/6+C\epsilon}, \quad z = E + \mathrm{i}\eta_0,
	\end{align}
	and
	\begin{align}\label{comparison 2}
		\left| \E F\left( N\int_{E_1}^{E_2}\mathrm{d}y \Im \tilde{m}_{2N}(y+\mathrm{i}\eta_0) \right)- \E F\left( N\int_{E_1}^{E_2}\mathrm{d}y \Im m_{2N}(y+\mathrm{i}\eta_0) \right) \right|\le C N^{-1/6+C\epsilon}
	\end{align}
	for some constant $C > 0$.
\end{proposition}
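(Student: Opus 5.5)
We argue by a Lindeberg-type telescoping over the interpolation matrices $X_\gamma$, $\gamma=0,\dots,N$, of Section~\ref{sec:basic-scales}. We write
\[
\E F(N\eta_0\Im \tilde m_{2N})-\E F(N\eta_0\Im m_{2N})=\sum_{\gamma=1}^N\bigl(\E F(N\eta_0\Im m_{2N,\gamma})-\E F(N\eta_0\Im m_{2N,\gamma-1})\bigr).
\]
It then suffices to show that each summand is $O(N^{-7/6+C\epsilon})$. Fix $\gamma$ and let $R=G_{1,\gamma}^{(\gamma)}(z)$ be the resolvent of the matrix with column $\gamma$ removed. This $R$ is independent of both $x_\gamma$ and $\tilde x_\gamma$. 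We first pass from $m_{2N}$ to $\Tr G_1$ using $m_{2N}=ym_{1N}+(y-1)/z$. The rank-one (Sherman--Morrison) formula then gives
\[
\Tr G_{1,\gamma}-\Tr R=-\frac{x^TR^2x}{1+x^TRx},\qquad x=\tilde x_\gamma,
\]
and the analogous identity with $x=x_\gamma$ for $G_{1,\gamma-1}$. Hence $N\eta_0\Im m_{2N,\gamma}=N\eta_0\Im m^{(\gamma)}+\Delta(x)$ with $\Delta(x)=-y\eta_0\Im\bigl(x^TR^2x/(1+x^TRx)\bigr)$. By Lemma~\ref{lem:res} and Lemma~\ref{lem:denominator}, $\|\Delta\|_p\lesssim N^{-2/3-\epsilon}N^{1/3+C\epsilon}=N^{-1/3+C\epsilon}$.

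We Taylor expand $F$ around the $x$-independent value $F(N\eta_0\Im m^{(\gamma)})$ to fourth order. The fourth-order remainder is $O(N^{-4/3+C\epsilon})$, which is acceptable after summing over $\gamma$. We then expand $\Delta(x)$ itself using $(1+x^TRx)^{-1}=d_0^{-1}-d_0^{-2}(x^TRx-d_0)+\dots$, where $x^TRx-(d_0-1)=O(N^{-1/3+C\epsilon})$ in $L^p$ by \eqref{eq:primitive} and \eqref{eq:trace-mp}. After this, every term is a polynomial in the centered quadratic forms $x^TRx-N^{-1}\Tr\Sigma R$ and $x^TR^2x-N^{-1}\Tr\Sigma R^2$, with coefficients measurable with respect to $R$. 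First-order terms vanish in expectation, because $x_\gamma$ and $\tilde x_\gamma$ have the same mean and covariance; after conditioning on $R$, these terms are matched exactly. Terms of total order $\ge3$ in the small quantities are bounded by $N^{-1}\cdot N^{-1/3+C\epsilon}$, using $\|x^TRx-N^{-1}\Tr\Sigma R\|_p\le N^{-1/3+C\epsilon}$ and $\eta_0\|x^TR^2x-N^{-1}\Tr \Sigma R^2\|_p\lesssim \eta_0 N^{-1}\|\Sigma^{1/2}R^2\Sigma^{1/2}\|_\HS\lesssim N^{-1/3+C\epsilon}$, the latter from \eqref{eq.HS-bounds} and \eqref{eq:quadratic-form-moments}. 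Since the fourth moments of $x_\gamma$ and $\tilde x_\gamma$ are not assumed to match, the difficulty lies entirely in the \emph{second-order} terms. These reduce to differences of the form \eqref{1}:
\[
\E\bigl[(x^TA_1x)(x^TA_2x)\bigr]-\E\bigl[(\tilde x^TA_1\tilde x)(\tilde x^TA_2\tilde x)\bigr],\qquad A_i\in\{R,R^2,\bar R,\bar R^2\},
\]
weighted by $F''$ and by $d_0$-dependent factors. A naive bound via \eqref{eq:quadratic-form-moments} gives only $N^{-2}\|A_1\|_\HS\|A_2\|_\HS\cdot(\text{weights})\sim N^{-2/3+C\epsilon}$ per column, which is far too large.

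The main obstacle is therefore to show that the weighted covariance of quadratic forms, summed over $\gamma$, is $O(N^{-1/6+C\epsilon})$. Our plan is not to bound these terms column by column. Instead, we sum over $\gamma$ first and exploit the fact that $\sum_\gamma$ of the fourth cumulant-type quantity $\E[(x_\gamma^TA_1x_\gamma-\cdot)(x_\gamma^TA_2x_\gamma-\cdot)]$ can be rewritten through resolvent identities in terms of the full resolvent $G$ of $X_\gamma$ again. Concretely, we expand $R$ around $G$, or around a further minor, by repeated Schur-complement/Sherman--Morrison steps, and replace $R$ inside the quadratic forms by $\Pi$ plus fluctuation terms. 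The leading deterministic contribution, with $A_i=\Pi$, has bounded Hilbert--Schmidt norm $O(\sqrt N)$ after normalization, and thus produces harmless terms. The terms involving $R-\Pi$ reproduce the same weighted sum of covariances multiplied by a factor that, by the stability bound $\|\Pi\|_\op\le C$ and $|d_0|\asymp1$ of Lemma~\ref{lem:denominator}, has size strictly less than one up to $N^{-c}$ corrections. This yields a self-consistent inequality $\mathcal S\le c\,\mathcal S+O(N^{-1/6+C\epsilon})$ for the summed quantity $\mathcal S$, which we solve to obtain the desired bound. Verifying the contraction, that is, showing that every fluctuation term produced by the expansion either carries an extra factor $N^{-1/3}$ via \eqref{eq:primitive} or recombines into $\mathcal S$, is where we expect most of the work.

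Finally, \eqref{comparison 2} follows in the same way. The argument of $F$ is an integral over $y\in[E_1,E_2]$, an interval of length $N^{-2/3+\epsilon}$, so every Taylor term becomes an $E$-integral of the same resolvent expressions. All bounds above are uniform in $z$ on \eqref{eq:domain1}, and the integration costs only a factor $N\cdot N^{-2/3+\epsilon}\cdot N^{-1/3+C\epsilon}/(N\eta_0)$-type normalization, which is absorbed into $N^{C\epsilon}$.
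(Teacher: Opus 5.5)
There is a genuine gap. You treat $x_\gamma$ and $\tilde x_\gamma$ as two arbitrary vectors whose first two moments agree. You never use that $\sqrt N\tilde x_\gamma=S_{t_0}q_\gamma+\sqrt{t_0}\,g_\gamma$ is a small Gaussian perturbation of the \emph{same} vector, and that structure is what the paper's proof runs on. By Lemma~\ref{lem.moment-identities}, every fourth-order difference equals $t_0N^{-2}K_\gamma[A,B]$ with $|K_\gamma[A,B]|\le C\|A\|_{\HS}\|B\|_{\HS}$. By Lemma~\ref{lem.centered-triple}, every centered triple difference is $O(t_0\|A\|_{\HS}\|B\|_{\HS}\|C\|_{\HS})$. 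So each surviving Taylor term carries an extra factor $t_0=N^{-1/3+\epsilon}$, and without it your power counting breaks in two places.

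First, the cubic terms. Each of your small quantities has size $N^{-1/3+C\epsilon}$, so a product of three is $N^{-1+C\epsilon}$, not $N^{-4/3+C\epsilon}$ as you claim. For example, $\eta_0^3N^{-3}\bigl(\E Q_{R^2}(q_\gamma)^3-\E Q_{R^2}(q_\gamma(t_0))^3\bigr)$ is only bounded by $\eta_0^3N^{-3}\|R^2\|_{\HS}^3\sim N^{-1}$. Nothing in your argument matches third moments of quadratic forms (sixth moments of the vectors), so after summing over $\gamma$ this is $O(1)$. The paper gets $t_0N^{-1}$ here from Lemma~\ref{lem.centered-triple}.

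Second, the quadratic terms. You need to go from $N^{-2/3}$ per column to $N^{-7/6}$, a gain of $N^{1/2}$ from the self-consistent equation. The paper's self-consistent equation fixes $\gamma$ and removes a second column $\ell$, with $K_\gamma$ independent of $\ell$. It then uses $z_1D-d_0(z_1)^{-1}D\Sigma=I$ so that the leading terms cancel exactly. This improves $\E[W_\gamma K_\gamma[R_1,R_2]]$ only from $N^{4/3}$ to $N^{7/6}$, a gain of $N^{1/6}$. The gain is limited by the term $K_\gamma[D,R_2]$, of size $\|D\|_{\HS}\|R_2\|_{\HS}\sim N^{1/2}N^{2/3}$, and by the $V_\ell$ and denominator-replacement errors. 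Combined with $t_0$ this gives exactly $t_0\eta_0^2N^{-2}N^{5/2}=N^{-7/6}$ per column. Without $t_0$ it gives $N^{-5/6}$, which sums to $N^{1/6}$.

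Your contraction $\mathcal S\le c\,\mathcal S+O(N^{-1/6+C\epsilon})$ gives no mechanism for the missing $N^{1/3}$. $R-\Pi$ is not small in Hilbert--Schmidt norm: $\|R\|_{\HS}\sim N^{2/3}$ and $\|R^2\|_{\HS}\sim N^{4/3}$ come from the edge eigenvalues, while $\|\Pi\|_{\HS}\sim N^{1/2}$. So the ``deterministic leading part'' is not what dominates. Summing over $\gamma$ first also plays no role: the equation closes at fixed $\gamma$ through the auxiliary sum over $\ell$, and summing over $\gamma$ would mix different cumulant tensors, since the columns are not identically distributed.

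To repair the argument, exploit the interpolation structure as in Lemmas~\ref{lem.moment-identities}--\ref{lem.centered-triple} to extract $t_0$. Then prove the $N^{1/6}$ gain of Lemma~\ref{lem.weighted-cumulant} at fixed $\gamma$, and obtain the $R^2$ versions by Cauchy's integral formula.
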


\begin{proof}[Proof of Proposition \ref{Green function comparison thm}]
 In the sequel, we show the proof of (\ref{comparison 1}) only. The proof of (\ref{comparison 2}) is similar, and thus we omit it. Recall the definition of matrices $X_\gamma$ introduced in \eqref{eq.Guassian interpolation}. To prove \eqref{comparison 1}, we follow the strategy in \cite{BX,PY}: we replace the columns of $X$ by those of $\tilde{X}$ one by one and then estimate the one-step difference between the Green function
functionals associated with $X_\gamma$ and $X_{\gamma-1}$.

Since $X_0=X$ and $X_N=\widetilde X$, we have the
telescoping sum
\begin{align}\label{telescoping sum}
	\E F(N\eta_0\Im \tilde{m}_{2N}(z)) - \E F(N\eta_0\Im m_{2N}(z)) = \sum_{\gamma = 1}^N \E F(N\eta_0\Im m_{2N,\gamma}(z)) - \E F(N\eta_0\Im m_{2N,\gamma-1}(z)).
\end{align}

We can then complete the proof of Proposition \ref{Green function comparison thm} by establishing the following lemma. 
\begin{lemma}\label{telescoping error lemma}
Under the same condition as Proposition \ref{Green function comparison thm}, for any $\gamma \in \{1,\cdots,N \}$, we have 
\begin{align}\label{telescoping error}
	|\E F(\eta_0\Im \Tr \mathcal{G}_{2,\gamma}(z)) - \E F(\eta_0\Im \Tr \mathcal{G}_{2,\gamma-1}(z))| \lesssim N^{-7/6+C\epsilon}.
\end{align}
\end{lemma}
The proof of Lemma \ref{telescoping error lemma} is postponed to Section~\ref{sec.telescoping error lemma}. By Lemma \ref{telescoping error lemma}, the telescoping sum (\ref{telescoping sum}) can be bounded as follows
\begin{align*}
	|\E F(N\eta_0\Im \tilde{m}_{2N}(z)) - \E F(N\eta_0\Im m_{2N}(z))| \lesssim N^{-1/6 + C\epsilon},
\end{align*}
which implies (\ref{comparison 1}). Similarly, one can show (\ref{comparison 2}), which completes the proof of Proposition \ref{Green function comparison thm}.
\end{proof}

To prepare for the proof of Lemma \ref{telescoping error lemma}, we need to derive a Taylor expansion for the difference in \eqref{telescoping error}. Following the approach in \cite{BX,PY}, we choose to expand them 
around the common point $\mu_\gamma$, where $\mu_\gamma$ is defined by
\begin{align*}
 \mu_\gamma
 := \eta_0\Im\Tr\mathcal G_{1,\gamma}^{(\gamma)}(z) + \eta_0(M-N)\Im z^{-1}.
\end{align*}
Note that $X_\gamma$ and $X_{\gamma-1}$ share the same minor
$X_\gamma^{(\gamma)}(=X_{\gamma-1}^{(\gamma)})$. Using the Sherman Morrison formula and taking traces, we have
\begin{align*}
 \Tr\mathcal G_{2,\gamma}(z)
 &= \Tr\mathcal G_{1,\gamma}^{(\gamma)}(z)+\frac{M-N}{z}
    -\frac{
      \bigl\langle \tilde x_\gamma,
      \bigl(\mathcal G_{1,\gamma}^{(\gamma)}(z)\bigr)^2
      \tilde x_\gamma\bigr\rangle
    }{
      1+\bigl\langle \tilde x_\gamma,
      \mathcal G_{1,\gamma}^{(\gamma)}(z)
      \tilde x_\gamma\bigr\rangle
    },\\
 \Tr\mathcal G_{2,\gamma-1}(z)
 &= \Tr\mathcal G_{1,\gamma}^{(\gamma)}(z)+\frac{M-N}{z}
    -\frac{
      \bigl\langle x_\gamma,
      \bigl(\mathcal G_{1,\gamma}^{(\gamma)}(z)\bigr)^2
      x_\gamma\bigr\rangle
    }{
      1+\bigl\langle x_\gamma,
      \mathcal G_{1,\gamma}^{(\gamma)}(z)
      x_\gamma\bigr\rangle
    }.
\end{align*}
Therefore, for $v_\gamma\in\{x_\gamma,\tilde x_\gamma\}$, by defining
\begin{align*}
 d_\gamma
 &:=1+N^{-1}\Tr\Sigma\mathcal G_{1,\gamma}^{(\gamma)}(z),
 \qquad
 \xi(v_\gamma):=\bigl\langle v_\gamma,
 \mathcal G_{1,\gamma}^{(\gamma)}(z)v_\gamma\bigr\rangle
 -N^{-1}\Tr\Sigma\mathcal G_{1,\gamma}^{(\gamma)}(z),\\
 a(v_\gamma)
 &:=\bigl\langle v_\gamma,
 \bigl(\mathcal G_{1,\gamma}^{(\gamma)}(z)\bigr)^2v_\gamma
 \bigr\rangle,
 \qquad
 \Delta(v_\gamma)
 :=-\eta_0\Im\frac{a(v_\gamma)}{d_\gamma+\xi(v_\gamma)}
 =-\eta_0\Im
 \frac{
 \bigl\langle v_\gamma,
 \bigl(\mathcal G_{1,\gamma}^{(\gamma)}(z)\bigr)^2v_\gamma
 \bigr\rangle
 }{
 1+\bigl\langle v_\gamma,
 \mathcal G_{1,\gamma}^{(\gamma)}(z)v_\gamma
 \bigr\rangle
 }.
\end{align*}
we can rewrite the difference in \eqref{telescoping error} as
\begin{align}
 &\E F\bigl(\eta_0\Im\Tr\mathcal G_{2,\gamma}(z)\bigr)
 -\E F\bigl(\eta_0\Im\Tr\mathcal G_{2,\gamma-1}(z)\bigr)
 \notag\\
 &\quad=
 \E F\bigl(\mu_\gamma+\Delta(\tilde x_\gamma)\bigr)
 -\E F\bigl(\mu_\gamma+\Delta(x_\gamma)\bigr)
 \notag\\
 &\quad=
 \E\bigl[
 F\bigl(\mu_\gamma+\Delta(\tilde x_\gamma)\bigr)
 -F(\mu_\gamma)
 \bigr]
 -\E\bigl[
 F\bigl(\mu_\gamma+\Delta(x_\gamma)\bigr)
 -F(\mu_\gamma)
 \bigr].
 \label{eq.taylordifference}
\end{align}

To determine the contribution of $\Delta(v_\gamma)$ to the Taylor expansion of $F$, we next expand it in powers of $\xi(v_\gamma)$. Using the identity
\begin{align*}
 \frac{1}{d+\xi}
 =d^{-1}-d^{-2}\xi+d^{-3}\xi^2
  -d^{-3}(d+\xi)^{-1}\xi^3,
\end{align*}
and defining the real random variables
\begin{align*}
 J_k(v_\gamma)
 :=\Im\left[d^{-k-1}a(v_\gamma)\xi(v_\gamma)^k\right],
 \qquad k=0,1,2,
\end{align*}
With the bounds on Lemma~\ref{lem:res}, we get
\begin{align}\label{eq. delta decomposition}
 \Delta(v_\gamma)
 =-\eta_0J_0(v_\gamma)
   +\eta_0J_1(v_\gamma)
   -\eta_0J_2(v_\gamma)
   +O_{\prec}(N^{-4/3+C\epsilon}),
\end{align}
where
\begin{align*}
 \eta_0J_k(v_\gamma) \prec N^{-(k+1)/3+C\epsilon},
 \qquad
 \Delta(v_\gamma)
\prec N^{-1/3+C\epsilon}.
\end{align*}
Hence, substituting \eqref{eq. delta decomposition} into the third-order Taylor expansion of $F$ at $\mu_\gamma$ and using $J_k$'s bound, we obtain
\begin{align}
 &F\bigl(\mu_\gamma+\Delta(v_\gamma)\bigr)-F(\mu_\gamma)
 \notag\\
 &\quad=
 -\eta_0F'(\mu_\gamma)J_0(v_\gamma)
 +\eta_0F'(\mu_\gamma)J_1(v_\gamma)
 -\eta_0F'(\mu_\gamma)J_2(v_\gamma)
 \notag\\
 &\qquad
 +\frac{\eta_0^2}{2}F''(\mu_\gamma)J_0(v_\gamma)^2
 -\eta_0^2F''(\mu_\gamma)J_0(v_\gamma)J_1(v_\gamma)
 \notag\\
 &\qquad
 -\frac{\eta_0^3}{6}F'''(\mu_\gamma)J_0(v_\gamma)^3
 +O_\prec\bigl(N^{-4/3+C\epsilon}\bigr).
 \label{eq.functional-expansion}
\end{align}
Consequently, by replacing $v_\gamma$ with $\tilde x_\gamma$ and $x_\gamma$ respectively in \eqref{eq.functional-expansion}, and taking the results back into \eqref{eq.taylordifference}, we get
\begin{align}
 \E F\bigl(\eta_0\Im\Tr\mathcal G_{2,\gamma}(z)\bigr)
 -\E F\bigl(\eta_0\Im\Tr\mathcal G_{2,\gamma-1}(z)\bigr)=
 \sum_{k=1}^{6}\mathcal T_{k,\gamma}
 +O\bigl(N^{-4/3+C\epsilon}\bigr),
 \label{eq.six-term-comparison}
\end{align}
where
\begin{equation}\label{eq.tauk defi}
\begin{aligned}
 \mathcal T_{1,\gamma}
 &=-\eta_0\E\left[
 F'(\mu_\gamma)
 \bigl(J_0(\tilde x_\gamma)-J_0(x_\gamma)\bigr)
 \right],\\
 \mathcal T_{2,\gamma}
 &=\eta_0\E\left[
 F'(\mu_\gamma)
 \bigl(J_1(\tilde x_\gamma)-J_1(x_\gamma)\bigr)
 \right],\\
 \mathcal T_{3,\gamma}
 &=-\eta_0\E\left[
 F'(\mu_\gamma)
 \bigl(J_2(\tilde x_\gamma)-J_2(x_\gamma)\bigr)
 \right],\\
 \mathcal T_{4,\gamma}
 &=\frac{\eta_0^2}{2}\E\left[
 F''(\mu_\gamma)
 \bigl(J_0(\tilde x_\gamma)^2-J_0(x_\gamma)^2\bigr)
 \right],\\
 \mathcal T_{5,\gamma}
 &=-\eta_0^2\E\left[
 F''(\mu_\gamma)
 \bigl(J_0(\tilde x_\gamma)J_1(\tilde x_\gamma)
       -J_0(x_\gamma)J_1(x_\gamma)\bigr)
 \right],\\
 \mathcal T_{6,\gamma}
 &=-\frac{\eta_0^3}{6}\E\left[
 F'''(\mu_\gamma)
 \bigl(J_0(\tilde x_\gamma)^3-J_0(x_\gamma)^3\bigr)
 \right].
\end{aligned}
\end{equation}
By \eqref{eq.six-term-comparison}, the proof of Lemma~\ref{telescoping error lemma} reduces to estimating the six terms $\mathcal T_{k,\gamma}$, $k=1,\ldots,6$, uniformly in $\gamma$. In the next section, we develop the tools required to estimate the terms $\mathcal T_{k,\gamma}$ .

\subsection{Analysis of the Difference Terms}
\label{sec.comparison-term-analysis}
In this section, we reformulate the difference terms $\mathcal T_{k,\gamma}$ defined in \eqref{eq.tauk defi}. Rather than estimating them directly, we decompose them into two manageable forms. We begin by analyzing the difference structures of $J_k(\tilde x_\gamma)-J_k(x_\gamma)$. 

Using $\Im z=(z-\overline z)/(2\mathrm i)$, we set
$z_+=z$ and $z_-=\overline z$. For $\sigma\in\{+1,-1\}$ and
$v_\gamma\in\{x_\gamma,\tilde x_\gamma\}$, we define
\begin{equation*}
\begin{gathered}
 d_{\gamma,\sigma}
 =1+N^{-1}\Tr\Sigma\mathcal G_{1,\gamma}^{(\gamma)}(z_\sigma),
 \quad a_\sigma(v_\gamma)
 =\bigl\langle v_\gamma,
 \bigl(\mathcal G_{1,\gamma}^{(\gamma)}(z_\sigma)\bigr)^2
 v_\gamma\bigr\rangle,
 \\
 \xi_\sigma(v_\gamma)
 =\bigl\langle v_\gamma,
 \mathcal G_{1,\gamma}^{(\gamma)}(z_\sigma)v_\gamma
 \bigr\rangle
 -N^{-1}\Tr\Sigma\mathcal G_{1,\gamma}^{(\gamma)}(z_\sigma),
 \quad
 h_{k,\gamma,\sigma}
 =\frac{\sigma}{2\mathrm i}\,d_{\gamma,\sigma}^{-k-1}.
\end{gathered}
\end{equation*}
While
$d_{\gamma,-}=\overline{d_{\gamma,+}}$,
$a_-(v_\gamma)=\overline{a_+(v_\gamma)}$ and
$\xi_-(v_\gamma)=\overline{\xi_+(v_\gamma)}$,
the coefficients $h_{k,\gamma,\sigma}$ allow us to rewrite $J_k$ as
\begin{align*}
 J_k(v_\gamma)
 =\sum_{\sigma=\pm1}
 h_{k,\gamma,\sigma}
 a_\sigma(v_\gamma)\xi_\sigma(v_\gamma)^k,
 \qquad k=0,1,2.
 %\label{eq:Jsign}
\end{align*}

Let $\E_\gamma$ denote expectation on $(x_\gamma,x_\gamma^w)$. Since $h_{k,\gamma,\sigma}$ is fixed under $\E_\gamma$, the conditional expectation of $J_k(\tilde x_\gamma)-J_k(x_\gamma)$ reduce to those involving $a_\sigma(\tilde x)\xi_\sigma(\tilde x)^k - a_\sigma(x_\gamma)\xi_\sigma(x_\gamma)^k$. To analyze these differences, we now state the moment identities and comparison estimates at the second, fourth and sixth order given in Lemma~\ref{lem.moment-identities} and \ref{lem.centered-triple}. For a deterministic symmetric matrix
$A\in\mathbb{C}^{M\times M}$, define
\begin{align*}
 Q_A(q):=q^TAq-\Tr(\Sigma A),
 \qquad Q_A^0(g):=g^TAg-\Tr A.
\end{align*}
Recall that $q_\gamma=\sqrt N x_\gamma$ and
$q_\gamma(t)=\sqrt N x_\gamma(t)=S_tq_\gamma+\sqrt t\,g_\gamma$,
where $S_t=(I_M-t\Sigma^{-1})^{1/2}$. In this section we take
$0\le t\le t_*:=\tau/2$, with $\tau$ from Assumption~\ref{assu.population}.
By covariance matching and Assumption~\ref{assu.1.1},
we have
\begin{align*}
\E_\gamma Q_A(q_\gamma)
=\E_\gamma Q_A(q_\gamma(t))=0,
\qquad
\bigl\|Q_A(q_\gamma)\bigr\|_p
+\bigl\|Q_A(q_\gamma(t))\bigr\|_p
\le C_p\|A\|_{\HS}.
\end{align*}
For each column $\gamma$ and symmetric matrices
$A,B\in\mathbb C^{M\times M}$ held fixed under $\E_\gamma$, define the fourth-cumulant quantity of $q_\gamma$
\begin{align*}
K_\gamma^0[A,B]
:=&\operatorname{Cov}_\gamma
\bigl(q_\gamma^TAq_\gamma,q_\gamma^TBq_\gamma\bigr)
-\operatorname{Cov}_\gamma
\bigl(
g_\gamma^T\Sigma^{1/2}A\Sigma^{1/2}g_\gamma,\,
g_\gamma^T\Sigma^{1/2}B\Sigma^{1/2}g_\gamma
\bigr)\\
=&\E_\gamma\bigl[Q_A(q_\gamma)Q_B(q_\gamma)\bigr]
-2\Tr(\Sigma A\Sigma B).
\end{align*}
which measures the deviation from the Gaussian covariance of the quadratic forms associated with $A$ and $B$. Assumption~\ref{assu.1.1} and the boundedness of $\Sigma$ imply $|K_\gamma^0[A,B]|\le C\|A\|_{\HS}\|B\|_{\HS}$.
To track the change in fourth-cumulants quantity along the interpolation, we further define another fourth-cumulant difference needed below, for $t > 0$
\begin{align*}
 K_{\gamma,t}[A,B]:=\sum_{a,b,c,d}(K_{\gamma,t})_{abcd}A_{ab}B_{cd},
 \quad
 (K_{\gamma,t})_{abcd}
 :=\frac1t\left\{
 \kappa_4(q_\gamma)_{abcd}
 -\kappa_4(q_\gamma(t))_{abcd}\right\},
\end{align*}
At $t=0$, we set $K_{\gamma,0} =0$. Here $\kappa_4(v)$ is denoted as
\begin{align*}
 \kappa_4(v)_{abcd}
 :=\E[v_av_bv_cv_d]
 -\Sigma_{ab}\Sigma_{cd}
 -\Sigma_{ac}\Sigma_{bd}
 -\Sigma_{ad}\Sigma_{bc}
\end{align*}
for a centered vector $v$ with covariance $\Sigma$. Indeed, set $\mathcal B_t(A)=S_tAS_t$, we have
\begin{align*}
 K_{\gamma,t}[A,B]
 =\frac{K_\gamma^0[\mathcal B_0(A),\mathcal B_0(B)]
       -K_\gamma^0[\mathcal B_t(A),\mathcal B_t(B)]}{t}.
\end{align*}
In the rest of the proof we write $K_\gamma:=K_{\gamma,t_0}$. These fourth-cumulant quantities play a crucial role in our proof, and we will discuss it in
Section~\ref{sec.weighted-cumulant}.

\begin{lemma}
\label{lem.moment-identities}
Let $A,B$ be complex symmetric matrices held fixed under
$\E_\gamma$, we have
\begin{align}
 \E_\gamma\left[
 \langle x_\gamma,Ax_\gamma\rangle
 -\langle\tilde x_\gamma,A\tilde x_\gamma\rangle
 \right]=0,
 \label{eq.second-moment}
\end{align}
and, for every $t\in[0,t_*]$,
\begin{align}
 \E_\gamma\left[
 Q_A(q_\gamma)Q_B(q_\gamma)
 -Q_A(q_\gamma(t))Q_B(q_\gamma(t))
 \right]=tK_{\gamma,t}[A,B].
 \label{eq.fourth-moment}
\end{align}
In particular, since $q_\gamma(t_0)=\sqrt N\,\tilde x_\gamma$, we obtain
\begin{align}
 \E_\gamma\left[
 \langle x_\gamma,Ax_\gamma\rangle
 \langle x_\gamma,Bx_\gamma\rangle
 -\langle\tilde x_\gamma,A\tilde x_\gamma\rangle
 \langle\tilde x_\gamma,B\tilde x_\gamma\rangle
 \right]=t_0N^{-2}K_\gamma[A,B].
 \label{eq.scaled-fourth-moment}
\end{align}
\end{lemma}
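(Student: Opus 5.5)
The three identities follow from direct computation under $\E_\gamma$. We use only two inputs: covariance matching, and the independence of $q_\gamma$ and $g_\gamma$, with $g_\gamma$ standard Gaussian. Throughout, $A,B$ are held fixed under $\E_\gamma$, so all expectations act only on $(q_\gamma,g_\gamma)$.

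For \eqref{eq.second-moment}, write $\langle v,Av\rangle=\Tr(A\,vv^T)$. Then use $\E_\gamma x_\gamma x_\gamma^T=\Sigma/N$ together with the identity $\E_\gamma\tilde x_\gamma\tilde x_\gamma^T=N^{-1}(S_{t_0}\Sigma S_{t_0}+t_0I)=\Sigma/N$, established in Lemma~\ref{lem:interpolation-stability}. Both expectations equal $N^{-1}\Tr(\Sigma A)$.

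For \eqref{eq.fourth-moment}, since $\E_\gamma Q_A=0$ for both vectors, the left side equals the difference of covariances of the quadratic forms. I will expand $Q_A(q)Q_B(q)$ in coordinates:
\[
\E[Q_AQ_B]=\sum_{abcd}A_{ab}B_{cd}\bigl(\E[q_aq_bq_cq_d]-\Sigma_{ab}\Sigma_{cd}\bigr).
\]
By the definition of $\kappa_4$ and the symmetry of $A,B$, this equals
\[
\sum_{abcd}\kappa_4(q)_{abcd}A_{ab}B_{cd}+2\Tr(\Sigma A\Sigma B).
\]
The identical formula holds for $q_\gamma(t)$, because it is centered with covariance $\Sigma$. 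The Gaussian term $2\Tr(\Sigma A\Sigma B)$ depends only on $\Sigma$ and cancels in the difference. What remains is
\[
\sum_{abcd}\bigl(\kappa_4(q_\gamma)-\kappa_4(q_\gamma(t))\bigr)_{abcd}A_{ab}B_{cd}=tK_{\gamma,t}[A,B]
\]
by definition; for $t=0$ both sides vanish. Note that the fourth moment of $q_\gamma(t)$ is finite by Lemma~\ref{lem:interpolation-stability}, so every term is well defined.

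As a consistency check, I can also compute $\kappa_4(q(t))$ directly. By independence, and since Gaussian cumulants of order four vanish and cumulants are additive for independent summands, $\kappa_4(S_tq+\sqrt t g)=\kappa_4(S_tq)$. This is the fourth cumulant of $q$ contracted with $S_t$ in each index. It matches the alternative formula via $\mathcal B_t$ and $K_\gamma^0$, but it is not needed for the identity itself.

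For \eqref{eq.scaled-fourth-moment}, write
\[
\langle x,Ax\rangle=N^{-1}\bigl(Q_A(q)+\Tr(\Sigma A)\bigr),
\]
and similarly for $\tilde x$ with $q_\gamma(t_0)$. Expanding the product gives a constant term $\Tr(\Sigma A)\Tr(\Sigma B)$, cross terms linear in $Q_A,Q_B$, and the product $Q_AQ_B$. The constant terms are the same for both vectors. The cross terms have zero expectation for both. So the difference equals $N^{-2}$ times the left side of \eqref{eq.fourth-moment} at $t=t_0$, which is $t_0N^{-2}K_\gamma[A,B]$.

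There is no real obstacle here. The only point needing care is the index bookkeeping in the fourth-moment expansion: the three pairings in $\kappa_4$ reduce to $\Sigma_{ab}\Sigma_{cd}$ plus $2\Tr(\Sigma A\Sigma B)$ using $A=A^T$ and $B=B^T$. Complex symmetric matrices are handled without change, since no conjugation enters.
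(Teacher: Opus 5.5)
Your proof is correct. For \eqref{eq.second-moment} and \eqref{eq.scaled-fourth-moment} it matches the paper: covariance matching for the first, and for the third the expansion around $\Tr(\Sigma A)$ and $\Tr(\Sigma B)$ followed by specialization to $t=t_0$.

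For \eqref{eq.fourth-moment} you take a different route.

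\emph{The paper's route.} It uses the decomposition $q_\gamma(t)=S_tq_\gamma+\sqrt t\,g_\gamma$, independence and the Gaussian moment formula to obtain
$\E_\gamma[Q_A(q_\gamma(t))Q_B(q_\gamma(t))]=2\Tr(\Sigma A\Sigma B)+K_\gamma^0[\mathcal B_t(A),\mathcal B_t(B)]$.
It then subtracts the case $t=0$ and invokes the representation
$tK_{\gamma,t}[A,B]=K_\gamma^0[\mathcal B_0(A),\mathcal B_0(B)]-K_\gamma^0[\mathcal B_t(A),\mathcal B_t(B)]$.

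\emph{Your route.} You expand in coordinates to get
$\E[Q_AQ_B]=\sum_{a,b,c,d}\kappa_4(\cdot)_{abcd}A_{ab}B_{cd}+2\Tr(\Sigma A\Sigma B)$.
This holds for any centered vector with covariance $\Sigma$ and finite fourth moments. The identity then follows at once from the tensor definition of $K_{\gamma,t}$, and the Gaussian structure of the perturbation is never used beyond covariance matching.

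\emph{Comparison.} Your argument is shorter and more general. The paper's version writes the difference through $K_\gamma^0$ of the original column, which is the form it exploits later in Lemma~\ref{lem.K-HS-bound}, where it differentiates $s\mapsto K_\gamma^0[\mathcal B_s(A),\mathcal B_s(B)]$.

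Your consistency check is worth keeping. It says the genuine fourth cumulant of $q_\gamma(t)$ equals that of $S_tq_\gamma$, i.e.\ $\kappa_4(q_\gamma)$ contracted with $S_t$ in each index; read $\kappa_4(S_tq_\gamma)$ as the genuine cumulant, whose pairings use $\Sigma-tI_M$ rather than $\Sigma$. This is exactly what shows the two expressions for $K_{\gamma,t}$ agree, and the paper asserts that agreement without a separate argument.
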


\begin{proof}[Proof of Lemma \ref{lem.moment-identities}]
The first identity follows from covariance matching; that is,
\begin{align*}
 \E_\gamma\left[\langle x_\gamma,Ax_\gamma\rangle\right]
 =\E_\gamma\left[\langle\tilde x_\gamma,A\tilde x_\gamma\rangle\right]
 =N^{-1}\Tr(\Sigma A).
\end{align*}
Moreover, by independence and the Gaussian moment formula,
\begin{align*}
 \E_\gamma[Q_A(q_\gamma(t))Q_B(q_\gamma(t))]
 =2\Tr(\Sigma A\Sigma B)
  + K_\gamma^0[\mathcal B_t(A),\mathcal B_t(B)].
\end{align*}
Subtracting this identity from its value at $t=0$ proves the second
identity. The last identity follows by taking $t=t_0$.
\end{proof}

\begin{lemma}
\label{lem.centered-triple}
Let $A,B,C$ be complex symmetric matrices held fixed under
$\E_\gamma$. For $t\in[0,t_*]$, define
\begin{align}
 C_{\gamma,t}(A,B,C)
:=\E_\gamma\bigl[Q_A(q_\gamma)Q_B(q_\gamma)Q_C(q_\gamma)\bigr]
 -\E_\gamma\bigl[
 Q_A(q_\gamma(t))Q_B(q_\gamma(t))Q_C(q_\gamma(t))
 \bigr].
 \label{eq.centered-triple-difference}
\end{align}
We have
\begin{align}
 |C_{\gamma,t}(A,B,C)|
 \le Ct\|A\|_{\HS}\|B\|_{\HS}\|C\|_{\HS}.
 \label{eq.centered-triple-bound}
\end{align}
\end{lemma}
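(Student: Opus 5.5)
We expand $q_\gamma(t)=S_tq+\sqrt t\,g$ (dropping the index $\gamma$) and write the centered quadratic forms of $q(t)$ in terms of $q$ and $g$. For a symmetric $A$ held fixed under $\E_\gamma$,
\[
Q_A(q(t))=Q_{\mathcal B_t(A)}(q)+t\,Q^0_A(g)+2\sqrt t\,q^TS_tAg,
\]
where the trace terms combine exactly, since $\Tr(\Sigma S_tAS_t)+t\Tr A=\Tr(\Sigma A)$ by $S_t\Sigma S_t+tI=\Sigma$. Write $L_A:=2q^TS_tAg$. Then
\[
Q_A(q(t))=Q_A(q)+\bigl[Q_{\mathcal B_t(A)}(q)-Q_A(q)\bigr]+tQ^0_A(g)+\sqrt t\,L_A .
\]
Expanding the triple product $Q_A(q(t))Q_B(q(t))Q_C(q(t))$ and subtracting $Q_AQ_BQ_C(q)$ leaves a sum of terms. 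We would show each of them is $O(t\|A\|_\HS\|B\|_\HS\|C\|_\HS)$.

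\textbf{Key steps.}

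\emph{Deterministic perturbation terms.} Since $S_t-I=O(t)$ in operator norm, for $t\le t_*$ we have
\[
\|\mathcal B_t(A)-A\|_\HS\le Ct\|A\|_\HS .
\]
By linearity $Q_{\mathcal B_t(A)}(q)-Q_A(q)=Q_{\mathcal B_t(A)-A}(q)$. Hölder's inequality together with \eqref{eq:quadratic-form-moments} (applied to $q$, with $p=3$) then bounds every term containing at least one such factor by $Ct\prod\|\cdot\|_\HS$.

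\emph{Terms with a factor $tQ^0(g)$.} These already carry a factor $t$. Gaussian quadratic-form concentration, $\|Q^0_A(g)\|_p\le C_p\|A\|_\HS$, combined with Hölder gives the bound $Ct\prod\|\cdot\|_\HS$.

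\emph{Terms with a factor $\sqrt t L$.} Terms containing two or three $L$ factors carry at least a factor $t$. We would bound them using the estimate from the proof of Lemma~\ref{lem:interpolation-stability}: conditioning on $q$,
\[
\|L_A\|_p\le C_p\bigl\|\,|AS_tq|\,\bigr\|_p\le C\|A\|_\HS ,
\]
where the last step uses $\||Dq|\|_p^2\le C_p\|D\|_\HS^2$.

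The only dangerous terms are those containing exactly one factor $\sqrt t\,L$, which naively give only $\sqrt t$. In these the remaining two factors are quadratic forms in $q$ and $g$ separately, with no further $L$. Such a term is an odd polynomial of degree $1$ in $g$, times an even function of $g$ (possibly $Q^0(g)$), times functions of $q$. Since $g$ is independent of $q$ and symmetric, conditioning on $q$ and integrating over $g$ kills it: the expectation is exactly zero. The same parity argument disposes of terms with three $L$'s, which are odd in $g$; terms with two $L$'s are even in $g$ but are already $O(t)$.

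\textbf{Main obstacle.} The main point is recognizing that the order-$\sqrt t$ terms vanish by the $g\mapsto-g$ symmetry. After that the proof reduces to bookkeeping: carrying out the expansion correctly and verifying the $\HS$-norm moment bounds for the mixed forms $L_A$. Finally, complex symmetric $A,B,C$ are handled through their real and imaginary parts, exactly as in Lemma~\ref{lem:interpolation-stability}.
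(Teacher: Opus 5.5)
Your proof is correct. It reaches the bound by a somewhat different route from the paper's, although the ingredients overlap.

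\textbf{What is shared.} You and the paper start from the same decomposition $Q_A(q(t))=Q_{\mathcal B_t(A)}(q)+tQ^0_A(g)+2\sqrt t\,q^TS_tAg$. Both arguments use the $g\mapsto-g$ parity to kill the terms with an odd number of cross factors.

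\textbf{How the paper proceeds.} The paper integrates out $g$ exactly with the Gaussian moment formulas. This gives a closed expression for $P(s)=\E_\gamma[Q_A(q(s))Q_B(q(s))Q_C(q(s))]$ as a sum of $RRR$, $RLL$, $GLL$ and $GGG$ contributions. It then bounds $\sup_s|P'(s)|$ using the derivatives of $\mathcal B_s$ and $\mathcal D_s$ and the identity $S_s\Sigma S_s=\Sigma-sI_M$. It concludes from $P(0)-P(t)=-\int_0^tP'(s)\,\mathrm ds$.

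\textbf{How you proceed.} You compare time $t$ with time $0$ directly, without computing any Gaussian expectation or derivative. The factor $t$ comes from three sources:
\begin{itemize}
\item the $q$-part, through $\|S_t-I_M\|_{\op}\le t/\tau$ and hence $\|\mathcal B_t(A)-A\|_{\HS}\le Ct\|A\|_{\HS}$;
\item the explicit prefactor $t$ in the noise term;
\item the product $(\sqrt t)^2$ in the two-cross-factor terms.
\end{itemize}
H\"older's inequality, with the $L^3$ bounds on $Q(q)$, $Q^0(g)$ and $L$, then controls every term.

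\textbf{What each buys.} Your version is shorter. It needs less about $g$: only its symmetry and moment bounds for its linear and quadratic forms, not exact Wick identities. So it would apply verbatim to any symmetric isotropic noise vector satisfying \eqref{eq:quadratic-form-moments}. The paper's computation gives more than the bound requires: an explicit formula for $P(s)$, from which the order-$t$ coefficient $P'(0)$ can be read off.
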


\begin{proof}[Proof of Lemma \ref{lem.centered-triple}]
Set
\begin{align*}
 P(s):=\E_\gamma\bigl[
 Q_A(q_\gamma(s))Q_B(q_\gamma(s))Q_C(q_\gamma(s))
 \bigr],\qquad 0\le s\le t_* \le 1.
\end{align*}
We have $C_{\gamma,t}(A,B,C)=P(0)-P(t)$.
To bound the derivative of $P$, define
\begin{align*}
 R_A(s):=Q_{\mathcal B_s(A)}(q_\gamma),\qquad
 G_A:=g_\gamma^TAg_\gamma-\Tr A,\qquad
 L_A(s):=q_\gamma^TS_sAg_\gamma.
\end{align*}
Thus
\[
 Q_A(q_\gamma(s))=R_A(s)+sG_A+2\sqrt{s}\,L_A(s).
\]
Expanding the product in $P(s)$, terms containing an odd
number of factors $L$ vanish by Gaussian symmetry.
Among the terms with no factors $L$, those containing one or two
factors $G$ vanish by independence and centering.
Thus only terms of the forms $RRR$, $RLL$, $GLL$, and $GGG$ survive,
and evaluating their Gaussian expectations gives
\begin{align*}
 P(s)
 ={}&\E_\gamma[R_A(s)R_B(s)R_C(s)]
 +4s\sum_{\mathrm{cyc}}
   \E_\gamma\bigl[
     R_A(s)Q_{\mathcal D_s(B,C)}(q_\gamma)\bigr]
 \\
 &+8s^2\sum_{\mathrm{cyc}}
    \Tr\bigl(\Sigma S_sBAC S_s\bigr)
 +8s^3\Tr(ABC),
\end{align*}
where the cyclic sum runs over $(A,B,C),(B,C,A),(C,A,B)$ and
\[
 \mathcal D_s(B,C):=\tfrac12S_s(BC+CB)S_s.
\]
Here we used
\begin{align*}
 \E_g[L_B(s)L_C(s)]
 &=q_\gamma^T\mathcal D_s(B,C)q_\gamma,\\
 \E_g[G_A L_B(s)L_C(s)]
 &=2q_\gamma^TS_sBAC S_sq_\gamma,\\
 \E_g[G_AG_BG_C]
 &=8\Tr(ABC),
\end{align*}
together with $\E_\gamma R_A(s)=0$ and
$\E_\gamma q_\gamma q_\gamma^T=\Sigma$.

Since $S_s=(I_M-s\Sigma^{-1})^{1/2}$, we have $S_s'=\frac{\mathrm d}{\mathrm ds}S_s
 =-\frac12\Sigma^{-1}S_s^{-1}$. Differentiating $\mathcal B_s(A)=S_sAS_s$ and
$\mathcal D_s(B,C)=\tfrac12S_s(BC+CB)S_s$ gives
\begin{align*}
 \mathcal B_s'(A)
 &=S_s'AS_s+S_sAS_s',\\
 \mathcal D_s'(B,C)
 &=\tfrac12S_s'(BC+CB)S_s
   +\tfrac12S_s(BC+CB)S_s'.
\end{align*}
The uniform bounds on $S_s$ and $S_s'$ therefore imply
\begin{align*}
 \|\mathcal B_s(A)\|_{\HS}
 +\|\mathcal B_s'(A)\|_{\HS}
 &\le C\|A\|_{\HS},\\
 \|\mathcal D_s(B,C)\|_{\HS}
 +\|\mathcal D_s'(B,C)\|_{\HS}
 &\le C\|B\|_{\HS}\|C\|_{\HS}.
\end{align*}
By H\"older's inequality and \eqref{eq:quadratic-form-moments}
with $p=3$, the derivative of the first term in $P(s)$ is bounded by
$C\|A\|_{\HS}\|B\|_{\HS}\|C\|_{\HS}$.
The second term satisfies the same derivative bound by
\eqref{eq:quadratic-form-moments} with $p=2$.

For the remaining terms, note that
$S_s\Sigma S_s=\Sigma-sI_M$, we have
\[
 \Tr(\Sigma S_sBAC S_s)
 =\Tr(S_s \Sigma S_s BAC )
 =\Tr((\Sigma-sI_M)BAC).
\]
The boundedness of $\Sigma$ therefore bounds this expression and its derivative, as well as
$\Tr(ABC)$, by
$C\|A\|_{\HS}\|B\|_{\HS}\|C\|_{\HS}$.
Thus, we have
\[
 \sup_{s\in[0,t_*]}|P'(s)|
 \le C\|A\|_{\HS}\|B\|_{\HS}\|C\|_{\HS}.
\]
Consequently,
\[
 |C_{\gamma,t}(A,B,C)|
 =|P(0)-P(t)|
 =
\left|\int_0^t P'(s)\,\mathrm ds\right|
 \le t\sup_{s\in[0,t]}|P'(s)|
 \le Ct\|A\|_{\HS}\|B\|_{\HS}\|C\|_{\HS},
\]
which proves \eqref{eq.centered-triple-bound}.
\end{proof}

For notational convenience, we set $R_{\gamma,\sigma}:=\mathcal G_{1,\gamma}^{(\gamma)}(z_\sigma)$. Combining with Lemma~\ref{lem.moment-identities} and \eqref{eq.centered-triple-difference}, we get the following identities.

\begin{proposition}\label{prop.momequation}
For $\sigma,\rho,\nu\in\{+1,-1\}$, we have
\begin{equation*}
\begin{aligned}
\E_\gamma\bigl[
a_\sigma(x_\gamma)-a_\sigma(\tilde x_\gamma)
\bigr]
&=0,
\\
\E_\gamma\bigl[
a_\sigma(x_\gamma)\xi_\sigma(x_\gamma)
-a_\sigma(\tilde x_\gamma)\xi_\sigma(\tilde x_\gamma)
\bigr]
&=t_0N^{-2}
K_\gamma[R_{\gamma,\sigma}^2,R_{\gamma,\sigma}],
\\
\E_\gamma\bigl[
a_\sigma(x_\gamma)\xi_\sigma(x_\gamma)^2
-a_\sigma(\tilde x_\gamma)\xi_\sigma(\tilde x_\gamma)^2
\bigr]
&=N^{-3}C_{\gamma,t_0}
(R_{\gamma,\sigma}^2,R_{\gamma,\sigma},R_{\gamma,\sigma})
\\
&\phantom{={}}+t_0N^{-3}
(\Tr\Sigma R_{\gamma,\sigma}^2)
K_\gamma[R_{\gamma,\sigma},R_{\gamma,\sigma}],
\\
\E_\gamma\bigl[
a_\sigma(x_\gamma)a_\rho(x_\gamma)
-a_\sigma(\tilde x_\gamma)a_\rho(\tilde x_\gamma)
\bigr]
&=t_0N^{-2}
K_\gamma[R_{\gamma,\sigma}^2,R_{\gamma,\rho}^2],
\\
\E_\gamma\bigl[
a_\sigma(x_\gamma)a_\rho(x_\gamma)\xi_\rho(x_\gamma)
-a_\sigma(\tilde x_\gamma)a_\rho(\tilde x_\gamma)
 \xi_\rho(\tilde x_\gamma)
\bigr]
&=N^{-3}C_{\gamma,t_0}
(R_{\gamma,\sigma}^2,R_{\gamma,\rho}^2,R_{\gamma,\rho})
\\
&\phantom{={}}+t_0N^{-3}\Bigl[
(\Tr\Sigma R_{\gamma,\sigma}^2)
K_\gamma[R_{\gamma,\rho}^2,R_{\gamma,\rho}]
\\
&\phantom{={+t_0N^{-3}\Bigl[}}
+(\Tr\Sigma R_{\gamma,\rho}^2)
K_\gamma[R_{\gamma,\sigma}^2,R_{\gamma,\rho}]
\Bigr],
\\
\E_\gamma\bigl[
a_\sigma(x_\gamma)a_\rho(x_\gamma)a_\nu(x_\gamma)
-a_\sigma(\tilde x_\gamma)a_\rho(\tilde x_\gamma)
 a_\nu(\tilde x_\gamma)
\bigr]
&=N^{-3}C_{\gamma,t_0}
(R_{\gamma,\sigma}^2,R_{\gamma,\rho}^2,R_{\gamma,\nu}^2)
\\
&\phantom{={}}+t_0N^{-3}\Bigl[
(\Tr\Sigma R_{\gamma,\sigma}^2)
K_\gamma[R_{\gamma,\rho}^2,R_{\gamma,\nu}^2]
\\
&\phantom{={+t_0N^{-3}\Bigl[}}
+(\Tr\Sigma R_{\gamma,\rho}^2)
K_\gamma[R_{\gamma,\sigma}^2,R_{\gamma,\nu}^2]
\\
&\phantom{={+t_0N^{-3}\Bigl[}}
+(\Tr\Sigma R_{\gamma,\nu}^2)
K_\gamma[R_{\gamma,\sigma}^2,R_{\gamma,\rho}^2]
\Bigr].
\end{aligned}
\end{equation*}
\end{proposition}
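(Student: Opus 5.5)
The plan is to reduce every identity to the centered quadratic forms $Q_A$ and then apply Lemma~\ref{lem.moment-identities} and Lemma~\ref{lem.centered-triple}. Fix $\gamma$ and abbreviate $R_\sigma:=R_{\gamma,\sigma}$. Under $\E_\gamma$ these matrices are deterministic complex symmetric matrices: $H_\gamma^{(\gamma)}$ is real symmetric, so $R_\sigma^T=R_\sigma$. Write $v\in\{x_\gamma,\tilde x_\gamma\}$ and $v=N^{-1/2}q$ with $q\in\{q_\gamma,q_\gamma(t_0)\}$. Then
\[
 a_\sigma(v)=N^{-1}Q_{R_\sigma^2}(q)+N^{-1}\Tr(\Sigma R_\sigma^2),
 \qquad
 \xi_\sigma(v)=N^{-1}Q_{R_\sigma}(q).
\]
The additive constant $c_\sigma:=N^{-1}\Tr(\Sigma R_\sigma^2)$ is the same for both choices of $v$, because both columns have covariance $\Sigma$ (Lemma~\ref{lem:interpolation-stability}). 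Each product in the statement therefore becomes a polynomial in the $Q$'s with coefficients built from the $c$'s, and these coefficients are fixed under $\E_\gamma$.

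Next, expand each product and take the difference between $q_\gamma$ and $q_\gamma(t_0)$ term by term, sorted by the number of $Q$ factors.
\begin{itemize}
\item Pure constant terms cancel.
\item Terms with a single $Q$ have zero expectation for both vectors, by \eqref{eq.second-moment}.
\item Terms with two $Q$'s contribute $t_0K_\gamma[\cdot,\cdot]$, by \eqref{eq.fourth-moment} with $t=t_0$.
\item Terms with three $Q$'s contribute $C_{\gamma,t_0}(\cdot,\cdot,\cdot)$, by definition \eqref{eq.centered-triple-difference}.
\end{itemize}

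Applying this to each line:
\begin{itemize}
\item For $a_\sigma$, only a single $Q$ remains, so the difference is $0$.
\item For $a_\sigma\xi_\sigma=N^{-2}Q_{R^2_\sigma}Q_{R_\sigma}+c_\sigma N^{-1}Q_{R_\sigma}$, we get $t_0N^{-2}K_\gamma[R_\sigma^2,R_\sigma]$.
\item For $a_\sigma\xi_\sigma^2=N^{-3}Q_{R_\sigma^2}Q_{R_\sigma}^2+c_\sigma N^{-2}Q_{R_\sigma}^2$, we get $N^{-3}C_{\gamma,t_0}(R_\sigma^2,R_\sigma,R_\sigma)+t_0N^{-3}(\Tr\Sigma R_\sigma^2)K_\gamma[R_\sigma,R_\sigma]$. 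Here $c_\sigma N^{-2}=N^{-3}\Tr\Sigma R_\sigma^2$.
\item For $a_\sigma a_\rho$, the cross terms $c_\rho N^{-1}Q_{R_\sigma^2}$ and $c_\sigma N^{-1}Q_{R_\rho^2}$ vanish, and the $c_\sigma c_\rho$ term cancels, leaving $t_0N^{-2}K_\gamma[R_\sigma^2,R_\rho^2]$.
\item For $a_\sigma a_\rho\xi_\rho$, expand $(N^{-1}Q_{R_\sigma^2}+c_\sigma)(N^{-1}Q_{R_\rho^2}+c_\rho)N^{-1}Q_{R_\rho}$. This gives the triple term, plus the two-$Q$ terms $c_\sigma N^{-2}Q_{R_\rho^2}Q_{R_\rho}$ and $c_\rho N^{-2}Q_{R_\sigma^2}Q_{R_\rho}$, plus a single-$Q$ term $c_\sigma c_\rho N^{-1}Q_{R_\rho}$ that vanishes.
\item For $a_\sigma a_\rho a_\nu$, the triple term appears together with three two-$Q$ terms, each weighted by the complementary constant, and the remaining terms have at most one $Q$ or are constant and cancel.
\end{itemize}
Reading off the coefficients reproduces exactly the stated formulas.

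No step is a genuine obstacle; the argument is bookkeeping. The points to be careful about are:
\begin{itemize}
\item Lemma~\ref{lem.moment-identities} applies to complex symmetric matrices, so symmetry of $R_\sigma^2$ and $R_\sigma$ is needed; this was noted above.
\item The normalization $q_\gamma(t_0)=\sqrt N\tilde x_\gamma$ must hold with the same $t_0$ that appears in $K_\gamma=K_{\gamma,t_0}$.
\item The constant pieces of $a$ must match, which follows from covariance matching, whereas $\xi$ has no constant piece by construction.
\end{itemize}
All expectations are finite by the moment bounds \eqref{eq:quadratic-form-moments} together with Hölder's inequality, so the term-by-term expansion is justified.
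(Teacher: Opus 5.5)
Your proposal is correct and matches the paper's proof: you write $a_\sigma(v_\gamma)=N^{-1}\bigl(Q_{R_{\gamma,\sigma}^2}(u_\gamma)+\Tr\Sigma R_{\gamma,\sigma}^2\bigr)$ and $\xi_\sigma(v_\gamma)=N^{-1}Q_{R_{\gamma,\sigma}}(u_\gamma)$, expand the products, and read off the two-$Q$ terms from Lemma~\ref{lem.moment-identities} and the three-$Q$ terms from the definition of $C_{\gamma,t_0}$. One small correction: the constant $N^{-1}\Tr(\Sigma R_{\gamma,\sigma}^2)$ is the same for both vectors by the definition of $Q_A$ alone, and covariance matching is what you actually need to make the single-$Q$ terms vanish.
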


\begin{proof}[Proof of Proposition \ref{prop.momequation}]
The first two identities follow from Lemma~\ref{lem.moment-identities}. We only prove the third identity in detail, since the others can be proved similarly. For $v_\gamma\in\{x_\gamma,\tilde x_\gamma\}$, write $u_\gamma=\sqrt N\,v_\gamma$. By definition, we have
\[
 a_\sigma(v_\gamma)
 =N^{-1}\bigl(
 Q_{R_{\gamma,\sigma}^2}(u_\gamma)+\Tr\Sigma R_{\gamma,\sigma}^2
 \bigr),
 \qquad
 \xi_\sigma(v_\gamma)
 =N^{-1}Q_{R_{\gamma,\sigma}}(u_\gamma).
\]
Hence,
\[
 a_\sigma(v_\gamma)\xi_\sigma(v_\gamma)^2
 =N^{-3}Q_{R_{\gamma,\sigma}^2}(u_\gamma)
          Q_{R_{\gamma,\sigma}}(u_\gamma)^2
 +N^{-3}(\Tr\Sigma R_{\gamma,\sigma}^2)
          Q_{R_{\gamma,\sigma}}(u_\gamma)^2.
\]
Using $\sqrt N\,x_\gamma=q_\gamma$ and
$\sqrt N\,\tilde x_\gamma=q_\gamma(t_0)$, we obtain
\begin{align*}
&\E_\gamma\bigl[
 a_\sigma(x_\gamma)\xi_\sigma(x_\gamma)^2
 -a_\sigma(\tilde x_\gamma)\xi_\sigma(\tilde x_\gamma)^2
 \bigr]
\\
&\quad=N^{-3}\E_\gamma\Bigl[
 Q_{R_{\gamma,\sigma}^2}(q_\gamma)
 Q_{R_{\gamma,\sigma}}(q_\gamma)^2
 -
 Q_{R_{\gamma,\sigma}^2}(q_\gamma(t_0))
 Q_{R_{\gamma,\sigma}}(q_\gamma(t_0))^2
 \Bigr]
\\
&\qquad\quad
 +N^{-3}(\Tr\Sigma R_{\gamma,\sigma}^2)\E_\gamma\Bigl[
 Q_{R_{\gamma,\sigma}}(q_\gamma)^2
 -
 Q_{R_{\gamma,\sigma}}(q_\gamma(t_0))^2
 \Bigr]
\\
&\quad=N^{-3}C_{\gamma,t_0}
 (R_{\gamma,\sigma}^2,R_{\gamma,\sigma},R_{\gamma,\sigma})
 +t_0N^{-3}(\Tr\Sigma R_{\gamma,\sigma}^2)
 K_\gamma[R_{\gamma,\sigma},R_{\gamma,\sigma}],
\end{align*}
where the last equality follows from the definition of
$C_{\gamma,t_0}$ and Lemma~\ref{lem.moment-identities}.
\end{proof}

Using Proposition~\ref{prop.momequation} and taking conditional
expectations with respect to $\E_\gamma$, we can rewrite the six terms $\mathcal T_{k,\gamma}$ in \eqref{eq.tauk defi} as
\begin{equation}\label{eq.tauk defi2}
\begin{aligned}
\mathcal T_{1,\gamma}
&=0,
\\
\mathcal T_{2,\gamma}
&=-t_0\eta_0N^{-2}
\E\Biggl[
F'(\mu_\gamma)\sum_{\sigma=\pm1}
h_{1,\gamma,\sigma}
K_\gamma[R_{\gamma,\sigma}^2,R_{\gamma,\sigma}]
\Biggr],
\\
\mathcal T_{3,\gamma}
&=\eta_0N^{-3}
\E\Biggl[
F'(\mu_\gamma)\sum_{\sigma=\pm1}
h_{2,\gamma,\sigma}
\Bigl\{
C_{\gamma,t_0}
(R_{\gamma,\sigma}^2,R_{\gamma,\sigma},R_{\gamma,\sigma})
\\
&\hspace{5em}
+t_0(\Tr\Sigma R_{\gamma,\sigma}^2)
K_\gamma[R_{\gamma,\sigma},R_{\gamma,\sigma}]
\Bigr\}
\Biggr],
\\
\mathcal T_{4,\gamma}
&=-\frac{t_0\eta_0^2}{2}N^{-2}
\E\Biggl[
F''(\mu_\gamma)\sum_{\sigma,\rho=\pm1}
h_{0,\gamma,\sigma}h_{0,\gamma,\rho}
K_\gamma[R_{\gamma,\sigma}^2,R_{\gamma,\rho}^2]
\Biggr],
\\
\mathcal T_{5,\gamma}
&=\eta_0^2N^{-3}
\E\Biggl[
F''(\mu_\gamma)\sum_{\sigma,\rho=\pm1}
h_{0,\gamma,\sigma}h_{1,\gamma,\rho}
\Bigl\{
C_{\gamma,t_0}
(R_{\gamma,\sigma}^2,R_{\gamma,\rho}^2,R_{\gamma,\rho})
\\
&\hspace{5em}
+t_0\Bigl[
(\Tr\Sigma R_{\gamma,\sigma}^2)
K_\gamma[R_{\gamma,\rho}^2,R_{\gamma,\rho}]
+(\Tr\Sigma R_{\gamma,\rho}^2)
K_\gamma[R_{\gamma,\sigma}^2,R_{\gamma,\rho}]
\Bigr]
\Bigr\}
\Biggr],
\\
\mathcal T_{6,\gamma}
&=\frac{\eta_0^3}{6}N^{-3}
\E\Biggl[
F'''(\mu_\gamma)\sum_{\sigma,\rho,\nu=\pm1}
h_{0,\gamma,\sigma}h_{0,\gamma,\rho}h_{0,\gamma,\nu}
\Bigl\{
C_{\gamma,t_0}
(R_{\gamma,\sigma}^2,R_{\gamma,\rho}^2,R_{\gamma,\nu}^2)
\\
&\hspace{5em}
+t_0\Bigl[
(\Tr\Sigma R_{\gamma,\sigma}^2)
K_\gamma[R_{\gamma,\rho}^2,R_{\gamma,\nu}^2]
+(\Tr\Sigma R_{\gamma,\rho}^2)
K_\gamma[R_{\gamma,\sigma}^2,R_{\gamma,\nu}^2]
\\
&\hspace{8em}
+(\Tr\Sigma R_{\gamma,\nu}^2)
K_\gamma[R_{\gamma,\sigma}^2,R_{\gamma,\rho}^2]
\Bigr]
\Bigr\}
\Biggr].
\end{aligned}
\end{equation}
In \eqref{eq.tauk defi2}, we denote $\mathcal T_{k,\gamma}$'s parts containing $C_{\gamma,t_0}$ and $K_\gamma$ by
$\mathcal T_{k,\gamma}^{(C)}$ and
$\mathcal T_{k,\gamma}^{(K)}$, respectively. In the next section, we will use Lemma~\ref{lem.centered-triple} to estimate $\mathcal T_{k,\gamma}^{(C)}$ and Lemma~\ref{lem.weighted-cumulant} below to estimate $\mathcal T_{k,\gamma}^{(K)}$. The proof of Lemma~\ref{lem.weighted-cumulant} is postponed to Section~\ref{sec.weighted-cumulant}.

\begin{lemma}
\label{lem.weighted-cumulant}
Fix $\gamma\in\{1,\ldots,N\}$ and let $K_\gamma=K_{\gamma,t_0}$, $R_a = \mathcal G_{1,\gamma}^{(\gamma)}(z_a)$, $a=1,2$, where $z_1,z_2$ satisfy \eqref{eq:domain1}.
Let $W_\gamma$ be a random weight measurable with respect
to the columns of $X_\gamma^{(\gamma)}$.
Assume that, for each $\ell\ne\gamma$, there exists
a weight $W_\gamma^{(\ell)}$ measurable with respect
to the columns of $X_\gamma^{(\gamma,\ell)}$ such that,
for every fixed $p\ge1$,
\begin{equation}\label{eq.weight-assumptions}
 \|W_\gamma\|_{p}
 +\|W_\gamma^{(\ell)}\|_{p}
 \le C_pN^{C_W\epsilon},
 \qquad
 \|W_\gamma-W_\gamma^{(\ell)}\|_{p}
 \le C_pN^{-1/3+C_W\epsilon},
\end{equation}
uniformly in $\gamma$ and $\ell$, where $C_W>0$
is a fixed constant.
Then, uniformly in $\gamma$, we have
\begin{align}
 \bigl|\E\bigl[
 W_\gamma K_\gamma[R_1,R_2]
 \bigr]\bigr|
 &\le CN^{7/6+C\epsilon},
 \label{eq.weighted-K11}
 \\
 \bigl|\E\bigl[
 W_\gamma K_\gamma[R_1^2,R_2]
 \bigr]\bigr|
 &\le CN^{11/6+C\epsilon},
 \label{eq.weighted-K21}
 \\
 \bigl|\E\bigl[
 W_\gamma K_\gamma[R_1^2,R_2^2]
 \bigr]\bigr|
 &\le CN^{5/2+C\epsilon}.
 \label{eq.weighted-K22}
\end{align}
\end{lemma}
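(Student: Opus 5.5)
The plan is to improve on the crude bound $|K_\gamma[A,B]|\le C\|A\|_{\HS}\|B\|_{\HS}$. This bound follows from Assumption~\ref{assu.1.1} and the definition of $K_{\gamma,t}$ as a difference quotient of $K^0_\gamma$ along $\mathcal B_s$, since $\frac{\dd}{\dd s}\mathcal B_s$ has bounded norm. Combined with \eqref{eq.HS-bounds}, it gives $N^{4/3}$, $N^{2}$ and $N^{8/3}$ for the three quantities. Each target in \eqref{eq.weighted-K11}--\eqref{eq.weighted-K22} is therefore smaller by exactly $N^{-1/6}$. The gain should come from two facts: the deterministic part of the resolvent is small in Hilbert--Schmidt norm, $\|\Pi\|_{\HS}\lesssim N^{1/2}$, and the random part averages out against $W_\gamma$ in expectation. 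I will treat all three estimates by one scheme, writing $R_a^{k}$ with $k\in\{1,2\}$. Note that $R_a^2$ is a contour derivative of $R_a$, so \eqref{eq.weighted-K21} and \eqref{eq.weighted-K22} should follow from \eqref{eq.weighted-K11}-type identities with one or two extra derivatives. Each extra derivative costs $\eta^{-1}\sim N^{2/3}$, which is exactly the ratio $N^{11/6}/N^{7/6}=N^{5/2}/N^{11/6}=N^{2/3}$. So I would first prove the $[R_1,R_2]$ case in a form robust enough to differentiate, or redo the argument with $R^2$ inserted.

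\textbf{Step 1: resolvent expansion of one argument.} Write $R_1=\Pi(z_1)+(R_1-\Pi(z_1))$. The contribution $K_\gamma[\Pi,R_2]$ is bounded crudely by $N^{1/2}\cdot N^{2/3+C\epsilon}=N^{7/6+C\epsilon}$, which is already acceptable. For the fluctuation, use \eqref{eq:population-stability} and the Sherman--Morrison formula to get
\[
R_1-\Pi=\Pi\sum_{\ell\neq\gamma}\Bigl(\frac{x_\ell x_\ell^TR_1^{(\ell)}}{1+x_\ell^TR_1^{(\ell)}x_\ell}-\frac{\Sigma R_1}{Nd_0}\Bigr).
\]
Here $R^{(\ell)}$ denotes the resolvent with column $\ell$ also removed. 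The $\ell$-th summand is then decomposed as follows:
\begin{itemize}
\item a part centered with respect to $x_\ell$, namely $\Pi(x_\ell x_\ell^T-\Sigma/N)R_1^{(\ell)}/d_0$;
\item corrections from $R_1$ versus $R_1^{(\ell)}$, which are of rank one;
\item corrections from $(1+x_\ell^TR^{(\ell)}x_\ell)^{-1}$ versus $d_0^{-1}$, controlled by Lemma~\ref{lem:denominator}.
\end{itemize}

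\textbf{Step 2: decoupling in $\ell$.} In $\E[W_\gamma K_\gamma[\Pi(x_\ell x_\ell^T-\Sigma/N)R_1^{(\ell)},R_2]]$, replace $W_\gamma$ by $W_\gamma^{(\ell)}$ and $R_2$ by $R_2^{(\ell)}$ plus its rank-one correction. The fully decoupled term then vanishes after taking $\E$ over $x_\ell$. The cost of the weight replacement is $N^{-1/3}$ by \eqref{eq.weight-assumptions}, and the rank-one corrections are handled with \eqref{eq:primitive}. What remains are second-order terms in $x_\ell$, which come from expanding $R_2-R_2^{(\ell)}$. Summed over $\ell$, these reproduce a term of the form $c(z_1,z_2)\,\E[W_\gamma K_\gamma[R_1,R_2]]$ with $c\approx N^{-1}\Tr(\Sigma\Pi\Sigma\Pi)/d_0^2$-type coefficients, together with errors. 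This yields a self-consistent equation
\[
(1-c(z_1,z_2))\,\E[W_\gamma K_\gamma[R_1,R_2]]=O(N^{7/6+C\epsilon})+\text{error}.
\]
The errors are estimated using \eqref{eq:schatten} with the inserted $\Sigma,\Pi$ factors, the crude bound on $K$, and H\"older's inequality.

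\textbf{Main obstacle.} I expect the main difficulty to be the stability of this equation at the edge. At a regular edge, $1-c$ is only of order $\sqrt{\kappa+\eta}\sim N^{-1/3+C\epsilon}$, so dividing by it loses up to $N^{1/3}$. To cope with this, I would first try to show that the error terms carry an extra factor of size $N^{-1/3}$ (coming from \eqref{eq:denominator-moments} and \eqref{eq:primitive}), so that after division the net gain is still $N^{-1/6}$. Failing that, I would iterate the expansion once more, expanding $R_2$ as well, to extract a second small factor before solving. In either case, bookkeeping the $N^{C\epsilon}$ losses is required to confirm that the final exponents $7/6$, $11/6$ and $5/2$ are reached.
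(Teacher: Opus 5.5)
Your overall architecture matches the paper's. Step~1 is, up to a missing minus sign ($R_1-\Pi=-\Pi(H_\gamma^{(\gamma)}R_1-d_0^{-1}\Sigma R_1)$), the identity $z_1DR_1=DH_\gamma^{(\gamma)}R_1-D$ with $D=-\Pi(z_1)$. The paper also uses the bound $|\E[W_\gamma K_\gamma[\Pi,R_2]]|\lesssim N^{7/6+C\epsilon}$, Sherman--Morrison in a column $\ell$, the replacements $W_\gamma\to W_\gamma^{(\ell)}$ and $(1+x_\ell^TR^{(\ell)}x_\ell)^{-1}\to d_0^{-1}$, cancellation of the first-order term under $\E_\ell$, and Cauchy's formula for \eqref{eq.weighted-K21}--\eqref{eq.weighted-K22}.

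\textbf{The gap.} You assert that the second-order terms from $R_2-R_2^{(\ell)}$ regenerate $c(z_1,z_2)\,\E[W_\gamma K_\gamma[R_1,R_2]]$ with $c=O(1)$. This is false, and it leads you to a stability problem that does not exist. Your first remedy would not rescue the argument even if the problem were real. With $1-c\asymp N^{-1/3}$, the term $K_\gamma[\Pi,R_2]$, which carries no extra factor $N^{-1/3}$, becomes $N^{3/2}$ after division. That is worse than the crude $N^{4/3}$. Your second remedy is not specific enough to evaluate. As written, the proposal does not prove \eqref{eq.weighted-K11}.

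\textbf{The missing observation.} Consider the second-order term (the paper's $V_\ell$):
\[
V_\ell=K_\gamma\bigl[\Pi x_\ell x_\ell^TR_1^{(\ell)},\,R_2^{(\ell)}x_\ell x_\ell^TR_2^{(\ell)}\bigr].
\]
All four factors of $x_\ell$ sit on indices of the tensor $K_\gamma$. So $\E_\ell$ never produces a scalar trace in front of $K_\gamma[R_1,R_2]$. Instead, with $A_\ell=R_1^{(\ell)}$ and $B_\ell=R_2^{(\ell)}$,
\[
\E_\ell V_\ell=N^{-2}\Bigl(K_\gamma[\Pi\Sigma A_\ell,B_\ell\Sigma B_\ell]+2K_\gamma[\Pi\Sigma B_\ell,A_\ell\Sigma B_\ell]+\Theta_\ell\Bigr),
\]
where $\Theta_\ell$ contracts $K_\gamma$ with the fourth-cumulant tensor of $\sqrt N x_\ell$.
\begin{itemize}
\item The first two terms are $O(N^{-2}\|A_\ell\|_{\HS}\|B_\ell\|_{\op}\|B_\ell\|_{\HS})=O(N^{C\epsilon})$.
\item For $\Theta_\ell$, diagonalize $A_\ell,B_\ell$ in a common orthonormal basis. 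Then use the map bound $\|\cK_\gamma(B)\|_{\HS}\le C\|B\|_{\HS}$ for both fully symmetric tensors, together with AM--GM. This gives $|\Theta_\ell|\le C(\Tr|A_\ell|^{3/2}+\Tr|B_\ell|^{3/2})^2\prec N^{2+C\epsilon}$.
\end{itemize}
The $\Theta_\ell$ estimate is where the $\Tr|R|^{3/2}$ bound in \eqref{eq:schatten} is essential. The pointwise bound $|V_\ell|\lesssim N^{1/2}$ would give $N^{3/2}$ after summing over $\ell$.

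Hence the second-order terms contribute only $O(N^{1+C\epsilon})$ in total, and the coefficient $c$ is simply absent. The equation closes through the exact identity $z_1D-d_0(z_1)^{-1}D\Sigma=I$, with no small denominator. The final $N^{7/6}$ comes from two sources: $K_\gamma[\Pi,R_2]$, and the denominator replacement in the $V_\ell$ terms, which costs $N\cdot N^{-1/3}\cdot N^{1/2}$.
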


\subsection{Proof of Lemma~\ref{telescoping error lemma}}
\label{sec.telescoping error lemma}
We now proceed with the proof of Lemma \ref{telescoping error lemma}. Recall \eqref{eq.six-term-comparison}, it suffices to estimate the six terms $\mathcal T_{k,\gamma}$ uniformly in $\gamma$. The decomposition obtained in Section~\ref{sec.comparison-term-analysis} gives
\[
 \mathcal T_{1,\gamma}=0,\qquad
 \mathcal T_{k,\gamma}=\mathcal T_{k,\gamma}^{(K)},
 \quad k=2,4,\qquad
 \mathcal T_{k,\gamma}
 =\mathcal T_{k,\gamma}^{(C)}
  +\mathcal T_{k,\gamma}^{(K)},
 \quad k=3,5,6.
\]
We then estimate $\mathcal T_j^{(C)}$ and $\mathcal T_j^{(K)}$ using Lemma~\ref{lem.centered-triple} and Lemma~\ref{lem.weighted-cumulant}, respectively. 

\begin{proof}[Proof of Lemma~\ref{telescoping error lemma}]
We first consider the terms involving $C_{\gamma,t_0}$.
By \eqref{eq.tauk defi2}, these terms are
\begin{equation*}
\begin{aligned}
\mathcal T_{3,\gamma}^{(C)}
&=\eta_0N^{-3}
\E\Biggl[
F'(\mu_\gamma)\sum_{\sigma=\pm1}
h_{2,\gamma,\sigma}
C_{\gamma,t_0}
(R_{\gamma,\sigma}^2,R_{\gamma,\sigma},R_{\gamma,\sigma})
\Biggr],
\\
\mathcal T_{5,\gamma}^{(C)}
&=\eta_0^2N^{-3}
\E\Biggl[
F''(\mu_\gamma)\sum_{\sigma,\rho=\pm1}
h_{0,\gamma,\sigma}h_{1,\gamma,\rho}
C_{\gamma,t_0}
(R_{\gamma,\sigma}^2,R_{\gamma,\rho}^2,R_{\gamma,\rho})
\Biggr],
\\
\mathcal T_{6,\gamma}^{(C)}
&=\frac{\eta_0^3}{6}N^{-3}
\E\Biggl[
F'''(\mu_\gamma)\sum_{\sigma,\rho,\nu=\pm1}
h_{0,\gamma,\sigma}h_{0,\gamma,\rho}h_{0,\gamma,\nu}
C_{\gamma,t_0}
(R_{\gamma,\sigma}^2,R_{\gamma,\rho}^2,R_{\gamma,\nu}^2)
\Biggr].
\end{aligned}
\end{equation*}
The bounds on Lemma~\ref{lem:res} and the Ward identity give,
for every fixed $p\ge1$,
\begin{equation*}\label{eq.resolvent-HS-moments}
\begin{aligned}
 \bigl\|\|R_{\gamma,\sigma}\|_{\mathrm{HS}}\bigr\|_{p}\le C_pN^{2/3+C\epsilon},
 \quad
 \bigl\|\|R_{\gamma,\sigma}^2\|_{\mathrm{HS}}\bigr\|_{p}\le C_pN^{4/3+C\epsilon}.
\end{aligned}
\end{equation*}
Together with the moment bounds
\[
 \|F^{(j)}(\mu_\gamma)\|_{p}
 +\|h_{k,\gamma,\sigma}\|_{p}
 \le C_{F,p}N^{C\epsilon},
 \qquad j=1,2,3,\quad k=0,1,2,
\]
Lemma~\ref{lem.centered-triple} and H\"older's inequality yield
\begin{align*}
 \bigl|\mathcal T_{3,\gamma}^{(C)}\bigr|
 &\le C_F\eta_0N^{-3}t_0
 N^{4/3+2/3+2/3+C\epsilon}
 \le C_Ft_0N^{-1+C\epsilon},
\\
 \bigl|\mathcal T_{5,\gamma}^{(C)}\bigr|
 &\le C_F\eta_0^2N^{-3}t_0
 N^{4/3+4/3+2/3+C\epsilon}
 \le C_Ft_0N^{-1+C\epsilon},
\\
 \bigl|\mathcal T_{6,\gamma}^{(C)}\bigr|
 &\le C_F\eta_0^3N^{-3}t_0
 N^{4/3+4/3+4/3+C\epsilon}
 \le C_Ft_0N^{-1+C\epsilon}.
\end{align*}
Since $t_0=N^{-1/3+\epsilon}$, we conclude that,
uniformly in $\gamma$,
\begin{equation}\label{eq.C-term-bound}
 \bigl|\mathcal T_{3,\gamma}^{(C)}\bigr|
 +\bigl|\mathcal T_{5,\gamma}^{(C)}\bigr|
 +\bigl|\mathcal T_{6,\gamma}^{(C)}\bigr|
 \le C_FN^{-4/3+C\epsilon}.
\end{equation}

We next consider the terms involving $K_\gamma$. By \eqref{eq.tauk defi2}, these terms are
\begin{equation*}\label{eq.K-terms}
\begin{aligned}
\mathcal T_{2,\gamma}^{(K)}
&=-t_0\eta_0N^{-2}
\E\Biggl[
F'(\mu_\gamma)\sum_{\sigma=\pm1}
h_{1,\gamma,\sigma}
K_\gamma[R_{\gamma,\sigma}^2,R_{\gamma,\sigma}]
\Biggr],
\\
\mathcal T_{3,\gamma}^{(K)}
&=t_0\eta_0N^{-3}
\E\Biggl[
F'(\mu_\gamma)\sum_{\sigma=\pm1}
h_{2,\gamma,\sigma}
(\Tr\Sigma R_{\gamma,\sigma}^2)
K_\gamma[R_{\gamma,\sigma},R_{\gamma,\sigma}]
\Biggr],
\\
\mathcal T_{4,\gamma}^{(K)}
&=-\frac{t_0\eta_0^2}{2}N^{-2}
\E\Biggl[
F''(\mu_\gamma)\sum_{\sigma,\rho=\pm1}
h_{0,\gamma,\sigma}h_{0,\gamma,\rho}
K_\gamma[R_{\gamma,\sigma}^2,R_{\gamma,\rho}^2]
\Biggr],
\\
\mathcal T_{5,\gamma}^{(K)}
&=t_0\eta_0^2N^{-3}
\E\Biggl[
F''(\mu_\gamma)\sum_{\sigma,\rho=\pm1}
h_{0,\gamma,\sigma}h_{1,\gamma,\rho}
\Bigl[
(\Tr\Sigma R_{\gamma,\sigma}^2)
K_\gamma[R_{\gamma,\rho}^2,R_{\gamma,\rho}]
\\
&\hspace{8em}
+(\Tr\Sigma R_{\gamma,\rho}^2)
K_\gamma[R_{\gamma,\sigma}^2,R_{\gamma,\rho}]
\Bigr]
\Biggr],
\\
\mathcal T_{6,\gamma}^{(K)}
&=\frac{t_0\eta_0^3}{6}N^{-3}
\E\Biggl[
F'''(\mu_\gamma)\sum_{\sigma,\rho,\nu=\pm1}
h_{0,\gamma,\sigma}h_{0,\gamma,\rho}h_{0,\gamma,\nu}
\Bigl[
(\Tr\Sigma R_{\gamma,\sigma}^2)
K_\gamma[R_{\gamma,\rho}^2,R_{\gamma,\nu}^2]
\\
&\hspace{8em}
+(\Tr\Sigma R_{\gamma,\rho}^2)
K_\gamma[R_{\gamma,\sigma}^2,R_{\gamma,\nu}^2]
+(\Tr\Sigma R_{\gamma,\nu}^2)
K_\gamma[R_{\gamma,\sigma}^2,R_{\gamma,\rho}^2]
\Bigr]
\Biggr].
\end{aligned}
\end{equation*}
Before applying Lemma~\ref{lem.weighted-cumulant}, we introduce the normalized traces
\begin{align*}
 \theta_{\gamma,\sigma}
 :=N^{-4/3}\Tr\Sigma R_{\gamma,\sigma}^2,
 \qquad
 \theta_{\gamma,\sigma}^{(\ell)}
 :=N^{-4/3}\Tr\Sigma\bigl(R_{\gamma,\sigma}^{(\ell)}\bigr)^2,
 \qquad \ell\ne\gamma,
\end{align*}
where
$R_{\gamma,\sigma}^{(\ell)}
:=\mathcal G_{1,\gamma}^{(\gamma,\ell)}(z_\sigma)$. So each of the term $\mathcal T_{j,\gamma}^{(K)}$ can be written in the form
\begin{align*}
 \text{deterministic coefficient}
 \times \E\bigl[W_\gamma K_\gamma[A,B]\bigr].
\end{align*}
The complete correspondence is listed below.
All signs $\rho,\sigma,\nu$ take values in $\{+1,-1\}$, and each row
is summed over its relevant signs.

\begin{center}
\renewcommand{\arraystretch}{1.6}
\resizebox{\textwidth}{!}{%
\begin{tabular}{c c c c}
\hline
Term & Deterministic coefficient
& Weight $W_\gamma$ & $K_\gamma[A,B]$ \\
\hline
$\mathcal T_{2,\gamma}^{(K)}$
& $-t_0\eta_0N^{-2}$
& $F'(\mu_\gamma)h_{1,\gamma,\sigma}$
& $K_\gamma[R_{\gamma,\sigma}^2,R_{\gamma,\sigma}]$
\\
$\mathcal T_{3,\gamma}^{(K)}$
& $t_0\eta_0N^{-5/3}$
& $F'(\mu_\gamma)h_{2,\gamma,\sigma}
   \theta_{\gamma,\sigma}$
& $K_\gamma[R_{\gamma,\sigma},R_{\gamma,\sigma}]$
\\
$\mathcal T_{4,\gamma}^{(K)}$
& $-\frac12t_0\eta_0^2N^{-2}$
& $F''(\mu_\gamma)
   h_{0,\gamma,\sigma}h_{0,\gamma,\rho}$
& $K_\gamma[R_{\gamma,\sigma}^2,R_{\gamma,\rho}^2]$
\\
$\mathcal T_{5,\gamma}^{(K)}$, first term
& $t_0\eta_0^2N^{-5/3}$
& $F''(\mu_\gamma)
   h_{0,\gamma,\sigma}h_{1,\gamma,\rho}
   \theta_{\gamma,\sigma}$
& $K_\gamma[R_{\gamma,\rho}^2,R_{\gamma,\rho}]$
\\
$\mathcal T_{5,\gamma}^{(K)}$, second term
& $t_0\eta_0^2N^{-5/3}$
& $F''(\mu_\gamma)
   h_{0,\gamma,\sigma}h_{1,\gamma,\rho}
   \theta_{\gamma,\rho}$
& $K_\gamma[R_{\gamma,\sigma}^2,R_{\gamma,\rho}]$
\\
$\mathcal T_{6,\gamma}^{(K)}$, first term
& $\frac16t_0\eta_0^3N^{-5/3}$
& $F'''(\mu_\gamma)
   h_{0,\gamma,\sigma}h_{0,\gamma,\rho}h_{0,\gamma,\nu}
   \theta_{\gamma,\sigma}$
& $K_\gamma[R_{\gamma,\rho}^2,R_{\gamma,\nu}^2]$
\\
$\mathcal T_{6,\gamma}^{(K)}$, second term
& $\frac16t_0\eta_0^3N^{-5/3}$
& $F'''(\mu_\gamma)
   h_{0,\gamma,\sigma}h_{0,\gamma,\rho}h_{0,\gamma,\nu}
   \theta_{\gamma,\rho}$
& $K_\gamma[R_{\gamma,\sigma}^2,R_{\gamma,\nu}^2]$
\\
$\mathcal T_{6,\gamma}^{(K)}$, third term
& $\frac16t_0\eta_0^3N^{-5/3}$
& $F'''(\mu_\gamma)
   h_{0,\gamma,\sigma}h_{0,\gamma,\rho}h_{0,\gamma,\nu}
   \theta_{\gamma,\nu}$
& $K_\gamma[R_{\gamma,\sigma}^2,R_{\gamma,\rho}^2]$
\\
\hline
\end{tabular}
}
\end{center}

For each weight $W_\gamma$ listed above, we also define $W_\gamma^{(\ell)}$ by replacing $\mu_\gamma$, $h_{k,\gamma,\sigma}$ and $\theta_{\gamma,\sigma}$ with their counterpart after deleting the additional column
$\ell\ne\gamma$.
In particular, $\mu_\gamma^{(\ell)}$ and
$h_{k,\gamma,\sigma}^{(\ell)}$ are obtained by replacing
$R_{\gamma,\sigma}$ with $R_{\gamma,\sigma}^{(\ell)}$
in their definitions, while keeping the deterministic
terms unchanged.
Thus $W_\gamma^{(\ell)}$ is measurable with respect
to the columns of $X_\gamma^{(\gamma,\ell)}$.

We first verify the assumptions of
Lemma~\ref{lem.weighted-cumulant}.
Bounds on Lemma~\ref{lem:res}, the Sherman Morrison formula
and the Ward identity give, for every fixed $p\ge1$,
\begin{align*}
 \|\mu_\gamma\|_p
 +\|\mu_\gamma^{(\ell)}\|_p
 &\le C_pN^{C\epsilon},
 &
 \|\mu_\gamma-\mu_\gamma^{(\ell)}\|_p
 &\le C_pN^{-1/3+C\epsilon},
 \\
 \|h_{k,\gamma,\sigma}\|_p
 +\|h_{k,\gamma,\sigma}^{(\ell)}\|_p
 &\le C_pN^{C\epsilon},
 &
 \|h_{k,\gamma,\sigma}
      -h_{k,\gamma,\sigma}^{(\ell)}\|_p
 &\le C_pN^{-2/3+C\epsilon},
 \\
 \|\theta_{\gamma,\sigma}\|_p
 +\|\theta_{\gamma,\sigma}^{(\ell)}\|_p
 &\le C_pN^{C\epsilon},
 &
 \|\theta_{\gamma,\sigma}
      -\theta_{\gamma,\sigma}^{(\ell)}\|_p
 &\le C_pN^{-1/3+C\epsilon},
\end{align*}
uniformly in their indices, where $k=0,1,2$.
Moreover, the regularity assumptions on $F$ and
the mean value theorem imply, for $j=1,2,3$,
\begin{align*}
 \|F^{(j)}(\mu_\gamma)\|_p
 +\|F^{(j)}(\mu_\gamma^{(\ell)})\|_p
 \le C_{F,p}N^{C\epsilon},
 \quad 
 \|F^{(j)}(\mu_\gamma)
      -F^{(j)}(\mu_\gamma^{(\ell)})\|_p
 \le C_{F,p}N^{-1/3+C\epsilon}.
\end{align*}
Hence all weights in the table satisfy the assumptions
of Lemma~\ref{lem.weighted-cumulant}. Applying Lemma~\ref{lem.weighted-cumulant} to each row of the table yields
\begin{align*}
 \bigl|\mathcal T_{2,\gamma}^{(K)}\bigr|
 &\le C_Ft_0\eta_0N^{-2}
 N^{11/6+C\epsilon}
 \le C_Ft_0N^{-5/6+C\epsilon},
 \\
 \bigl|\mathcal T_{3,\gamma}^{(K)}\bigr|
 &\le C_Ft_0\eta_0N^{-5/3}
 N^{7/6+C\epsilon}
 \le C_Ft_0N^{-7/6+C\epsilon},
 \\
 \bigl|\mathcal T_{4,\gamma}^{(K)}\bigr|
 &\le C_Ft_0\eta_0^2N^{-2}
 N^{5/2+C\epsilon}
 \le C_Ft_0N^{-5/6+C\epsilon},
 \\
 \bigl|\mathcal T_{5,\gamma}^{(K)}\bigr|
 &\le C_Ft_0\eta_0^2N^{-5/3}
 N^{11/6+C\epsilon}
 \le C_Ft_0N^{-7/6+C\epsilon},
 \\
 \bigl|\mathcal T_{6,\gamma}^{(K)}\bigr|
 &\le C_Ft_0\eta_0^3N^{-5/3}
 N^{5/2+C\epsilon}
 \le C_Ft_0N^{-7/6+C\epsilon},
\end{align*}
where we use $\eta_0=N^{-2/3-\epsilon}$.
Substituting $t_0=N^{-1/3+\epsilon}$ gives
\begin{equation*}\label{eq.K-term-bound}
\begin{aligned}
 \bigl|\mathcal T_{2,\gamma}^{(K)}\bigr|
 +\bigl|\mathcal T_{4,\gamma}^{(K)}\bigr|
 \le C_FN^{-7/6+C\epsilon},
 \quad
 \bigl|\mathcal T_{3,\gamma}^{(K)}\bigr|
 +\bigl|\mathcal T_{5,\gamma}^{(K)}\bigr|
 +\bigl|\mathcal T_{6,\gamma}^{(K)}\bigr|
 \le C_FN^{-3/2+C\epsilon}.
\end{aligned}
\end{equation*}
It follow that, uniformly in $\gamma$, we have
\begin{align}
 \sum_{k=2}^{6}
 \bigl|\mathcal T_{k,\gamma}^{(K)}\bigr|
 \le C_FN^{-7/6+C\epsilon},
\end{align}

Combining this estimate with the bounds for
$\mathcal T_{k,\gamma}^{(C)}$ and
$\mathcal T_{1,\gamma}=0$, we obtain from
\eqref{eq.six-term-comparison}
\begin{align*}
 &\left|
 \E F\bigl(\eta_0\Im\Tr\mathcal G_{2,\gamma}(z)\bigr)
 -
 \E F\bigl(\eta_0\Im\Tr\mathcal G_{2,\gamma-1}(z)\bigr)
 \right|
 \\
 &\qquad\le
 \sum_{k=2}^{6}\bigl|\mathcal T_{k,\gamma}^{(K)}\bigr|
 +\sum_{k\in\{3,5,6\}}
       \bigl|\mathcal T_{k,\gamma}^{(C)}\bigr|
 +CN^{-4/3+C\epsilon}
 \\
 &\qquad\le
 C_FN^{-7/6+C\epsilon}
 +C_FN^{-4/3+C\epsilon}
 \le C_FN^{-7/6+C\epsilon},
\end{align*}
uniformly in $\gamma$. This completes the proof of Lemma~\ref{telescoping error lemma}.
\end{proof}

\section{Proof of Lemma~\ref{lem.weighted-cumulant}}
\label{sec.weighted-cumulant}
\subsection{Properties of the fourth-cumulant quantity}
\label{sec.cumulant-properties}
Fixed the column $\gamma$. Recall that we write $K_{\gamma} =  K_{\gamma,t_0}$ and $(K_{\gamma})_{abcd}=(K_{\gamma,t_0})_{abcd}$, with
\begin{align*}
 K_{\gamma}[A,B]:=\sum_{a,b,c,d}(K_{\gamma})_{abcd}A_{ab}B_{cd},
 \quad
 (K_{\gamma})_{abcd}
 :=t_0^{-1}\left\{
 \kappa_4(q_\gamma)_{abcd}
 -\kappa_4(q_\gamma(t_0))_{abcd}\right\},
\end{align*}
where $q_\gamma(t)=(I_M-t\Sigma^{-1})^{1/2}q_\gamma
+\sqrt{t}\,g_\gamma$ and for $u \in \{q_\gamma,q_\gamma(t_0)\}$
\begin{align*}
 \kappa_4(u)_{abcd}
 =\E[u_au_bu_cu_d]
 -\Sigma_{ab}\Sigma_{cd}
 -\Sigma_{ac}\Sigma_{bd}
 -\Sigma_{ad}\Sigma_{bc}.
\end{align*}
The tensor $(K_{\gamma})_{abcd}$ is fully symmetric, meaning that it is invariant under any permutation of the indices $a,b,c,d$. We first establish some properties of $K_{\gamma}[A,B]$ that will be used in the proof of
Lemma~\ref{lem.weighted-cumulant}. We begin with a Hilbert--Schmidt bound of $K_{\gamma}[A,B]$ and then estimate the $U_\ell$ and $V_\ell$ arising from
the deletion of one additional column.
\begin{lemma}\label{lem.K-HS-bound}
For any complex symmetric matrices $A,B$, we have
\begin{equation}\label{eq.K-HS-bound}
 |K_\gamma[A,B]|\le C\|A\|_{\HS}\|B\|_{\HS},
\end{equation}
where constant $C \ge 0$ is independent of $N$ and $\gamma$.
\end{lemma}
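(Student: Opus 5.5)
The plan is to use the representation of $K_\gamma$ recorded in Section~\ref{sec.comparison-term-analysis},
\[
 K_\gamma[A,B]
 =\frac{K_\gamma^0[\mathcal B_0(A),\mathcal B_0(B)]
       -K_\gamma^0[\mathcal B_{t_0}(A),\mathcal B_{t_0}(B)]}{t_0},
 \qquad \mathcal B_s(A)=S_sAS_s,
\]
and to read it as a difference quotient of the smooth map $s\mapsto K_\gamma^0[\mathcal B_s(A),\mathcal B_s(B)]$. A crude bound would only give $C t_0^{-1}\|A\|_{\HS}\|B\|_{\HS}$, because of the factor $t_0^{-1}$. The division by $t_0$ therefore has to be absorbed by a derivative bound, exactly as in the proof of Lemma~\ref{lem.centered-triple}.

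\textbf{Bilinearity of $K_\gamma^0$.} First I check that $K_\gamma^0[\cdot,\cdot]$ is a bilinear form on complex symmetric matrices. Indeed
\[
 K_\gamma^0[A,B]=\E_\gamma[Q_A(q_\gamma)Q_B(q_\gamma)]-2\Tr(\Sigma A\Sigma B),
\]
and both terms are bilinear in $(A,B)$. It is bounded:
\[
 |K_\gamma^0[A,B]|\le \|Q_A\|_2\|Q_B\|_2+2\|\Sigma\|_\op^2\|A\|_{\HS}\|B\|_{\HS}\le C\|A\|_{\HS}\|B\|_{\HS},
\]
by Cauchy--Schwarz and \eqref{eq:quadratic-form-moments} with $p=2$. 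For complex symmetric $A$, I apply the real case to the real and imaginary parts.

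\textbf{Derivative bound.} Set $\phi(s):=K_\gamma^0[\mathcal B_s(A),\mathcal B_s(B)]$ for $s\in[0,t_*]$. Bilinearity gives
\[
 \phi'(s)=K_\gamma^0[\mathcal B_s'(A),\mathcal B_s(B)]+K_\gamma^0[\mathcal B_s(A),\mathcal B_s'(B)].
\]
The derivative exists since $S_s=(I_M-s\Sigma^{-1})^{1/2}$ is smooth in $s$ on $[0,t_*]$. This uses $t_*=\tau/2$ and $\Sigma\ge\tau I_M$, so that $I_M-s\Sigma^{-1}\ge \tfrac12 I_M$. The proof of Lemma~\ref{lem.centered-triple} already records
\[
 \|\mathcal B_s(A)\|_{\HS}+\|\mathcal B_s'(A)\|_{\HS}\le C\|A\|_{\HS},
\]
which follows from $\|S_s\|_\op,\|S_s'\|_\op\le C$. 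Combining this with the bound on $K_\gamma^0$ gives
\[
 \sup_{s\in[0,t_*]}|\phi'(s)|\le C\|A\|_{\HS}\|B\|_{\HS}.
\]

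\textbf{Conclusion.} By the mean value inequality,
\[
 |K_\gamma[A,B]|=t_0^{-1}|\phi(0)-\phi(t_0)|\le \sup_{s}|\phi'(s)|\le C\|A\|_{\HS}\|B\|_{\HS}.
\]
The constant depends only on $C_2$ and $\tau$, hence is uniform in $N$ and $\gamma$.

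The only step needing care is confirming that the representation of $K_\gamma$ via $\mathcal B_s$ matches the tensor definition. That identity is stated in the paper and follows from Lemma~\ref{lem.moment-identities}, so I expect no real obstacle.
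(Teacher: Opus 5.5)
Your proposal is correct and follows essentially the same argument as the paper: you bound $K_\gamma^0$ by Cauchy--Schwarz and \eqref{eq:quadratic-form-moments}, write $K_\gamma[A,B]$ as the difference quotient of $s\mapsto K_\gamma^0[S_sAS_s,S_sBS_s]$ over $[0,t_0]$, and absorb the factor $t_0^{-1}$ by a uniform derivative bound from $\|S_s\|_{\op}+\|S_s'\|_{\op}\le C$. The representation you flag as the only delicate point does hold, because the independent Gaussian part contributes no fourth cumulant, so $\kappa_4(q_\gamma(t))[A,B]=\kappa_4(q_\gamma)[S_tAS_t,S_tBS_t]=K_\gamma^0[S_tAS_t,S_tBS_t]$.
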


\begin{proof}
Set $S_s=(I_M-s\Sigma^{-1})^{1/2}$ and $\mathcal B_t(A)=S_tAS_t$. By \eqref{eq:quadratic-form-moments} and the boundedness of $\Sigma$,
\[
 |K_\gamma^0[A,B]|
 =|\E_\gamma[Q_A(q_\gamma)Q_B(q_\gamma)]-2\Tr(\Sigma A\Sigma B)|
 \le C\|A\|_{\HS}\|B\|_{\HS}.
\]
uniformly in $N$ and $\gamma$. Note that
\[
 K_\gamma[A,B]
 =K_{\gamma,t_0}[A,B]
 =\frac1{t_0}\left(
 K_\gamma^0[S_0AS_0,S_0BS_0]
 -K_\gamma^0[S_{t_0}AS_{t_0},S_{t_0}BS_{t_0}]
 \right).
\]
To estimate this difference, we define
\[
 F(s):=K_\gamma^0[S_sAS_s,S_sBS_s],
 \qquad 0\le s\le t_0.
\]
Since $t_0\le\tau/2$, the lower and upper bounds on $\Sigma$ imply
\[
 S_s'=-\frac12\Sigma^{-1}S_s^{-1},
 \qquad
 \|S_s\|_{\op}+\|S_s'\|_{\op}\le C,
 \qquad
 0\le s\le t_0.
\]
Then differentiating $F(s)$ gives,
\begin{align*}
F'(s)=K_\gamma^0[S_s'AS_s+S_sAS_s',S_sBS_s]+K_\gamma^0[S_sAS_s,S_s'BS_s+S_sBS_s'].
\end{align*}
Using the bound on $K_\gamma^0$ together with
\begin{align*}
 \|S_sAS_s\|_{\HS}
 &\le \|S_s\|_{\op}^2\|A\|_{\HS}
 \le C\|A\|_{\HS},\\
 \|S_s'AS_s+S_sAS_s'\|_{\HS}
 &\le 2\|S_s'\|_{\op}\|S_s\|_{\op}\|A\|_{\HS}
 \le C\|A\|_{\HS},
\end{align*}
and the analogous estimates for $B$, we obtain
\[
 \sup_{0\le s\le t_0}|F'(s)|
 \le C\|A\|_{\HS}\|B\|_{\HS}.
\]
Consequently,
\[
 |K_\gamma[A,B]|
 =\left|-\frac1{t_0}\int_0^{t_0}F'(s)\,\mathrm ds\right|
 \le C\|A\|_{\HS}\|B\|_{\HS},
\]
which proves the claim.
\end{proof}

It is also useful to introduce the linear map
\[
 (\cK_\gamma(B))_{ab}:=\sum_{c,d}(K_\gamma)_{abcd}B_{cd}.
\]
The matrix $\cK_\gamma(B)$ is symmetric and satisfies, uniformly in $\gamma$,
\begin{equation}\label{eq.K-map-bound}
 \|\cK_\gamma(B)\|_{\HS}\le C\|B\|_{\HS}.
\end{equation}
Indeed, we have
\begin{align*}
 \|\mathcal K_\gamma(B)\|_{\HS}^2
 &=\sum_{a,b}
 \overline{(\mathcal K_\gamma(B))_{ab}}(\mathcal K_\gamma(B))_{ab}
 =\sum_{a,b,c,d}(K_\gamma)_{abcd}
   \overline{(\mathcal K_\gamma(B))_{ab}}B_{cd}\\
 &=K_\gamma[\overline{\mathcal K_\gamma(B)},B]
 \le C\|\mathcal K_\gamma(B)\|_{\HS}\|B\|_{\HS}.
\end{align*}

We next introduce two quantities that arise from the resolvent
expansions obtained by removing an additional column in the proof of
Lemma~\ref{lem.weighted-cumulant}. Set
\begin{equation}\label{eq.population-stability-matrix}
 \begin{gathered}
 \Pi(z):=-\frac1z(I_M+m_2(z)\Sigma)^{-1},\qquad
 d_0(z):=1+\frac1N\Tr(\Sigma\Pi(z))=-\frac1{zm_2(z)},\\
 D(z):=\left(zI_M-d_0(z)^{-1}\Sigma\right)^{-1}
       =\frac1z(I_M+m_2(z)\Sigma)^{-1}=-\Pi(z).
 \end{gathered}
\end{equation}

The Assumption~\ref{assu.population} ensures that the right edge $\lambda_+$ is regular. This yields, uniformly in \eqref{eq:domain1},
\begin{equation}\label{eq.population-stability-bound}
 |d_0(z)|+|d_0(z)^{-1}|+\|D(z)\|_{\op}\le C.
\end{equation}
In particular, $D(z)$ is a deterministic symmetric matrix commuting
with $\Sigma$.

Fix $\gamma$ and delete one additional column $\ell\ne\gamma$. We write
\[
 A_\ell:=\mathcal G_{1,\gamma}^{(\gamma,\ell)}(z_1),\qquad
 B_\ell:=\mathcal G_{1,\gamma}^{(\gamma,\ell)}(z_2),\qquad
 D:=D(z_1),
\]
and let $v_\ell$ denote column $\ell$ of $X_\gamma$(so $v_\ell=\tilde x_\ell$ or $v_\ell=x_\ell$ ). Thus
$\E_\ell v_\ell v_\ell^T=N^{-1}\Sigma$. Define
\[
 U_\ell:=K_\gamma[Dv_\ell v_\ell^TA_\ell,B_\ell],\qquad
 V_\ell:=K_\gamma[Dv_\ell v_\ell^TA_\ell,
                   B_\ell v_\ell v_\ell^TB_\ell].
\]
Recall that $\E_\ell$ denotes expectation with respect to $(x_\ell,x_\ell^w)$. In particular, $A_\ell$ and $B_\ell$ are fixed under
$\E_\ell$ and independent of $v_\ell$. We then establish the following estimates for $U_\ell$ and $V_\ell$ .

\begin{lemma}\label{lem.internal-column}
Under the preceding assumptions, for every fixed $p\ge1$, we have
\[
 \E_\ell U_\ell=N^{-1}K_\gamma[D\Sigma A_\ell,B_\ell],
\]
and
\[
 \|U_\ell\|_p\le C_pN^{1/3+C\epsilon},\qquad
 \|V_\ell\|_p\le C_pN^{1/2+C\epsilon},\qquad
 \|\E_\ell V_\ell\|_p\le C_pN^{C\epsilon},
\]
uniformly in $\gamma$, $\ell\ne\gamma$, and the admissible spectral
parameters.
\end{lemma}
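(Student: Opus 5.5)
The proof rests on two facts: $K_\gamma[\cdot,\cdot]$ is bilinear, and $v_\ell$ is independent of $(A_\ell,B_\ell,K_\gamma)$. We use the linear map $\cK_\gamma$ to rewrite $U_\ell$ and $V_\ell$ as quadratic forms in $v_\ell$. By full symmetry of the tensor and symmetry of $B_\ell$,
\[
U_\ell=\sum_{a,b}(Dv_\ell v_\ell^TA_\ell)_{ab}(\cK_\gamma(B_\ell))_{ab}
=v_\ell^T A_\ell\,\cK_\gamma(B_\ell)^T D\,v_\ell ,
\]
and, writing $M_\ell:=A_\ell\cK_\gamma(B_\ell)D$ up to symmetrization, $U_\ell=\langle v_\ell,M_\ell v_\ell\rangle$. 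Taking $\E_\ell$ and using $\E_\ell v_\ell v_\ell^T=N^{-1}\Sigma$ gives $\E_\ell U_\ell=N^{-1}\Tr(\Sigma M_\ell)=N^{-1}K_\gamma[D\Sigma A_\ell,B_\ell]$. This is just bilinearity applied to $\E_\ell[Dv_\ell v_\ell^TA_\ell]=N^{-1}D\Sigma A_\ell$, which proves the identity.

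For the $p$-th moment of $U_\ell$, we split $U_\ell=\E_\ell U_\ell+N^{-1}Q_{M_\ell}(\sqrt N v_\ell)$. Here Assumption~\ref{assu.1.1} applies, and Lemma~\ref{lem:interpolation-stability} when $v_\ell=\tilde x_\ell$, after symmetrizing $M_\ell$. For the fluctuation, \eqref{eq.K-map-bound} and $\|D\|_\op\le C$ give $\|M_\ell\|_\HS\le\|A_\ell\|_\op\|\cK_\gamma(B_\ell)\|_\HS\|D\|_\op\le \|A_\ell\|_\op\, C\|B_\ell\|_\HS$. The operator norm $\|A_\ell\|_\op\le\eta^{-1}\le N^{2/3+\epsilon}$ is too crude here. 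Instead we bound $\|M_\ell\|_\HS\le \|A_\ell\|_\HS\|\cK_\gamma(B_\ell)\|_\op C$ and use $\|\cK_\gamma(B_\ell)\|_\op\le\|\cK_\gamma(B_\ell)\|_\HS$. By \eqref{eq.HS-bounds}, both routes give $N^{4/3+C\epsilon}$, so the fluctuation term is $N^{-1}\cdot N^{4/3}=N^{1/3+C\epsilon}$ in $L^p$. The mean term is bounded by Lemma~\ref{lem.K-HS-bound} as $N^{-1}\|D\Sigma A_\ell\|_\HS\|B_\ell\|_\HS\prec N^{1/3+C\epsilon}$. Moments follow because the HS bounds hold in every $L^p$ (deterministic bound $\eta^{-1}$ plus overwhelming probability). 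This gives $\|U_\ell\|_p\le C_pN^{1/3+C\epsilon}$.

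For $V_\ell$ we write $V_\ell=\sum (K_\gamma)_{abcd}(Dv)_a(A_\ell v)_b(B_\ell v)_c(B_\ell v)_d$, with $v=v_\ell$. This is a symmetric 4-linear form evaluated at the vectors $Dv$, $A_\ell v$, $B_\ell v$, $B_\ell v$. Taking $A=(Dv)(A_\ell v)^T$ and $B=(B_\ell v)(B_\ell v)^T$, which are rank one (symmetrize), Lemma~\ref{lem.K-HS-bound} gives
\[
|V_\ell|\le C|Dv||A_\ell v||B_\ell v|^2 .
\]
By \eqref{eq:primitive}, $\|Rx\|_p\le C_pN^{1/6+C\epsilon}$, so H\"older yields $\|V_\ell\|_p\le C_pN^{1/2+C\epsilon}$.

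The main obstacle is $\E_\ell V_\ell$, which must be shown to be $O(N^{C\epsilon})$, a gain of $N^{1/2}$. We expand $\E_\ell[v_av_bv_cv_d]=N^{-2}\bigl(\Sigma_{ab}\Sigma_{cd}+\Sigma_{ac}\Sigma_{bd}+\Sigma_{ad}\Sigma_{bc}+\kappa_4(q_\ell)_{abcd}\bigr)$, where for $v_\ell=\tilde x_\ell$ we use $\kappa_4(q_\ell(t_0))$. The pairing terms give $N^{-2}K_\gamma[D\Sigma A_\ell,B_\ell\Sigma B_\ell]$ plus two terms of the form $N^{-2}K_\gamma[D\Sigma B_\ell, A_\ell\Sigma B_\ell]$. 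By Lemma~\ref{lem.K-HS-bound} and the Schatten bounds \eqref{eq:schatten} (which allow inserted bounded matrices), these are at most $N^{-2}\|A_\ell\|_\HS\|B_\ell\Sigma B_\ell\|_\HS\prec N^{-2}N^{2/3}N^{4/3}=N^{C\epsilon}$, and likewise $N^{-2}\|B_\ell\|_\HS\|A_\ell\Sigma B_\ell\|_\HS\le N^{-2}N^{2/3}\|A_\ell\|_{\op}\|B_\ell\|_\HS$. The last bound is too big, so here we would use $\|A_\ell\Sigma B_\ell\|_\HS\le \|A_\ell\|_{S^4}\|B_\ell\|_{S^4}\prec N^{4/3+C\epsilon}$ (H\"older for Schatten norms with \eqref{eq:schatten}, $s=4$), which again gives $N^{C\epsilon}$. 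The cumulant term is the delicate one. We view it as the contraction $N^{-2}\sum K_{abcd}\kappa_4(q_\ell)_{a'b'c'd'}D_{aa'}(A_\ell)_{bb'}(B_\ell)_{cc'}(B_\ell)_{dd'}$. We plan to bound it by expressing $\kappa_4(q_\ell)$ through $K^0_\ell$ and applying the HS bound of Assumption~\ref{assu.1.1} iteratively, once in each pair of slots. Concretely, it equals $N^{-2}\E_\ell'\bigl[Q\bigr]$ with $q_\ell^T(\cdot)q_\ell$ structure: $N^{-2}\bigl(\E[(q^TA_\ell\mathcal K^\sharp B_\ell q)\dots]\bigr)$ minus the Gaussian part. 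Cauchy--Schwarz then reduces it to $N^{-2}\|\cdot\|_\HS$ norms of products $A_\ell X B_\ell$, which are bounded as above by $N^{4/3}\cdot N^{2/3}$, giving $N^{C\epsilon}$. Combining all of this gives $\|\E_\ell V_\ell\|_p\le C_pN^{C\epsilon}$.
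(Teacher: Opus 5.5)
\textbf{Assessment.} Your computation of $\E_\ell U_\ell$, your bounds on $\|U_\ell\|_p$ and $\|V_\ell\|_p$, and your treatment of the three Gaussian pairings in $\E_\ell V_\ell$ are correct. They are essentially the paper's argument. One small slip: $\|A_\ell\|_{\op}\le|\eta|^{-1}\le N^{2/3+\epsilon}$ is not too crude. The bound $N^{-2}\|A_\ell\|_{\op}\|B_\ell\|_{\HS}^2\prec N^{C\epsilon}$ is exactly what the paper uses, although your Schatten-$4$ alternative also works.

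\textbf{The gap.} The problem is the cumulant term
$\Theta_{\gamma,\ell}(D)=\sum K_{abcd}D_{ae}C^{(\ell,\gamma)}_{efgh}(A_\ell)_{fb}(B_\ell)_{cg}(B_\ell)_{hd}$. This is the only place where the $N^{1/2}$ gain over $\|V_\ell\|_p$ must be earned, and here you give a plan rather than an argument.
\begin{itemize}
\item The concentration bound cannot be applied to a form $q_\ell^T(\cdot)q_\ell$ whose middle matrix itself depends on $q_\ell$. That is unavoidable here, since $\Theta$ is quartic in the $q_\ell$-legs.
\item What remains usable is that $K_\gamma$ and $C^{(\ell,\gamma)}$ are bounded as Hilbert--Schmidt-to-Hilbert--Schmidt maps. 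The natural Hilbert--Schmidt implementations of that information lose a power of $N$.
\item Writing $\Theta=\Tr\bigl(\mathbf K^T(D\otimes A_\ell)\mathbf C(B_\ell\otimes B_\ell)^T\bigr)$ on $\R^{M^2}$ and using Schatten H\"older gives at best $N^{7/3}$. The $D$-leg costs $\|D\|_{S_p}\asymp N^{1/p}$.
\item A Cauchy--Schwarz split that puts two resolvent legs with $K_\gamma$ and one with $C^{(\ell,\gamma)}$ gives $N^{5/2}$.
\end{itemize}
Either route yields only $\|\E_\ell V_\ell\|_p\lesssim N^{1/3}$ or worse. That is too weak for the self-consistent equation, where $\sum_{\ell}\E[W_\gamma^{(\ell)}V_\ell]$ must be $O(N^{7/6+C\epsilon})$. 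So your claimed reduction to ``$\|A_\ell XB_\ell\|_{\HS}\cdot N^{2/3}$'' is unsubstantiated.

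\textbf{The missing idea.} $A_\ell$ and $B_\ell$ are resolvents of the same real symmetric matrix, so they share a real orthogonal eigenbasis $O$.
\begin{itemize}
\item Rotating both tensors into this basis gives $\Theta_{\gamma,\ell}(D)=\sum_{h,j,k}a_hb_jb_k\,(k^{(\gamma)}_{hjk})^TO^TDO\,c^{(\ell,\gamma)}_{hjk}$. Now $D$ is the only non-diagonal leg, and it costs only $\|D\|_{\op}$.
\item The map bound \eqref{eq.K-map-bound}, applied to $(o_ho_j^T+o_jo_h^T)/2$, gives the slice estimate $\sum_{s,k}(\widehat K^{(\gamma)}_{shjk})^2\le C$ for each fixed $(h,j)$. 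The same holds for $\widehat C^{(\ell,\gamma)}$.
\item AM--GM in the form $|a_hb_jb_k|\le\frac13\bigl(|a_hb_j|^{3/2}+|a_hb_k|^{3/2}+|b_jb_k|^{3/2}\bigr)$ lets you sum out the third index for free. This yields $|\Theta_{\gamma,\ell}(D)|\lesssim\Tr|A_\ell|^{3/2}\,\Tr|B_\ell|^{3/2}+(\Tr|B_\ell|^{3/2})^2\prec N^{2+C\epsilon}$.
\end{itemize}
The exponent $3/2$ is forced. It is the only split of the cubic weight for which both factors stay at the borderline scale $\Tr|R|^{3/2}\prec N^{1+C\epsilon}$ of \eqref{eq:schatten}. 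With Schatten-$2$ quantities you lose a positive power of $N$.
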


\begin{proof}
We first consider $U_\ell$. Set $M_\ell:=\cK_\gamma(B_\ell)$. By definition, we have
\begin{align*}
 K_\gamma[Dv_\ell v_\ell^TA_\ell,B_\ell]
 &=\sum_{a,b,e,f}
 D_{ae}(v_\ell)_e(v_\ell)_f(A_\ell)_{fb}
 \left(\sum_{c,d}(K_\gamma)_{abcd}(B_\ell)_{cd}\right)\\
 &=\sum_{a,b,e,f}
 D_{ae}(v_\ell)_e(v_\ell)_f(A_\ell)_{fb}(M_\ell)_{ab}\\
 &=\sum_{e,f}(v_\ell)_e(DM_\ell A_\ell)_{ef}(v_\ell)_f
 =v_\ell^TDM_\ell A_\ell v_\ell.
\end{align*}
Here we used the symmetry of $D$ and $A_\ell$.
Using the symmetry of $M_\ell$ instead gives
\[
 U_\ell
 =v_\ell^TA_\ell M_\ell Dv_\ell
 =v_\ell^T
 \frac{DM_\ell A_\ell+A_\ell M_\ell D}{2}
 v_\ell.
\]
In particular, since $A_\ell$, $M_\ell$, and $D$ are fixed
under $\E_\ell$, and
$\E_\ell[(v_\ell)_e(v_\ell)_f]=N^{-1}\Sigma_{ef}$,
\begin{align*}
 \E_\ell U_\ell
 =\sum_{e,f}(A_\ell M_\ell D)_{ef}
   \E_\ell[(v_\ell)_e(v_\ell)_f]
 =N^{-1}\Tr(A_\ell M_\ell D\Sigma)
 =N^{-1}K_\gamma[D\Sigma A_\ell,B_\ell].
\end{align*}
Moreover, \eqref{eq.K-HS-bound},
\eqref{eq.population-stability-bound}, and Lemma~\ref{lem:res} give
\begin{equation}\label{eq.EiUiestimate1}
 \|\E_\ell U_\ell\|_p
 \le CN^{-1}
   \bigl\|\|A_\ell\|_{\HS}\|B_\ell\|_{\HS}\bigr\|_p
 \le C_pN^{1/3+C\epsilon}.
\end{equation}
Applying \eqref{eq:quadratic-form-moments} on Assumption~\ref{assu.1.1}, followed by \eqref{eq.K-map-bound}, we obtain
\begin{equation}\label{eq.EiUiestimate2}
\begin{aligned}
 \|U_\ell-\E_\ell U_\ell\|_p
 &\le C_pN^{-1}
 \left\|
 \left\|
 \frac{A_\ell M_\ell D+DM_\ell A_\ell}{2}
 \right\|_{\HS}
 \right\|_p\\
 &\le C_pN^{-1}
 \bigl\|\|A_\ell\|_{\op}\|M_\ell\|_{\HS}\bigr\|_p\\
 &\le C_pN^{-1}
 \bigl\|\|A_\ell\|_{\op}\|B_\ell\|_{\HS}\bigr\|_p
 \le C_pN^{1/3+C\epsilon}.
\end{aligned}
\end{equation}
Combining \eqref{eq.EiUiestimate1} and \eqref{eq.EiUiestimate2}, we finally get
\[
 \|U_\ell\|_p
 \le\|U_\ell-\E_\ell U_\ell\|_p
    +\|\E_\ell U_\ell\|_p
 \le C_pN^{1/3+C\epsilon}.
\]

We next consider $V_\ell$. Similarly,
\[
 |V_\ell|
 \le C\|Dv_\ell v_\ell^TA_\ell\|_{\HS}
          \|B_\ell v_\ell v_\ell^TB_\ell\|_{\HS}
 \le C|v_\ell|\,|A_\ell v_\ell|\,|B_\ell v_\ell|^2,
\]
so H\"older's inequality and Lemma~\ref{lem:res} yield
$\|V_\ell\|_p\le C_pN^{1/2+C\epsilon}$.

It remains to estimate $\E_\ell V_\ell$. Denote the fourth cumulant
tensor of $\sqrt N\,v_\ell$ by $C^{(\ell,\gamma)}$. More explicitly,
\[
 C^{(\ell,\gamma)}=
 \begin{cases}
  \kappa_4(q_\ell(t_0)),&\ell<\gamma,\\
  \kappa_4(q_\ell),&\ell>\gamma.
 \end{cases}
\]
By definition, we have
\[
 N^2\E_\ell[(v_\ell)_e(v_\ell)_f(v_\ell)_g(v_\ell)_h]
 =\Sigma_{ef}\Sigma_{gh}+\Sigma_{eg}\Sigma_{fh}
  +\Sigma_{eh}\Sigma_{fg}+C^{(\ell,\gamma)}_{efgh}.
\]
Contracting the three covariance pairings and using the full symmetry of $K_\gamma$, we obtain
\begin{equation}\label{eq.EkVkall}
 \E_\ell V_\ell=N^{-2}\Bigl(
 K_\gamma[D\Sigma A_\ell,B_\ell\Sigma B_\ell]
 +2K_\gamma[D\Sigma B_\ell,A_\ell\Sigma B_\ell]
 +\Theta_{\gamma,\ell}(D)\Bigr),
\end{equation}
where
\[
 \Theta_{\gamma,\ell}(D):=
 \sum_{a,b,c,d,e,f,g,h}
 (K_\gamma)_{abcd}D_{ae}C^{(\ell,\gamma)}_{efgh}
 (A_\ell)_{fb}(B_\ell)_{cg}(B_\ell)_{hd}.
\]
To estimate $\Theta_{\gamma,\ell}(D)$, observe that $A_\ell,B_\ell$ are resolvents of the same real
symmetric matrix. Therefore, there is a real orthogonal matrix $O$ such that
\[
 A_\ell=O\diag(a_1,\ldots,a_M)O^T,\qquad
 B_\ell=O\diag(b_1,\ldots,b_M)O^T.
\]
where $a_j$ and $b_j$ are the eigenvalues of $A_\ell$ and $B_\ell$, respectively. Define the tensors $\widehat K^{(\gamma)}$ and
$\widehat C^{(\ell,\gamma)}$ in the orthonormal basis given by $O$:
\begin{align*}
 \widehat K^{(\gamma)}_{shjk}
 :=\sum_{a,b,c,d}
 (K_\gamma)_{abcd}O_{as}O_{bh}O_{cj}O_{dk},
 \qquad
 \widehat C^{(\ell,\gamma)}_{shjk}
 :=\sum_{a,b,c,d}
 C^{(\ell,\gamma)}_{abcd}O_{as}O_{bh}O_{cj}O_{dk}.
\end{align*}
And for fixed $h,j,k$, set
\[
 k^{(\gamma)}_{hjk}
 :=(\widehat K^{(\gamma)}_{shjk})_{s=1}^M,
 \qquad
 c^{(\ell,\gamma)}_{hjk}
 :=(\widehat C^{(\ell,\gamma)}_{shjk})_{s=1}^M.
\]
Then
\[
 \Theta_{\gamma,\ell}(D)
 =\sum_{h,j,k}a_hb_jb_k\,
 \bigl(k^{(\gamma)}_{hjk}\bigr)^T
 (O^TDO)c^{(\ell,\gamma)}_{hjk}.
\]
It follows that
\begin{equation}\label{eq.population-mixed-cumulant-bound}
 |\Theta_{\gamma,\ell}(D)|
 \le\frac{\|D\|_{\op}}2
 \sum_{s,h,j,k}|a_hb_jb_k|
 \left((\widehat K^{(\gamma)}_{shjk})^2
       +(\widehat C^{(\ell,\gamma)}_{shjk})^2\right).
\end{equation}
Both tensors on the right side are fully symmetric and separately satisfy the map bound
\eqref{eq.K-map-bound}, with constants uniform in $\gamma$ and $\ell$. Indeed, let $o_h$ denote the $h$th column of $O$. Using the full symmetry of $\widehat K^{(\gamma)}$
and $(o_h)_c=O_{ch}$, we obtain
\begin{align*}
 \widehat K^{(\gamma)}_{shjk}
 &=\frac12\left(
 \widehat K^{(\gamma)}_{skhj}
 +\widehat K^{(\gamma)}_{skjh}
 \right)\\
 &=\frac12\sum_{a,b,c,d}
 O_{as}(K_\gamma)_{abcd}
 \bigl(O_{ch}O_{dj}+O_{cj}O_{dh}\bigr)O_{bk}\\
 &=\left[
 O^T\mathcal K_\gamma\left(
 \frac{o_ho_j^T+o_jo_h^T}{2}
 \right)O
 \right]_{sk}.
\end{align*}
By \eqref{eq.K-map-bound}, we get
\[
 \sum_{s,k}(\widehat K^{(\gamma)}_{shjk})^2
 =\left\|
 O^T\cK_\gamma\left(
 \frac{o_ho_j^T+o_jo_h^T}{2}
 \right)O
 \right\|_{\HS}^2
 \le C\left\|\frac{o_ho_j^T+o_jo_h^T}{2}\right\|_{\HS}^2
 =C\frac{1+\delta_{hj}}2\le C,
\]
and the same inequality holds for $\widehat C^{(\ell,\gamma)}$.
Together with
\[
 |a_hb_jc_k|\le\frac13\left(
 |a_hb_j|^{3/2}+|a_hc_k|^{3/2}+|b_jc_k|^{3/2}\right),
\]
we conclude from \eqref{eq.population-mixed-cumulant-bound} that
\begin{align*}
 |\Theta_{\gamma,\ell}(D)|
 &\le C\sum_{s,h,j,k}|a_hb_jb_k|
 \left(
 (\widehat K^{(\gamma)}_{shjk})^2
 +(\widehat C^{(\ell,\gamma)}_{shjk})^2
 \right)\\
 &\le C\Biggl[
 \sum_{h,j}|a_hb_j|^{3/2}
 \sum_{s,k}\left(
 (\widehat K^{(\gamma)}_{shjk})^2
 +(\widehat C^{(\ell,\gamma)}_{shjk})^2
 \right)+
 \sum_{h,k}|a_hb_k|^{3/2}
 \sum_{s,j}\left(
 (\widehat K^{(\gamma)}_{shjk})^2
 +(\widehat C^{(\ell,\gamma)}_{shjk})^2
 \right)\\
 &\qquad+
 \sum_{j,k}|b_jb_k|^{3/2}
 \sum_{s,h}\left(
 (\widehat K^{(\gamma)}_{shjk})^2
 +(\widehat C^{(\ell,\gamma)}_{shjk})^2
 \right)
 \Biggr]\\
 &\le C\Biggl[
 \sum_{h,j}|a_h|^{3/2}|b_j|^{3/2}
 +\sum_{h,k}|a_h|^{3/2}|b_k|^{3/2}
 +\sum_{j,k}|b_j|^{3/2}|b_k|^{3/2}
 \Biggr]\\
 &=C\Biggl[
 2\left(\sum_h|a_h|^{3/2}\right)
  \left(\sum_j|b_j|^{3/2}\right)
 +\left(\sum_j|b_j|^{3/2}\right)^2
 \Biggr]\\
 &=C\left[
 2\Tr|A_\ell|^{3/2}\Tr|B_\ell|^{3/2}
 +\bigl(\Tr|B_\ell|^{3/2}\bigr)^2
 \right].
\end{align*}
The bounds on Lemma~\ref{lem:res} give
\[
 \bigl\|\Tr|A_\ell|^{3/2}\bigr\|_p
 +\bigl\|\Tr|B_\ell|^{3/2}\bigr\|_p
 \le C_pN^{1+C\epsilon},
\]
which imply $\|\Theta_{\gamma\ell}\|_p\le C_pN^{2+C\epsilon}$. Moreover, we have
\[
 \begin{aligned}
 |K_\gamma[D\Sigma A_\ell,B_\ell\Sigma B_\ell]|
 &\le C\|A_\ell\|_{\HS}\|B_\ell\|_{\op}\|B_\ell\|_{\HS},\\
 |K_\gamma[D\Sigma B_\ell,A_\ell\Sigma B_\ell]|
 &\le C\|A_\ell\|_{\op}\|B_\ell\|_{\HS}^2.
 \end{aligned}
\]
Their $L^p$ norms are at most $C_pN^{2+C\epsilon}$.
Substituting these estimates into \eqref{eq.EkVkall} proves
$\|\E_\ell V_\ell\|_p\le C_pN^{C\epsilon}$.
\end{proof}

\subsection{Proof of Lemma~\ref{lem.weighted-cumulant}}
\label{sec.weighted-cumulant-proof}
We now prove Lemma~\ref{lem.weighted-cumulant}
using Lemma~\ref{lem.internal-column}. The basic scale in Lemma~\ref{lem:res} gives
\[
\left|\mathbb E\bigl[W_\gamma K[R_1,R_2]\bigr]\right|
\le C N^{4/3+C\varepsilon}.
\]
This bound is insufficient for the Green function comparison argument. To obtain the required gain, we derive a self-consistent equation after deleting one additional column.
This yields a gain of $N^{1/6}$ over the basic scale bound. 

We only prove \eqref{eq.weighted-K11} here and the remaining estimates \eqref{eq.weighted-K21} and \eqref{eq.weighted-K22} follow from Cauchy's integral formula; The details are given in
Appendix \ref{app:cauchy-weighted-K}.

\begin{proof}[Proof of Lemma~\ref{lem.weighted-cumulant}]
Fix $\gamma$ and write $D=D(z_1)$. Multiplying
$z_1R_1=H_\gamma^{(\gamma)}R_1-I_M$ by $D$ gives
\begin{equation}\label{eq.K-resolvent-equation}
 z_1\E[W_\gamma K_\gamma[DR_1,R_2]]
 =\E[W_\gamma K_\gamma[DH_\gamma^{(\gamma)}R_1,R_2]]
  -\E[W_\gamma K_\gamma[D,R_2]].
\end{equation}
Since $\|D\|_{\HS}\le C\sqrt M$, we have
\[
 |\E[W_\gamma K_\gamma[D,R_2]]|
 \le C\sqrt M\,\|W_\gamma\|_2
                  \bigl\|\|R_2\|_{\HS}\bigr\|_2
 \le CN^{7/6+C\epsilon}.
\]
We next expand the first term on the right of \eqref{eq.K-resolvent-equation} after deleting an additional column $\ell\ne\gamma$. The Sherman Morrison formula yields
\[
 R_2=B_\ell-
       \frac{B_\ell v_\ell v_\ell^TB_\ell}
            {1+v_\ell^TB_\ell v_\ell},\qquad
 DH_\gamma^{(\gamma)}R_1
 =\sum_{\ell\ne\gamma}
       \frac{Dv_\ell v_\ell^TA_\ell}
            {1+v_\ell^TA_\ell v_\ell}.
\]
Consequently, we obtain
\begin{equation}\label{eq.K-column-expansion}
 \begin{aligned}
 z_1\E[W_\gamma K_\gamma[DR_1,R_2]]
 =\sum_{\ell\ne\gamma}
   \E\frac{W_\gamma U_\ell}{1+v_\ell^TA_\ell v_\ell}
   -\sum_{\ell\ne\gamma}
   \E\frac{W_\gamma V_\ell}
   {(1+v_\ell^TA_\ell v_\ell)(1+v_\ell^TB_\ell v_\ell)}
   +O(N^{7/6+C\epsilon}).
 \end{aligned}
\end{equation}

Lemma~\ref{lem.denominator-replacement} below allow us to replace the denominators used in \eqref{eq.K-column-expansion}. The proof of this lemma is given after the proof of Lemma~\ref{lem.weighted-cumulant}.
\begin{lemma}\label{lem.denominator-replacement}
Under the assumptions of Lemma~\ref{lem.weighted-cumulant},
\begin{equation}\label{eq.denominator-replacement}
 \begin{aligned}
 \sum_{\ell\ne\gamma}
   \E\frac{W_\gamma U_\ell}{1+v_\ell^TA_\ell v_\ell}
 &=d_0(z_1)^{-1}\sum_{\ell\ne\gamma}
                  \E[W_\gamma^{(\ell)}U_\ell]
   +O(N^{1+C\epsilon}),\\
 \sum_{\ell\ne\gamma}
   \E\frac{W_\gamma V_\ell}
   {(1+v_\ell^TA_\ell v_\ell)(1+v_\ell^TB_\ell v_\ell)}
 &=[d_0(z_1)d_0(z_2)]^{-1}\sum_{\ell\ne\gamma}
                  \E[W_\gamma^{(\ell)}V_\ell]
   +O(N^{7/6+C\epsilon}).
 \end{aligned}
\end{equation}
\end{lemma}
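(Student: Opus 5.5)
The plan is to handle each summand separately and then sum over the $N-1$ indices $\ell$. For each summand I make two replacements: the random denominators by their deterministic limits, and $W_\gamma$ by $W_\gamma^{(\ell)}$. The errors are controlled by H\"older's inequality, together with Lemma~\ref{lem.internal-column}, Lemma~\ref{lem:denominator} and the weight assumptions \eqref{eq.weight-assumptions}.

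For the first identity, write
\[
\frac{1}{1+v_\ell^TA_\ell v_\ell}=\frac1{d_0(z_1)}+\Bigl(\frac{1}{1+v_\ell^TA_\ell v_\ell}-\frac1{d_0(z_1)}\Bigr).
\]
The bracketed difference has every $L^p$ norm bounded by $C_pN^{-1/3+C\epsilon}$ by \eqref{eq:denominator-moments}. The resolvent $A_\ell$ is built from $X_\gamma^{(\gamma,\ell)}$, which is independent of $v_\ell$, so Lemma~\ref{lem:denominator} applies. Combining this with $\|U_\ell\|_p\le C_pN^{1/3+C\epsilon}$ and $\|W_\gamma\|_p\le C_pN^{C\epsilon}$, each error term is $O(N^{C\epsilon})$, and the sum over $\ell$ contributes $O(N^{1+C\epsilon})$.

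Next, I replace $W_\gamma$ by $W_\gamma^{(\ell)}$ in $d_0^{-1}\E[W_\gamma U_\ell]$. The crude bound $N^{-1/3}\cdot N^{1/3}=O(1)$ per summand is not enough, because it gives $O(N)$ only with an extra $N^{C\epsilon}$; that is still within $O(N^{1+C\epsilon})$, so it is acceptable. Even so, I would do it more carefully. Conditioning on everything except column $\ell$, the weight $W_\gamma^{(\ell)}$ is fixed, and the difference $(W_\gamma-W_\gamma^{(\ell)})U_\ell$ is bounded by H\"older: $\|W_\gamma-W_\gamma^{(\ell)}\|_2\|U_\ell\|_2\le CN^{C\epsilon}$. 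Summing gives $O(N^{1+C\epsilon})$, so the first identity follows.

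For the second identity the scale is larger, since $\|V_\ell\|_p\le C_pN^{1/2+C\epsilon}$. Using the same expansion for the product of the two denominators, the error from the denominators is $N^{-1/3}\cdot N^{1/2}=N^{1/6}$ per term, which sums to $O(N^{7/6+C\epsilon})$. The error from replacing $W_\gamma$ by $W_\gamma^{(\ell)}$ is of the same order, again by the weight assumption with rate $N^{-1/3+C_W\epsilon}$. Both errors fit the claimed bound.

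The main subtlety concerns the denominator estimate for the product form,
\[
\frac{1}{(1+v^TAv)(1+v^TBv)}-\frac{1}{d_0(z_1)d_0(z_2)}.
\]
This difference splits as a telescoping sum of two single-denominator differences, each multiplied by a bounded-moment factor. The required bounds follow from \eqref{eq:denominator-moments}, after checking that $d_0(z_2)$ is the right limit for the $B_\ell$ denominator. This holds because Lemma~\ref{lem:denominator} applies at $z_2$ as well, since $z_2$ lies in the domain \eqref{eq:domain1}.

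I expect no cancellation to be needed. Everything is a first-order replacement whose error matches the stated bound exactly, so the only real care is in keeping the dependence structure straight: $A_\ell$ and $B_\ell$ are independent of $v_\ell$, while $W_\gamma$ is not.
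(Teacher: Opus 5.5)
Your proposal is correct and is essentially the paper's proof. For each summand you replace $(1+v_\ell^TA_\ell v_\ell)^{-1}$ by $d_0(z_1)^{-1}$, and the product of the two denominators by $[d_0(z_1)d_0(z_2)]^{-1}$ via the same telescoping identity, using Lemma~\ref{lem:denominator}. You replace $W_\gamma$ by $W_\gamma^{(\ell)}$ using \eqref{eq.weight-assumptions}, and you close with H\"older against $\|U_\ell\|_p\le C_pN^{1/3+C\epsilon}$ or $\|V_\ell\|_p\le C_pN^{1/2+C\epsilon}$ before summing over $\ell$.

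The order of the two replacements differs from the paper's, which is immaterial. Your aside calling the crude per-summand bound ``not enough'' is muddled: that $O(N^{C\epsilon})$ bound per term is exactly what the paper uses.
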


Combining \eqref{eq.K-column-expansion} and
\eqref{eq.denominator-replacement}, we obtain
\begin{equation}\label{eq.K-column-expansion2}
 z_1\E[W_\gamma K_\gamma[DR_1,R_2]]
 =\frac{\sum_{\ell\ne\gamma}\E[W_\gamma^{(\ell)}U_\ell]}{d_0(z_1)}
 -\frac{\sum_{\ell\ne\gamma}\E[W_\gamma^{(\ell)}V_\ell]}{d_0(z_1)d_0(z_2)}
 +O(N^{7/6+C\epsilon}).
\end{equation}

We first estimate the sum involving $V_\ell$.
Since $W_\gamma^{(\ell)}$ is fixed under $\E_\ell$,
the tower property and Lemma~\ref{lem.internal-column}
give
\begin{align*}
 \left|
 \sum_{\ell\ne\gamma}
 \E[W_\gamma^{(\ell)}V_\ell]
 \right|
 =\left|
 \sum_{\ell\ne\gamma}
 \E[W_\gamma^{(\ell)}\E_\ell V_\ell]
 \right|
 \le\sum_{\ell\ne\gamma}
 \|W_\gamma^{(\ell)}\|_2
 \|\E_\ell V_\ell\|_2
 \le CN^{1+C\epsilon}.
\end{align*}
Thus \eqref{eq.K-column-expansion} can simplify to
\begin{equation}\label{eq.K-quadratic-equation}
 z_1\E[W_\gamma K_\gamma[DR_1,R_2]]
 =\frac1{Nd_0(z_1)}\sum_{\ell\ne\gamma}
   \E[W_\gamma^{(\ell)}K_\gamma[D\Sigma A_\ell,B_\ell]]
   +O(N^{7/6+C\epsilon}).
\end{equation}
Here we use the Lemma~\ref{lem.internal-column} to yield
\[
 \E[W_\gamma^{(\ell)}U_\ell]
 =N^{-1}\E[W_\gamma^{(\ell)}K_\gamma[D\Sigma A_\ell,B_\ell]].
\]

To obtain a self-consistent equation in \eqref{eq.K-quadratic-equation} , we now replace $W_\gamma^{(\ell)}$ by $W_\gamma$ and $A_\ell,B_\ell$ by $R_1,R_2$, respectively. Indeed, the weight assumption and \eqref{eq.K-HS-bound} give
\[
 \E[(W_\gamma^{(\ell)}-W_\gamma)
                      K_\gamma[D\Sigma A_\ell,B_\ell]]
 =O(N^{1+C\epsilon}).
\]
Moreover, the Sherman Morrison formula and Lemma~\ref{lem:res} imply
\[
 \bigl\|\|A_\ell-R_1\|_{\HS}\bigr\|_p
 +\bigl\|\|B_\ell-R_2\|_{\HS}\bigr\|_p
 \le C_pN^{1/3+C\epsilon}.
\]
Together with \eqref{eq.K-HS-bound}, this yields
\[
 \E[W_\gamma K_\gamma[D\Sigma A_\ell,B_\ell]]
 =\E[W_\gamma K_\gamma[D\Sigma R_1,R_2]]+O(N^{1+C\epsilon}).
\]
For fixed $\gamma$, the tensor $K_\gamma$ is independent of the summation index $\ell$. The common covariance $\Sigma/N$ supplies the common leading coefficient. Substituting these estimates into \eqref{eq.K-quadratic-equation} and absorbing the missing summand gives
\begin{equation}\label{eq.K-self-consistent}
 z_1\E[W_\gamma K_\gamma[DR_1,R_2]]
 =d_0(z_1)^{-1}\E[W_\gamma K_\gamma[D\Sigma R_1,R_2]]
   +O(N^{7/6+C\epsilon}).
\end{equation}
By the definition of $D$, we have
$z_1D-d_0(z_1)^{-1}D\Sigma=I_M$. Therefore
\eqref{eq.K-self-consistent} reduces to
\[
 \E[W_\gamma K_\gamma[R_1,R_2]]=O(N^{7/6+C\epsilon}),
\]
which proves \eqref{eq.weighted-K11}.
\end{proof}

\begin{proof}[Proof of Lemma~\ref{lem.denominator-replacement}]
Setting
\[
 d_{\ell,1}:=1+v_\ell^TA_\ell v_\ell,\qquad
 d_{\ell,2}:=1+v_\ell^TB_\ell v_\ell,\qquad
 \widetilde d_a:=d_0(z_a),\qquad
 a=1,2.
\]
Since $v_\ell$ is independent of $A_\ell$ and $B_\ell$,
Lemma~\ref{lem:denominator} gives, for every fixed $p\ge1$,
\[
 \|d_{\ell,a}^{-1}\|_p+|\widetilde d_a^{-1}|\le C_p,
 \qquad
 \|d_{\ell,a}^{-1}-\widetilde d_a^{-1}\|_p
 \le C_pN^{-1/3+C\epsilon}.
\]
Using the identity
\[
 \frac1{d_{\ell,1}d_{\ell,2}}
 -\frac1{\widetilde d_1\widetilde d_2}
 =\left(\frac1{d_{\ell,1}}-\frac1{\widetilde d_1}\right)
    \frac1{d_{\ell,2}}
 +\frac1{\widetilde d_1}
    \left(\frac1{d_{\ell,2}}-\frac1{\widetilde d_2}\right)
\]
and H\"older's inequality, we yields
\[
 \left\|
 \frac{1}{d_{\ell,1}d_{\ell,2}}
 -\frac{1}{\widetilde d_1\widetilde d_2}
 \right\|_p
 \le C_pN^{-1/3+C\epsilon}.
\]

Combining these estimates with \eqref{eq.weight-assumptions}, we get
\[
 \begin{aligned}
 \left\|\frac{W_\gamma}{d_{\ell,1}}
           -\frac{W_\gamma^{(\ell)}}{\widetilde d_1}\right\|_2
 &= \left\|  \frac{W_\gamma-W_\gamma^{(\ell)}}{d_{\ell,1}}
 +W_\gamma^{(\ell)}
 \left(\frac{1}{d_{\ell,1}}-\frac{1}{\widetilde d_1}\right) \right\|_2
 \le CN^{-1/3+C\epsilon},\\
 \left\|\frac{W_\gamma}{d_{\ell,1}d_{\ell,2}}
           -\frac{W_\gamma^{(\ell)}}{\widetilde d_1\widetilde d_2}
       \right\|_2
 &=\left\|  =
 \frac{W_\gamma-W_\gamma^{(\ell)}}
      {d_{\ell,1}d_{\ell,2}} +
 W_\gamma^{(\ell)}
 \left(
 \frac{1}{d_{\ell,1}d_{\ell,2}}
 -\frac{1}{\widetilde d_1\widetilde d_2}
 \right) \right\|_2 \le CN^{-1/3+C\epsilon}.
 \end{aligned}
\]
Finally, multiply the first bound by
$\|U_\ell\|_2\le CN^{1/3+C\epsilon}$ and the second by
$\|V_\ell\|_2\le CN^{1/2+C\epsilon}$, and sum over
$\ell\ne\gamma$. This proves \eqref{eq.denominator-replacement}.
\end{proof}

\appendix

\section*{AI use}
GPT-5.6 and GPT-6 were used at various stages of this work, including technical derivations, grammar corrections and proofreading. The authors have independently reviewed the AI-assisted content and take full responsibility for the accuracy and integrity of the paper.

\section{Proof of Lemma~\ref{lem:res}}
\label{app:basic scales}
\begin{proof}[Proof of Lemma~\ref{lem:res}]
We first prove \eqref{eq:schatten}. Using the 
local law and spectral rigidity estimate in Lemma~\ref{lem:interpolation-stability}, and the square root behavior of the deformed MP density at the regular
right edge, we split the sum at $j_0=\lceil N^{3\varepsilon}\rceil$ and obtain,
for every fixed $s\ge1$, with arbitrarily high probability,
\begin{align*}
\operatorname{Tr}|R|^s
=\sum_{j=1}^M|\lambda_j-z|^{-s}
\le C_s\left(
N^{3\varepsilon}|\eta|^{-s}
+N^{2s/3}\sum_{j=j_0}^{\lfloor cN\rfloor}j^{-2s/3}
+N
\right).
\end{align*}
Here the first term follows from $|\lambda_j-z|\ge|\eta|$, the second
from rigidity and interlacing, and the last accounts for eigenvalues
bounded away from the upper edge, including any zero eigenvalues.
Evaluating the sum proves \eqref{eq:schatten}, for sufficiently small
$\varepsilon>0$. The Hilbert--Schmidt bounds \eqref{eq.HS-bounds}
follow from
\[
\|R\|_{\mathrm{HS}}^2=\operatorname{Tr}|R|^2,\qquad
\|R^2\|_{\mathrm{HS}}^2=\operatorname{Tr}|R|^4.
\]
For deterministic matrices $B_0,\ldots,B_k$ with bounded operator
norms, the Schatten H\"older inequality also gives
\[
\|B_0RB_1\cdots RB_k\|_r
\le C\|R\|_{kr}^{k},\qquad r\ge1,
\]
where the norms in this display are Schatten norms.
This proves the assertion concerning bounded deterministic insertions.
In particular, it applies to the factors of $\Sigma$ occurring below.

Conditioning on $R$, Assumption~\ref{assu.1.1} gives, for every complex
symmetric matrix $A$ determined by $R$,
\[
\bigl\|x^TAx-N^{-1}\operatorname{Tr}(\Sigma A)\bigr\|_{p}
\le C_pN^{-1}\|\Sigma^{1/2}A\Sigma^{1/2}\|_{\mathrm{HS}}
\le C_pN^{-1}\|A\|_{\mathrm{HS}}.
\]
Taking $A=R$ and using \eqref{eq.HS-bounds} yields
\[
x^TRx-N^{-1}\operatorname{Tr}(\Sigma R)
\prec N^{-1/3+C\varepsilon}.
\]
For $k=2,3$, taking $A=R^k$ gives
\[
|x^TR^kx|
\prec N^{-1}\left(
\operatorname{Tr}|R|^k+
(\operatorname{Tr}|R|^{2k})^{1/2}
\right)
\prec N^{2k/3-1+C\varepsilon}.
\]
We used $|\operatorname{Tr}(\Sigma R^k)|\le
\|\Sigma\|_{\op}\operatorname{Tr}|R|^k$.
Similarly, $R^*R$ is real symmetric, and hence
\[
|Rx|^2=x^TR^*Rx
\prec N^{-1}\left(
\operatorname{Tr}|R|^2+
(\operatorname{Tr}|R|^4)^{1/2}
\right)
\prec N^{1/3+C\varepsilon}.
\]

For $p\ge2$, the bound for $\|x\|_p$ follows from
$\|\Sigma\|_{\op}\le\tau^{-1}$ and the moment bounds for the
coordinates of $q$. Indeed,
\[
\|x\|_p^2
\le\frac{\|\Sigma\|_{\op}}N
\left\|\sum_{i=1}^M q_i^2\right\|_{p/2}
\le\frac C N\sum_{i=1}^M\|q_i\|_p^2
\le C_p\frac MN
\le C_p.
\]
The case $1\le p<2$ follows by monotonicity of $L^p$ norms.
Finally, $\|R\|_{\op}\le|\eta|^{-1}$ is bounded by a fixed power
of $N$ on \eqref{eq:domain1}. The same is true of all the Schatten
norms above. Splitting into the events of arbitrarily high probability
used above and their complements, and using the moment bounds for
$|x|$ on the complements, converts each stochastic estimate to its
stated $L^p$ bound, after increasing $C$.
This proves \eqref{eq:primitive}.
\end{proof}

\section{Proof of Lemma~\ref{lem:denominator}}
\label{app:denominator}
\begin{proof}[Proof of Lemma~\ref{lem:denominator}]
We first prove \eqref{eq:trace-mp}.
By Lemma~\ref{lem:interpolation-stability}, Theorem~\ref{local MP law}
also applies to $X_\gamma$. Let $u_a$ be an orthonormal eigenbasis of
$\Sigma$, with corresponding eigenvalues $\sigma_a$.
The anisotropic local law gives
\[
\begin{aligned}
\left|\frac1N\operatorname{Tr}
\bigl(\Sigma(G_{1,\gamma}(z)-\Pi(z))\bigr)\right|
\le\frac1N\sum_{a=1}^M\sigma_a
\left|u_a^T(G_{1,\gamma}(z)-\Pi(z))u_a\right|
\prec
\sqrt{\frac{|\operatorname{Im}m_2(z)|}{N|\eta|}}
+\frac1{N|\eta|}.
\end{aligned}
\]
The square root behavior at the regular edge and \eqref{eq:domain1}
imply that the right side is at most $N^{-1/3+C\varepsilon}$.
Furthermore, $R-G_{1,\gamma}(z)$ has rank at most $K_0$ and
operator norm at most $2|\eta|^{-1}$. Therefore,
\[
\frac1N\left|\operatorname{Tr}
\bigl(\Sigma(R-G_{1,\gamma}(z))\bigr)\right|
\le\frac{2K_0\|\Sigma\|_{\op}}{N|\eta|}.
\]
Combining the preceding estimates proves the first line of
\eqref{eq:trace-mp}. In particular, no averaged local law for the
weighted trace is needed here.

The self-consistent equation for $m_2$ gives
\[
1+\frac1N\operatorname{Tr}(\Sigma\Pi(z))
=1-\frac1{Nz}\operatorname{Tr}
\frac{\Sigma}{I_M+m_2(z)\Sigma}
=-\frac1{z m_2(z)}.
\]
At the regular right edge, $z$, $m_2(z)$ and their reciprocals are
bounded. The regularity Assumption~\ref{assu.population} on $\Sigma$ also gives
$\min_a|1+\sigma_am_2(z)|\ge c>0$ in the domain
\eqref{eq:domain1}. Thus $|d_0(z)|\asymp1$ and
\[
\left(zI_M-\frac\Sigma{d_0(z)}\right)^{-1}
=\bigl[z(I_M+m_2(z)\Sigma)\bigr]^{-1}
=-\Pi(z),\qquad \|\Pi(z)\|_{\op}\le C.
\]
This completes the proof of \eqref{eq:trace-mp}.

We next prove \eqref{eq:denominator-moments}.
The weighted trace estimate just proved and Lemma~\ref{lem:res} imply
\[
\begin{aligned}
\left|N^{-1}\operatorname{Tr}(\Sigma R)-(d_0(z)-1)\right|
&\prec N^{-1/3+C\varepsilon},\\
|x^TRx-(d_0(z)-1)|
&\le|x^TRx-N^{-1}\operatorname{Tr}(\Sigma R)|
+\left|N^{-1}\operatorname{Tr}(\Sigma R)-(d_0(z)-1)\right|\\
&\prec N^{-1/3+C\varepsilon}.
\end{aligned}
\]
Since $|d_0(z)|\ge c>0$, for sufficiently small $\varepsilon>0$,
\[
|1+N^{-1}\operatorname{Tr}(\Sigma R)|\ge c/2,\qquad
|1+x^TRx|\ge c/2
\]
with arbitrarily high probability.

For every realization, the positive semidefiniteness of
$H_\gamma^{(\mathbb T)}$ and $\Sigma$ gives
\[
\left|(1+N^{-1}\operatorname{Tr}(\Sigma R))^{-1}\right|
\le\frac{|z|}{|\operatorname{Im}z|},\qquad
\left|(1+x^TRx)^{-1}\right|
\le\frac{|z|}{|\operatorname{Im}z|}.
\]
Indeed, in an orthonormal eigenbasis $v_j$ of
$H_\gamma^{(\mathbb T)}$, both $N^{-1}\operatorname{Tr}(\Sigma R)$
and $x^TRx$ have the form $\sum_j a_j(\lambda_j-z)^{-1}$ with
$a_j\ge0$ and $\lambda_j\ge0$. Taking imaginary parts gives
\[
\left|\operatorname{Im}
\left[z\left(1+\sum_j\frac{a_j}{\lambda_j-z}\right)\right]\right|
=|\eta|\left(1+\sum_j\frac{a_j\lambda_j}{|\lambda_j-z|^2}\right)
\ge|\eta|.
\]
Since $|z|/|\operatorname{Im}z|\le CN^{2/3+\varepsilon}$,
the contribution of the exceptional event is negligible. Therefore,
for every fixed $p\ge1$,
\[
\left\|(1+N^{-1}\operatorname{Tr}(\Sigma R))^{-1}\right\|_p
+\left\|(1+x^TRx)^{-1}\right\|_p\le C_p.
\]
Finally,
\[
\left|\frac1{1+x^TRx}-\frac1{d_0(z)}\right|
=\frac{|x^TRx-(d_0(z)-1)|}{|1+x^TRx|\,|d_0(z)|}
\prec N^{-1/3+C\varepsilon}.
\]
Using the deterministic bound
\[
\left|\frac1{1+x^TRx}-\frac1{d_0(z)}\right|
\le\frac{|z|}{|\operatorname{Im}z|}+C,
\]
we obtain the claimed $L^p$ estimate after increasing $C$.
This completes the proof.
\end{proof}

\section{Proof of \texorpdfstring{\eqref{eq.weighted-K21} and \eqref{eq.weighted-K22}}{the differentiated cumulant estimates}}
\label{app:cauchy-weighted-K}
To derive \eqref{eq.weighted-K21} , we differentiate \eqref{eq.weighted-K11} with respect to the first spectral parameter, keeping $W_\gamma$ and $R_2$ fixed. For this purpose, let $\Gamma_1:=\{w\in\mathbb C:|w-z_1|=\rho\}$,
with $\rho=\tfrac14\min\{|\Im z_1|,|\Im z_2|\}$, and set
\[
f(w):=\E W_\gamma K_\gamma\bigl[
G_{1,\gamma}^{(\gamma)}(w),R_2
\bigr],
\]
It can be shown that the proof of \eqref{eq.weighted-K11} remains valid uniformly on $\Gamma_1$. Hence, we have
\[
\sup_{w\in\Gamma_1}|f(w)|
\le C N^{7/6+C\epsilon}.
\]
Since $\frac{\partial}{\partial w}G_{1,\gamma}^{(\gamma)}(w)
=\left(G_{1,\gamma}^{(\gamma)}(w)\right)^2$, combing with Cauchy's integral formula,  we have
\begin{equation*}\label{eq:app-cauchy-first}
\begin{aligned}
\left|\mathbb E W_\gamma K_\gamma[R_1^2,R_2]\right|
&=
\left|f'(z_1)\right|
=
\left|
\frac{1}{2\pi\mathrm i}
\oint_{\Gamma_1}
\frac{f(w)}{(w-z_1)^2}\,\mathrm dw \right|
\le
\frac{1}{2\pi}
\frac{2\pi\rho}{\rho^2}
\sup_{w\in\Gamma_1}|f(w)| \\
&=
\rho^{-1}\sup_{w\in\Gamma_1}|f(w)|
\le
C\eta^{-1}N^{7/6+C\varepsilon}
\le
CN^{11/6+C\varepsilon},
\end{aligned}
\end{equation*}
The last inequality uses $\eta^{-1}=N^{2/3+\varepsilon}$.
This proves \eqref{eq.weighted-K21}. Applying the same argument to the second
spectral parameter introduces one additional factor of
$\rho^{-1}=O(N^{2/3+\epsilon})$, which yields \eqref{eq.weighted-K22}.

% Requires \usepackage{hyperref} in the preamble.

\end{document}